\newcommand{\subtitle}[1]{%
  \posttitle{%
    \par\end{center}
    \begin{center}\large#1\end{center}
    \vskip0.5em}%
}
\newcommand{\oset}[3][0ex]{%
  \mathrel{\mathop{#3}\limits^{
    \vbox to#1{\kern-2\ex@
    \hbox{$\scriptstyle#2$}\vss}}}}
\newtheorem{theorem}{Theorem}[section]
\newtheorem{proposition}{Proposition}[section]
\newtheorem{lemma}{Lemma}[section]
\theoremstyle{definition}
\newtheorem{definition}{Definition}[section]
\providecommand{\norm}[1]{\lVert#1\rVert}
\providecommand{\abs}[1]{\lvert#1\rvert}
\title{Extensions of Bundles of C*-algebras}
\author{Jeremy Steeger\\\texttt{jsteeger@uw.edu}}
\affil{{Department of Philosophy}\\{University of Washington}}
\author{Benjamin H.~Feintzeig\\ \texttt{bfeintze@uw.edu}}
\affil{{Department of Philosophy}\\{University of Washington}}
\date{\today}
\begin{document}

\maketitle

\begin{abstract}
Bundles of C*-algebras can be used to represent limits of physical theories whose algebraic structure depends on the value of a parameter.  The primary example is the $\hbar\to 0$ limit of the C*-algebras of physical quantities in quantum theories, represented in the framework of strict deformation quantization.  In this paper, we understand such limiting procedures in terms of the extension of a bundle of C*-algebras to some limiting value of a parameter.  We prove existence and uniqueness results for such extensions.  Moreover, we show that such extensions are functorial for the C*-product, dynamical automorphisms, and the Lie bracket (in the $\hbar\to 0$ case) on the fiber C*-algebras.
\end{abstract}
\tableofcontents
\newpage

\section{Introduction}
\label{sec:intro}

In this paper, we are interested in limiting procedures in which the algebraic structure of a theory varies depending on the numerical value of some parameter.  In many cases, the physical quantities of such a theory can be represented by a C*-algebra of quantities, and the corresponding limit can be represented by a bundle of C*-algebras \cite{Di77,KiWa95}.  The primary example of this is the formulation of the classical $\hbar\to 0$ limit of quantum theories in the framework of strict deformation quantization.  The general theory is presented in \cite{Ri89,Ri93,La98b,La06,La17}, and many examples are investigated in \cite{La93b,La93a,La98a,BiHoRi04b,HoRi05,HoRiSc08,BiGa15,vN19}.  Under modest conditions, a strict deformation quantization determines a continuous bundle of C*-algebras, with so-called equivalent quantizations determining the same continuous bundle.  Beyond the $\hbar\to 0$ limit, analogous structures have been used to represent scaling limits for renormalization in quantum field theory by \cite{BuVe95,BuVe98}, who employ particular algebras of sections of a bundle of C*-algebras that they call scaling algebras.  These structures are investigated further in \cite{Bu96,Bu96a}, and \cite{BuVe95} also make the suggestion that these tools may apply to the non-relativistic $c\to \infty$ limit of relativistic theories.  Thus, bundles of C*-algebras have important applications to  limits of physical theories.

However, in many cases the definition of the relevant bundle, and so the representation of the limiting procedure, is presented by stipulating the structure of the C*-algebra at the limiting value of the parameter.  For example, strict deformation quantizations are typically presented by \emph{starting} with a classical Poisson algebra densely embedded in a commutative C*-algebra at $\hbar = 0$.  One would like to know whether this limiting C*-algebra is somehow determined by the C*-algebras that form the rest of the bundle for $\hbar >0$, and similarly for the other examples.  We pose the associated question as concerning the existence and uniqueness of extensions of bundles to enlarged parameter spaces.  For example, for the $\hbar\to 0$ limit, one wants to know whether a bundle over the base space $(0,1]$ uniquely extends to a bundle over $[0,1]$, or equivalently whether there is a unique algebra at $\hbar = 0$ that can be continuously glued to the existing bundle.  We answer this question in the affirmative.

Furthermore, as \cite{La02,La02a} has conjectured that quantization is functorial, one can pose an analogous question for the inverse procedure of the classical limit, or for extensions of bundles of C*-algebras more generally.  
We use our existence and uniqueness results here to establish a number of senses in which limiting procedures represented by extensions of bundles of C*-algebras are functorial.  This holds for a variety of physically significant structures on the fiber C*-algebras, including the C*-product, dynamical automorphisms, and the Lie bracket (in the $\hbar\to 0$ case).  However, we note that our results here differ from the investigation of \cite{La02,La02a} because we focus on classes of morphisms that are no larger than the class of *-homomorphisms (i.e., morphisms that preserve at least C*-algebraic structure), whereas Landsman treats Hilbert bimodules as morphisms, which form a class larger than the *-homomorphisms (i.e., they preserve somewhat less structure).  Still, we believe the functoriality of quantization and of the classical limit to be closely connected, and we believe that further work may bring these investigations together.

We also mention in passing that the physical and philosophical significance of the limiting construction and its functoriality will be addressed in a companion philosophical paper, where we argue that the representation of the $\hbar\to 0$ limit with bundles of C*-algebras provides a kind of intertheoretic reduction.

The current paper proceeds as follows.  In \S\ref{sec:quant}, we motivate our investigation in more detail by reviewing how a strict deformation quantization determines a continuous bundle of C*-algebras.  This section also takes the opportunity to define relevant notions, and the associated \ref{app:quanttobun} shows that the determination of a bundle from a strict quantization is functorial.  In \S\ref{sec:uniform}, we provide a definition of a novel kind of bundle of C*-algebras, called a \emph{$\mathit{UC}_b$ bundle}, which we will use to establish our results concerning extensions.  We prove in the associated \ref{app:equivdefs} that a category of $\mathit{UC}_b$ bundles is equivalent to a category of continuous bundles as standardly defined by \cite{KiWa95}.  In \S\ref{sec:ext}, we prove that our $\mathit{UC}_b$ bundles have unique extensions to base spaces with limiting values of parameters.  Finally, \S\ref{sec:func} establishes that our constructions of bundle extensions and limiting C*-algebras are functorial with respect to C*-algebraic structure (\S\ref{sec:C*}), dynamical structure (\S\ref{sec:dyn}), and Lie bracket structure (\S\ref{sec:lie}).

\section{Motivation from Quantization}
\label{sec:quant}
We represent the physical quantities of a quantum system with elements of a C*-algebra.  A C*-algebra $\mathfrak{A}$ is an associative, involutive, complete normed algebra satisfying the C*-identity: $\norm{A^*A} = \norm{A}^2$ for all $A\in\mathfrak{A}$.  The canonical examples of C*-algebras are commutative algebras of bounded functions on a locally compact topological space and possibly non-commutative algebras of bounded operators on a Hilbert space.  We employ commutative algebras of functions on a classical phase space in classical physics and non-commutative algebras of operators satisfying a version of the canonical (anti-) commutation relations in quantum theories.  Thus, using C*-algebras provides a unified framework for investigating the relationship between classical and quantum theories.  For mathematical background, we refer the reader to \cite{Sa71,Di77,KaRi97}.  See \cite{BrRo87,BrRo96,Ha92} for the C*-algebraic approach to quantum physics.  With this background in place, we can use C*-algebras to analyze the classical limit.

A strict quantization provides extra structure to ``glue together'' a family of C*-algebras indexed by the parameter $\hbar$, as follows.

\begin{definition}[\cite{La98b}]
A \emph{strict quantization} of a Poisson algebra $(\mathcal{P},\{\cdot,\cdot\})$ consists in a locally compact topological space $I\subseteq \mathbb{R}$ containing $0$, a family of C*-algebras $(\mathfrak{A}_\hbar)_{\hbar\in I}$ and a family of linear \emph{quantization maps} $(\mathcal{Q}_\hbar: \mathcal{P}\to\mathfrak{A}_\hbar)_{\hbar\in I}$.  We require that $\mathcal{P}\subseteq\mathfrak{A}_0$, $\mathcal{Q}_0$ is the inclusion map, and for each $\hbar\in I$, $\mathcal{Q}_\hbar[\mathcal{P}]$ is norm dense in $\mathfrak{A}_\hbar$.  Further, we require that the following conditions hold for all $A,B\in \mathcal{P}$:
\begin{enumerate}[(i)]
    \item \emph{Von Neumann's condition.} $\lim_{\hbar\to 0}\norm{\mathcal{Q}_\hbar(A)\mathcal{Q}_\hbar(B) - \mathcal{Q}_\hbar(AB)}_\hbar = 0$;
    \item \emph{Dirac's condition.} $\lim_{\hbar\to 0}\norm{\frac{i}{\hbar}[\mathcal{Q}_\hbar(A),\mathcal{Q}_\hbar(B)] - \mathcal{Q}_\hbar(\{A,B\})}_\hbar = 0$;
    \item \emph{Rieffel's condition.} the map $\hbar\mapsto \norm{\mathcal{Q}_\hbar(A)}_\hbar$ is continuous.
    \end{enumerate}
    
\noindent A strict \emph{deformation} quantization is a strict quantization that satisfies the additional requirement that for each $\hbar\in I$, $\mathcal{Q}_\hbar [\mathcal{P}]$ is closed under multiplication and is nondegenerate, i.e., $\mathcal{Q}_\hbar (A) = 0$ if and only if $A=0.$
\end{definition}

\noindent \cite{Ri89,Ri93} uses this approach to define a ``deformed'' product on the classical algebra, which he uses in turn to specify the quantization conditions. We work in the opposite direction, defining the elements of the classical algebra using information away from $\hbar=0$.  We will mostly ignore the deformation condition, but still refer to strict deformation quantizations to distinguish them from other approaches like geometric quantization.

In a strict deformation quantization, one can understand the classical limit of a quantity $\mathcal{Q}_\hbar(A)$ in $\mathfrak{A}_\hbar$ in the quantum theory to be the classical quantity $A\in \mathcal{P}$.  Similarly, we can take classical limits of states by defining a \emph{continuous field of states} as a family of states $\omega_\hbar\in\mathcal{S}(\mathfrak{A}_\hbar)$ for each $\hbar\in I$ such that the map $\hbar\mapsto \omega_\hbar(\mathcal{Q}_\hbar(A))$ is continuous for each $A\in \mathcal{P}$.  In this case, the classical limit of such a continuous field of states is understood to be the classical state $\omega_0\in\mathcal{S}(\mathfrak{A}_0)$.  Thus, a strict deformation quantization provides enough structure to represent the classical limits of states and quantities in quantum theories. In fact, there is a close connection between the satisfaction of Rieffel's condition in a strict quantization and the existence of a continuous field of states converging to each classical state; see the proof of Theorem 4 in \cite[p. 33]{La93b} for more details. However, in the present paper, we will mostly ignore classical limits of states and restrict our focus kinematical quantities and their associated dynamics.  We leave a more thorough treatment of states for future work.

In general, there may be different strict deformation quantizations of the same Poisson manifold. If two strict quantizations ${\mathcal{Q}}_\hbar$ and  ${\mathcal{Q}}_\hbar'$ of a given Poisson algebra $\mathcal{P}$ employ the same family of C*-algebras, but possibly differ in their quantization maps, then the quantizations are called \emph{equivalent} just in case
\begin{align}\label{eq:quant_equiv}
    \lim_{\hbar\to 0}\left\Vert{\mathcal{Q}}_\hbar(A) - {\mathcal{Q}}_\hbar ' (A) \right\Vert_\hbar = 0
\end{align}
for all $A\in \mathcal{P}$.  Equivalent quantizations share the same behavior in the limit as $\hbar\to 0$.  One can encode this common behavior in a further object, variously called a continuous bundle or field of C*-algebras, which itself has enough structure to understand the classical limits of quantities and states.  We consider an existing definition in the literature before providing an alternative{\textemdash}and in a sense, \emph{equivalent}{\textemdash}definition that will be more useful for our purposes.

\begin{definition}[\cite{KiWa95}]
\label{def:vbun}
A \emph{$C_0$ bundle of C*-algebras} over a topological space $I$ is a family of C*-algebras $(\mathfrak{A}_\hbar)_{\hbar\in I}$, a C*-algebra $\mathfrak{A}^0$ called the collection of \emph{vanishingly continuous sections}, and a family of *-homomorphisms $(\phi_\hbar: \mathfrak{A}^0\to \mathfrak{A}_\hbar)_{\hbar\in I}$ called \emph{evaluation maps}, which we require to satisfy the following conditions:
\begin{enumerate}[(i)]
\item \emph{Fullness.} Each evaluation map $\phi_\hbar$ is surjective and the norm of each $a\in \mathfrak{A}^0$ is given by $\norm{a} = \sup_{\hbar\in I}\norm{\phi_\hbar(a)}_\hbar$.
\item \emph{Vanishing completeness.} For each continuous function vanishing at infinity $f\in C_0(I)$ and $a\in\mathfrak{A}^0$, there is an element $fa\in\mathfrak{A}^0$ such that $\phi_\hbar(fa) = f(\hbar)\phi_\hbar(a)$.
\item \emph{Vanishing continuity.} For each $a\in\mathfrak{A}^0$, the function $N_a:\hbar\mapsto \norm{\phi_\hbar(a)}_\hbar$ is in $C_0(I)$.
\end{enumerate}
\end{definition}

\noindent Other authors typically call these structures simply ``continuous bundles of C*-algebras.''  We instead use the modifier ``$C_0$" to emphasize the use of $C_0(I)$, the continuous functions vanishing at infinity. This will help to distinguish these structures from the ones we define next. We will often think of the C*-algebras $\mathfrak{A}_\hbar$ as fibers above the values $\hbar\in I$, hence forming a bundle structure over the base space $I$.  A $C_0$ bundle of C*-algebras determines the continuity structure of the bundle by specifying the collection $\mathfrak{A}^0$ of vanishingly continuous sections through the bundle.

By a theorem of Landsman \cite[Theorem 1.2.4, p. 111]{La98b}, given a strict quantization, there is a unique $C_0$ bundle of C*-algebras containing among its sections the curves traced out by the quantization maps as $\hbar$ varies. More formally: given a strict quantization $((\mathfrak{A}_\hbar,\mathcal{Q}_\hbar)_{\hbar\in I}, \mathcal{P})$, under modest conditions there is a unique $C_0$ bundle of C*-algebras $((\mathfrak{A}_\hbar,\phi_\hbar)_{\hbar\in I},\mathfrak{A}^0)$ such that for each $A\in \mathcal{P}$, there is a continuous section $a\in\mathfrak{A}^0$ with $\phi_\hbar(a) = \mathcal{Q}_\hbar(A)$ for all $\hbar\in I$. We can thus speak of the $C_0$ bundle generated by a strict quantization. Moreover, Landsman's theorem shows equivalent quantizations generate the same bundle.  In this sense, the bundles encode an invariant structure among different quantization maps capturing the same behavior in the $\hbar\to 0$ limit.  We analyze in further detail the construction of a $C_0$ bundle of C*-algebras from a strict deformation quantization in \ref{app:quanttobun}, where we show this construction is functorial.

\section{$\mathit{UC}_b$ Bundles}
\label{sec:uniform}

The association of a C*-algebra of vanishingly continuous sections in this definition allows one to use many familiar tools to analyze such bundles.  Still, there are some drawbacks to $C_0$ bundles for our purposes.  In the next section, we will formulate our central question about the determination of the classical limit, or limits of algebraic physical theories more generally, as follows.  Suppose one knows the quantum kinematics for $\hbar>0$ but has no knowledge of the corresponding classical kinematics, i.e., suppose one has a bundle of C*-algebras $((\mathfrak{A}_\hbar,\phi_\hbar)_{\hbar\in (0,1]},\mathfrak{A}^0)$ over the base space consisting only of parameter values $\hbar>0$.  Under what conditions is there a unique algebra $\mathfrak{A}_0$ that, when appropriately glued to the given bundle of algebras, provides an extended continuous bundle?

One would like to construct the sections of an extended bundle over $[0,1]$ by continuously extending sections of the restricted bundle over $(0,1]$ to the point $\hbar = 0$ in the base space. But if one begins with a $C_0$ bundle of C*-algebras over $(0,1]$, then the condition of vanishing continuity implies that all of the vanishingly continuous sections necessarily tend, as $\hbar\to 0$, toward the $0$ element of any C*-algebra one tries to glue on. Thus, this strategy can only lead to trivial limits. In other words, one cannot, in this way, directly recover non-trivial information about the corresponding classical algebra at $\hbar = 0$.

Notice, however, that we can make a slight change to the algebra of sections in our definition of bundles to deal with this issue. Consider a simplified toy example. Although every function in $C_0((0,1])$ will have the same (trivial) limit as $\hbar\to 0$, other algebras avoid this pathology. One clear candidate is the familiar algebra of bounded, continuous functions on the interval, as functions in $C_b ((0,1])$ can have non-trivial limits at $\hbar = 0$. But functions in this algebra might also fail to have limits altogether, as demonstrated by the topologist's sine curve. To uniquely extend such a function, one might try to appeal to the embedding $i$ of $(0,1]$ in its Stone-\v{C}ech compactification $\beta (0,1] $. The topologist's sine $t$ can be viewed as a function from $(0,1]$ to the compact space $[0,1]$, and by the universal property of $i$, $t$ extends uniquely to a continuous map $\beta t$ from $\beta (0,1] $ to $[0,1]$. But it is at least unclear that this extension is one of physical interest.

A more natural candidate is the collection $\mathit{UC}_b((0,1])$ of uniformly continuous and bounded functions, which is also a C*-algebra \cite[Lemma 3.10, p. 77]{AlBo99}. In fact, it is a C*-subalgebra of $C_b ((0,1])$. Moreover, each function $f\in \mathit{UC}_b((0,1])$ has a unique, uniformly continuous extension to $[0,1]$, and the value at the limit may be non-zero. More generally, for arbitrary metric spaces $(I,d)$, each $f\in \mathit{UC}_b(I)$ can be uniquely extended to the completion of $I$ \cite[Lemma 3.11, p. 77]{AlBo99}. These considerations suggest constructing bundles of C*-algebras whose sections are uniformly continuous and bounded.

As an aside, it would be interesting to know more about the relationship between $\mathit{UC}_b(I)$ and the more familiar commutative C*-algebras $C_0(I)$ and $C_b(I)$. For example, we lack a simple characterization of the first algebra's spectrum. Based on \cite{LacruzLlavona1997}, we conjecture that the spectrum of $\mathit{UC}_b(I)$ is some quotient of that of $C_b(I)$---that is, some quotient of the Stone-\v{C}ech compactification of $I$. We leave a more detailed investigation of this question for future work (and we thank an anonymous referee for raising it).

In sum, we use the algebra $\mathit{UC}_b(I)$ in an attempt to preserve the virtue of having a C*-algebra of sections while also gaining some control over non-trivial limits. We propose the following definition for the task.

\begin{definition}
\label{def:ubun}
A \textit{$\mathit{UC}_b$ bundle of C*-algebras} over a metric space $(I,d)$ is a family of C*-algebras $(\mathfrak{A}_\hbar)_{\hbar\in I}$, a C*-algebra $\mathfrak{A}$ called the collection of \textit{uniformly continuous sections}, and a family of *-homomorphisms $(\phi_\hbar: \mathfrak{A}\to\mathfrak{A}_\hbar)_{\hbar\in I}$ called \textit{evaluation maps}, which we require to satisfy the following conditions:
\begin{enumerate}[(i)]
\item \emph{Fullness.} Each evaluation map $\phi_\hbar$ is surjective and the norm of each $a\in \mathfrak{A}$ is given by $\norm{a} = \sup_{\hbar\in I}\norm{\phi_\hbar(a)}_\hbar$.
\item \emph{Uniform completeness.} For each $f\in \mathit{UC}_b(I)$ and $a\in\mathfrak{A}$, there is an element $fa\in\mathfrak{A}$ such that $\phi_\hbar(fa) = f(\hbar)\phi_\hbar(a)$.
\item \emph{Uniform continuity.} For each $a\in\mathfrak{A}$, the function $N_a: \hbar\mapsto \norm{\phi_\hbar(a)}_\hbar$ is in $\mathit{UC}_b(I)$.
\end{enumerate}
\end{definition}
\noindent Notice that the only difference between $\mathit{UC}_b$ and $C_0$ bundles of C*-algebras is the use of $C_0(I)$ or $UC_b(I)$ in conditions (ii) and (iii).  In general, we will restrict our attention to both $\mathit{UC}_b$ and $C_0$ bundles of C*-algebras whose base space is a locally compact metric space.

We now formulate two relevant categories of bundles of C*-algebras that we will use in what follows.  To that end, we define a notion of morphism, or structure-preserving map, between bundles.  Recall that a metric map $\alpha:I\to J$ is one satisfying $d_J(\alpha(x),\alpha(y)) \leq d_I(x,y)$ for all $x,y\in I$, where $d_I$ and $d_J$ are the metrics on $I$ and $J$, respectively.

\begin{definition}
A \emph{homomorphism $\sigma: \mathcal{A}_I\to\mathcal{B}_J$ between (vanishingly or uniformly) continuous bundles of C*-algebras} $\mathcal{A}_I \!= \!((\mathfrak{A}_\hbar,\phi^I_\hbar)_{\hbar\in I},\mathfrak{A})$ and $\mathcal{B}_J \! =\! ((\mathfrak{B}_\hbar,\phi^J_\hbar)_{\hbar\in J},\mathfrak{B})$ is a pair of maps
\begin{equation}\label{eq:bundle_homomorphism}
    \sigma = (\alpha,\beta),
\end{equation}
where $\alpha: I\to J$ is a metric map, $\beta: \mathfrak{A}\to\mathfrak{B}$ is a *-homomorphism.  We further require the following condition of compatibility between $\alpha$ and $\beta$: for all $a_1,a_2\in \mathfrak{A}$ and $\hbar\in I$
\[\text{if $\phi^I_\hbar(a_1) = \phi^I_\hbar(a_2)$, then $\phi^J_{\alpha(\hbar)}(\beta(a_1)) = \phi^J_{\alpha(\hbar)}(\beta(a_2))$}.\]
A homomorphism $\sigma = (\alpha,\beta)$ is an \emph{isomorphism} if $\alpha$ is an isometry and $\beta$ is a *-isomorphism.
\end{definition}
\noindent The condition of compatibility between $\alpha$ and $\beta$ ensures that $\beta$ preserves fibers in the sense that it defines a *-homorphism from the fiber $\mathfrak{A}_\hbar$ to the fiber $\mathfrak{B}_{\alpha(\hbar)}$ (See Lemma \ref{lem:fibmorph} in \S\ref{sec:ext}).

We now define two categories of bundles of C*-algebras.  Throughout, we will restrict attention to the case where the base space $I$ of any of the structures considered is a locally compact metric space.

\begin{definition}
The category $\mathbf{VBunC^*Alg}$ consists in:
\begin{itemize}
    \item \textit{objects}: $C_0$ bundles of C*-algebras whose base space is a locally compact metric space,
    \[\mathcal{A}^V_I = \left( \left(\mathfrak{A}_\hbar,\phi^I_\hbar\right)_{\hbar\in I},\mathfrak{A}^0\right);\]
    \item \textit{morphisms}: bundle homomorphisms.
\end{itemize}
\end{definition}

\begin{definition}
The category $\mathbf{UBunC^*Alg}$ consists in:
\begin{itemize}
    \item \textit{objects}: $\mathit{UC}_b$ bundles of C*-algebras whose base space is a locally compact metric space,
    \[\mathcal{A}^U_I = \left ( \left(\mathfrak{A}_\hbar,\phi^I_\hbar\right)_{\hbar\in I},\mathfrak{A} \right);\]
    \item \textit{morphisms}: bundle homomorphisms.
\end{itemize}
\end{definition}
The category $\mathbf{VBunC^*Alg}$ encodes the structures picked out by the definition of $C_0$ bundles of C*-algebras.  On the other hand, the category $\mathbf{UBunC^*Alg}$ encodes the structures picked out by the definition of $\mathit{UC}_b$ bundles of C*-algebras.  There is a sense in which our proposed definition of $\mathit{UC}_b$ bundles is equivalent to the previous definition of $C_0$ bundles, i.e., one can construct a unique $\mathit{UC}_b$ bundle from each $C_0$ bundle, and vice versa.  In \ref{app:equivdefs}, we make this precise by establishing a categorical equivalence between $\mathbf{VBunC^*Alg}$ and $\mathbf{UBunC^*Alg}$.  We take this equivalence to justify our use of $\mathit{UC}_b$ bundles of C*-algebras in what follows.

Next, we will use $\mathit{UC}_b$ bundles of C*-algebras to analyze the existence and uniqueness of limits of families of C*-algebras.  

\section{Existence and Uniqueness of Extensions}
\label{sec:ext}

We want to consider the case where we begin with only information about a quantum theory at $\hbar>0$ and no information about the corresponding classical limit at $\hbar = 0$, or the analogous situation for limits of other parameters.

In other words, suppose we are given only the restriction of our $\mathit{UC}_b$ bundle of C*-algebras to a bundle over the base space given by the half open interval $(0,1]$.  In general, one can define the \emph{canonical restriction} of a bundle $\mathcal{A}_J = ((\mathfrak{A}_\hbar,\phi^J_\hbar)_{\hbar\in J}, \mathfrak{A})$ over a base space given by a metric space $J$.  If $I$ is further a locally compact metric space and $\alpha: I\to J$ is an isometric embedding, we will say that the canonical restriction of $\mathcal{A}_J$ from $J$ to $I$ along $\alpha$ is given by
\begin{gather}\label{eq:canonical_restriction}
    \begin{aligned}
        \mathcal{A}_{J|I} &:= \left (\left(\mathfrak{A}_\hbar,\phi^I_\hbar \right)_{\hbar\in I},\mathfrak{A}_{|I} \right ), \quad \text{where}\\ 
        &\mathfrak{A}_{|I} := \left \{\gamma\in\prod_{\hbar\in I}\mathfrak{A}_\hbar\ \middle| \ \text{for some $a\in\mathfrak{A}$, } \gamma(\hbar) = \phi_{\alpha(\hbar)}^J(a)\text{ for all $\hbar\in I$}\right\},\quad\text{and} \\
        &\phi^I_\hbar(\gamma) := \gamma(\hbar) \quad \text{ for all $\gamma\in\mathfrak{A}_{|I}$ and $\hbar\in I$}.
\end{aligned}
\end{gather}
This restriction removes the fibers outside of $\alpha[I]$ and truncates continuous sections from $J$ to $\alpha[I]\subseteq J$. One can check that this is indeed a $\mathit{UC}_b$ bundle in its own right. In particular, for $I = (0,1]$ and $J=[0,1]$, one can use the natural inclusion map $\alpha: (0,1]\to [0,1]$ to define the restricted bundle $\mathcal{A}_{J|I}$ resulting from a strict deformation quantization as above. Such a restricted bundle represents only the information in the quantum theory for $\hbar>0$ without the corresponding classical theory at $\hbar = 0$.

Given such a restriction, our questions are: can one reconstruct the C*-algebra of classical quantities $\mathfrak{A}_0$ from this restricted continuous bundle? And can one continuously glue $\mathfrak{A}_0$ to the restricted bundle in a way that recovers the original information about the $\hbar\to 0$ limit? We answer both questions in the affirmative. The result is a two-step procedure for constructing the C*-algebra at $\hbar = 0$, which we call ``extension-and-restriction.'' Starting with a bundle containing only information for $\hbar > 0$, we (uniquely) extend the bundle to one containing information at the accumulation point $\hbar = 0$; then we (uniquely) restrict this new bundle to the fiber algebra $\hbar = 0$ to exactly recover the classical theory.

We will prove our results in full generality for the case where our base space $I$ is an arbitrary locally compact metric space.  Our general result then applies immediately to the case where the base space is either $I = (0,1]$ or $I = \{1/N\ | N\in\mathbb{N}\}$, which are the most typical base spaces used in analyzing limits of quantum theories.  As such, we define a general notion of extension.
\begin{definition}
\label{def:ext}
Let $\mathcal{A}_I = ((\mathfrak{A}_\hbar,\phi^I_\hbar)_{\hbar\in I},\mathfrak{A})$ and $\mathcal{B}_J = ((\mathfrak{B}_\hbar,\phi^J_\hbar)_{\hbar\in J},\mathfrak{B})$ be $\mathit{UC}_b$ bundles of C*-algebras over locally compact metric spaces $I$ and $J$, respectively.
\begin{itemize}
    \item $\mathcal{B}_J$ is an \textit{extension} of $\mathcal{A}_I$ if there is a monomorphism of continuous bundles of C*-algebras $\sigma= (\alpha,\beta): \mathcal{A}_I\to \mathcal{B}_J$, i.e., a homomorphism where $\alpha$ and $\beta$ are both injective.
    \item $\mathcal{B}_J$ is a \textit{minimal extension} of $\mathcal{A}_I$ if, moreover, $\alpha$ and $\beta$ are both dense embeddings.
\end{itemize}
In either case, we say that the extension $\mathcal{B}_J$ is associated with $\alpha$ via $\sigma$.
\end{definition}

\noindent We use the term ``dense embedding'' above to mean something slightly different for $\alpha$ and for $\beta$. For $\alpha$ to be a dense embedding, it must be an isometric isomorphism between its domain and its image (equipped with the subspace topology) and its image must be dense in $J$ (equipped with the metric topology). Sometimes we will call $\alpha$ a ``dense, isometric embedding" for emphasis; although, note that we do not require $\alpha$ to be bijective.  For $\beta$ to be a dense embedding, nothing more is required other than that its image must be dense in $\mathfrak{B}$ (according to the algebra's norm){\textemdash}this condition, in conjunction with injectivity, makes $\beta$ a *-isomorphism \cite[p. 243]{KaRi97}.

We are especially interested in minimal extensions, which capture limits of physical theories for accumulation points in the parameter space (such as $\hbar=0$). One can show that a minimal extension is guaranteed to exist for any accumulation point of interest.
\begin{theorem}
\label{thm:ex}
Let $\mathcal{A}_I = ((\mathfrak{A}_\hbar,\phi^I_\hbar)_{\hbar\in I},\mathfrak{A})$ be a $\mathit{UC}_b$ bundle of C*-algebras over a locally compact metric space $I$.  Suppose $\alpha: I\to J$ is a dense, isometric embedding.  Then there exists a minimal extension $\tilde{\mathcal{A}}_J$ of $\mathcal{A}_I$ associated with $\alpha$.
\end{theorem}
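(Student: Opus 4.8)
The plan is to take $\mathfrak{A}$ itself as the section algebra of the extension, re-fibered over $J$, since a uniformly continuous section over $I$ ought to have a unique ``value'' at each point of the dense superspace $J$. Throughout I identify $I$ with $\alpha[I]\subseteq J$ via the isometry $\alpha$. First I would build the new fibers. For each $a\in\mathfrak{A}$ the norm function $N_a = N^I_a$ lies in $\mathit{UC}_b(I)$ by uniform continuity of $\mathcal{A}_I$, and since $\alpha$ is a dense isometric embedding it extends uniquely to a function $\bar{N}_a\in\mathit{UC}_b(J)$ (the standard extension of bounded uniformly continuous functions along a dense inclusion, cf.\ \cite[Lemma 3.11]{AlBo99}; equivalently $\bar{N}_a(j) = \lim_n N_a(\hbar_n)$ for any sequence $\hbar_n\in I$ with $\alpha(\hbar_n)\to j$, the limit being independent of the sequence). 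For fixed $j\in J$ set $\|a\|_j := \bar{N}_a(j)$. Passing to the limit in the C*-seminorm inequalities in the $\mathfrak{A}_{\hbar_n}$ shows that $\|\cdot\|_j$ is a C*-seminorm on $\mathfrak{A}$ with $\|\cdot\|_j\le\|\cdot\|$, so its null space $N_j := \{a\in\mathfrak{A} : \|a\|_j = 0\}$ is a closed two-sided ideal; I then define $\mathfrak{B}_j := \mathfrak{A}/N_j$ with the quotient norm and $\phi^J_j : \mathfrak{A}\to\mathfrak{B}_j$ the quotient map.

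By the standard quotient theorem for C*-algebras, $\mathfrak{B}_j$ is a C*-algebra and $\phi^J_j$ is surjective; moreover a short argument (the completion of $\mathfrak{A}/N_j$ under the norm induced by $\|\cdot\|_j$ is a C*-algebra in which $\mathfrak{A}/N_j$ sits as a dense $\ast$-subalgebra, which is also closed, being the image of a $\ast$-homomorphism of C*-algebras, cf.\ \cite[p.~243]{KaRi97}) shows $\|\phi^J_j(a)\|_{\mathfrak{B}_j} = \|a\|_j = \bar{N}_a(j)$. For $j = \alpha(\hbar)\in\alpha[I]$ one has $\|a\|_{\alpha(\hbar)} = N_a(\hbar)$, hence $N_{\alpha(\hbar)} = \ker\phi^I_\hbar$ and $\mathfrak{B}_{\alpha(\hbar)}\cong\mathfrak{A}_\hbar$ canonically, with $\phi^J_{\alpha(\hbar)}$ corresponding to $\phi^I_\hbar$. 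I then set $\tilde{\mathcal{A}}_J := ((\mathfrak{B}_j,\phi^J_j)_{j\in J},\mathfrak{A})$ and verify Definition~\ref{def:ubun}. Fullness: surjectivity is noted, and $\sup_{j\in J}\|\phi^J_j(a)\|_j = \sup_{j\in J}\bar{N}_a(j) = \sup_{j\in\alpha[I]}\bar{N}_a(j) = \sup_{\hbar\in I}N_a(\hbar) = \|a\|$ by continuity of $\bar{N}_a$, density of $\alpha[I]$, and fullness of $\mathcal{A}_I$. Uniform continuity: $N^J_a = \bar{N}_a\in\mathit{UC}_b(J)$ by construction.

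Uniform completeness is the one point requiring real work. Given $f\in\mathit{UC}_b(J)$ and $a\in\mathfrak{A}$, I claim $fa := (f\circ\alpha)a$ works, where $(f\circ\alpha)a\in\mathfrak{A}$ exists by uniform completeness of $\mathcal{A}_I$ since $f\circ\alpha\in\mathit{UC}_b(I)$. On $\alpha[I]$ the identity $\phi^J_j(fa) = f(j)\phi^J_j(a)$ is immediate; for general $j\in J$ one computes $N^I_{(f\circ\alpha)a - f(j)a} = |f\circ\alpha - f(j)|\cdot N_a$, whose $\mathit{UC}_b(J)$-extension is $|f - f(j)|\cdot\bar{N}_a$ by uniqueness of extensions, and evaluating at $j$ gives $\|(f\circ\alpha)a - f(j)a\|_j = 0$, i.e.\ $\phi^J_j(fa) = f(j)\phi^J_j(a)$. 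Finally I would verify that $\sigma := (\alpha,\mathrm{id}_\mathfrak{A}) : \mathcal{A}_I\to\tilde{\mathcal{A}}_J$ is a bundle homomorphism: $\alpha$ is a metric map, $\mathrm{id}_\mathfrak{A}$ is a $\ast$-homomorphism, and compatibility holds because $\phi^J_{\alpha(\hbar)}$ corresponds to $\phi^I_\hbar$ under $\mathfrak{B}_{\alpha(\hbar)}\cong\mathfrak{A}_\hbar$. Both components are injective, so $\sigma$ is a monomorphism and $\tilde{\mathcal{A}}_J$ an extension; $\alpha$ is a dense isometric embedding by hypothesis and $\mathrm{id}_\mathfrak{A}$ has (trivially) dense image, so $\tilde{\mathcal{A}}_J$ is in fact a minimal extension associated with $\alpha$.

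The main obstacle I anticipate is the verification of uniform completeness at the new points $j\in J\setminus\alpha[I]$, together with the more basic point of pinning down that the quotient norm on $\mathfrak{B}_j$ coincides with the pointwise seminorm value $\bar{N}_a(j)$ — this identification is precisely what forces the norm functions $N^J_a$ to be uniformly continuous on all of $J$, and hence what makes $\tilde{\mathcal{A}}_J$ a genuine $\mathit{UC}_b$ bundle rather than merely a family of fibers. Everything else reduces to transporting the axioms of $\mathcal{A}_I$ across the isometry $\alpha$ and the canonical isomorphism $\mathit{UC}_b(I)\cong\mathit{UC}_b(J)$ it induces.
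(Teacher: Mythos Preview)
Your proposal is correct and follows essentially the same route as the paper: construct the new fibers as quotients $\mathfrak{A}/N_j$ by the ideal of sections whose norm tends to zero along nets approaching $j$ (your $N_j$ is exactly the paper's $\mathcal{K}_j$), keep $\mathfrak{A}$ itself as the section algebra of the extension, and take $\sigma=(\alpha,\mathrm{id}_{\mathfrak{A}})$ as the required monomorphism. If anything, you are more explicit than the paper on two points it dispatches in a single line each---the identification of the quotient norm on $\mathfrak{B}_j$ with the extended value $\bar N_a(j)$, and the verification of uniform completeness at the genuinely new points $j\in J\setminus\alpha[I]$.
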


\noindent We proceed to prove Theorem \ref{thm:ex} through a series of lemmas.

\begin{lemma}\label{lem:quotient_1}
Suppose $\mathcal{A}_I$ and $\alpha:I\to J$ are as in Theorem 1.  Given any $j\in \overline{\alpha[I]}$, the set
\[\mathcal{K}_j := \left\{a\in\mathfrak{A}\ \middle|\ \lim_{\delta}\left\Vert\phi^I_{i_\delta}(a)\right\Vert_{i_\delta} = 0 \text{ for any net } \{i_\delta\}\text{ with } \alpha(i_\delta)\to j \right\}\]
is a closed two-sided ideal in $\mathfrak{A}$.
\end{lemma}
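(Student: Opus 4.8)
The plan is to verify directly that $\mathcal{K}_j$ is a linear subspace, that it is closed under left and right multiplication by arbitrary elements of $\mathfrak{A}$, and that it is norm-closed. First I would establish a convenient reformulation: for a fixed $a\in\mathfrak{A}$, set $N_a(\hbar) = \norm{\phi^I_\hbar(a)}_\hbar$, which by the Uniform continuity axiom lies in $\mathit{UC}_b(I)$. Because $\alpha$ is a dense isometric embedding, $N_a$ can be transported to a uniformly continuous bounded function $\tilde N_a$ on $\alpha[I]$ and then (by the extension property of uniformly continuous functions cited in the paper, \cite[Lemma 3.11]{AlBo99}) uniquely extended to the closure $\overline{\alpha[I]}$. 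The condition defining $\mathcal{K}_j$ — that $\norm{\phi^I_{i_\delta}(a)}_{i_\delta}\to 0$ along every net with $\alpha(i_\delta)\to j$ — is then exactly the condition that the unique continuous extension $\tilde N_a$ vanishes at $j$. This reformulation does two things at once: it shows the net-independence is automatic (the limit exists and equals $\tilde N_a(j)$ regardless of the net), and it recasts membership in $\mathcal{K}_j$ as a single scalar condition $\tilde N_a(j) = 0$.

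With this in hand, the algebraic closure properties follow from standard C*-inequalities together with the uniform-continuity bookkeeping. For linearity: if $a,b\in\mathcal{K}_j$ and $\lambda\in\mathbb{C}$, then $\norm{\phi^I_\hbar(a+\lambda b)}_\hbar \le \norm{\phi^I_\hbar(a)}_\hbar + \abs{\lambda}\,\norm{\phi^I_\hbar(b)}_\hbar$ since each $\phi^I_\hbar$ is a $*$-homomorphism, so the left side tends to $0$ along any net with $\alpha(i_\delta)\to j$. For the ideal property: given $a\in\mathcal{K}_j$ and $c\in\mathfrak{A}$, we have $\norm{\phi^I_\hbar(ca)}_\hbar = \norm{\phi^I_\hbar(c)\phi^I_\hbar(a)}_\hbar \le \norm{\phi^I_\hbar(c)}_\hbar\,\norm{\phi^I_\hbar(a)}_\hbar \le \norm{c}\,\norm{\phi^I_\hbar(a)}_\hbar$, using Fullness to bound $\norm{\phi^I_\hbar(c)}_\hbar$ by $\norm{c}$; the right side tends to $0$, so $ca\in\mathcal{K}_j$, and symmetrically $ac\in\mathcal{K}_j$. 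For norm-closedness: if $a_n\to a$ in $\mathfrak{A}$ with each $a_n\in\mathcal{K}_j$, then for any net with $\alpha(i_\delta)\to j$, $\norm{\phi^I_{i_\delta}(a)}_{i_\delta}\le \norm{\phi^I_{i_\delta}(a-a_n)}_{i_\delta} + \norm{\phi^I_{i_\delta}(a_n)}_{i_\delta} \le \norm{a-a_n} + \norm{\phi^I_{i_\delta}(a_n)}_{i_\delta}$; taking $\limsup$ over $\delta$ kills the second term for each $n$, leaving $\limsup_\delta \norm{\phi^I_{i_\delta}(a)}_{i_\delta}\le \norm{a-a_n}$, and then letting $n\to\infty$ gives the result. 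Alternatively, all three follow instantly from the reformulation once one notes that $a\mapsto \tilde N_a(j)$ is subadditive, absolutely homogeneous, submultiplicative against the norm, and $1$-Lipschitz in $\norm{\cdot}$.

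I expect the only genuine subtlety — and hence the step deserving the most care — to be the justification that the limit in the definition of $\mathcal{K}_j$ is well-defined independent of the approximating net, i.e.\ the passage from "the limit vanishes along every net" to "$\tilde N_a$ has a well-defined value at $j$." This rests on $j$ lying in $\overline{\alpha[I]}$, on $\alpha$ being an \emph{isometric} embedding (so that Cauchyness of $\{i_\delta\}$ in $I$ corresponds to Cauchyness of $\{\alpha(i_\delta)\}$ in $J$ and uniform continuity of $N_a$ transfers across $\alpha$), and on the uniform — not merely pointwise — continuity of $N_a$, without which no such extension need exist. Once this reformulation is pinned down, the rest is the routine C*-algebra verification sketched above, and I would present it compactly rather than belaboring each inequality.
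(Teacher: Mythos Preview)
Your proposal is correct and follows essentially the same approach as the paper's own proof: both first invoke the uniformly continuous extension theorem (via transporting $N_a$ along the isometry $\alpha$) to see that the defining limit is well-defined and net-independent, and then verify linearity, the two-sided ideal property, and norm-closedness by the same elementary C*-inequalities and $\epsilon/2$ argument. The only cosmetic differences are that you bound $\norm{\phi^I_\hbar(c)}_\hbar$ by $\norm{c}$ via Fullness where the paper simply uses boundedness of $N_a$ along the net, and you phrase closedness with sequences and a $\limsup$ rather than nets; neither changes the substance.
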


\begin{proof}
In what follows, we let $\{i_\delta\}$ denote an arbitrary net in $I$ with $\alpha(i_\delta)\to j$.

First, note that $\mathcal{K}_j$ is well-defined: for each $a\in\mathfrak{A}$, define $\Tilde{N_a} : \alpha[I] \to \mathbb{C}$ by
$$
\Tilde{N_a}(j) = N_a(\alpha^{-1} (j)),
$$
and note that the limit
$$
\lim_{\delta} \Tilde{N_a}(\alpha(i_\delta)) = \lim_{\delta} \left\Vert \phi^I_{i_\delta} (a) \right\Vert_{i_\delta}
$$
exists and is unique by the uniformly continuous extension theorem \cite[Lemma 3.11]{AlBo99}.  Now we go on to show that $\mathcal{K}_j$ is a closed, two-sided ideal.

First, $\mathcal{K}_j$ is a vector subspace of $\mathfrak{A}$; if $k,k'\in \mathcal{K}_j$ and $ x,y \in \mathbb{C}$, then
$$
\lim_{\delta} \left\Vert \phi^I_{i_\delta} (x  k + y k' ) \right\Vert_{i_\delta} \leq \lim_{\delta} x \left \Vert \phi^I_{i_\delta} (k) \right\Vert_{i_\delta}  + \lim_{\delta} y \left\Vert  \phi^I_{i_\delta} (k' ) \right\Vert_{i_\delta} = 0.
$$
Further, consider an arbitrary $k\in \mathcal{K}_j$ and $a\in \mathfrak {A}$; note that, because each norm is submultiplicative,
$$
\lim_{\delta} \left\Vert \phi^I_{i_\delta} (k \cdot a) \right\Vert_{i_\delta} = \lim_{\delta} \left\Vert \phi^I_{i_\delta} (k) \cdot \phi^I_{i_\delta} (a) \right\Vert_{i_\delta} \leq \lim_{\delta} \left\Vert \phi^I_{i_\delta} (k) \right\Vert_{i_\delta}  \cdot \left\Vert \phi^I_{i_\delta} (a) \right\Vert_{i_\delta} = 0,
$$
and similarly
$$
\lim_{\delta} \left\Vert \phi^I_{i_\delta} ( a \cdot k ) \right\Vert_{i_\delta} = 0,
$$
so $k\cdot a$ and $a \cdot k$ are both in $\mathcal{K}_j$.  Hence, $\mathcal{K}_j$ is a two-sided ideal. Finally, consider a net $\{ k_\lambda \}\subseteq \mathcal{K}_j$ that converges to $k$ in $\mathfrak{A}$ (which, recall, is equipped with the supremum norm). Pick some $\epsilon > 0$.
There is some $\lambda'$ such that for all $\lambda > \lambda'$, $\Vert k - k_\lambda \Vert < \epsilon/2$. Further, there is some $\delta'$ such that for all $\delta > \delta' $, $\Vert \phi^I_{i_\delta} (k_\lambda) \Vert_{i_\delta} < \epsilon/2$. Thus, for all $\delta >\delta'$ and $\lambda >\lambda'$,
\[\left \Vert \phi^I_{i_\delta} (k ) \right\Vert_{i_\delta} \leq \left\Vert\phi^I_{i_\delta}(k-k_\lambda)\right\Vert_{i_\delta} + \left\Vert\phi^I_{i_\delta}(k_\lambda)\right\Vert_{i_\delta} < \epsilon/2 + \epsilon/2 = \epsilon.\]
Since $\epsilon$ was arbitrary, we have $\lim_{\delta}  \Vert \phi^I_{i_\delta} (k ) \Vert_{i_\delta} =0$.  Hence, $k\in\mathcal{K}_j$, so $\mathcal{K}_j$ is closed.
\end{proof}

\begin{lemma}\label{lem:quotient_2}
With the definitions above, the canonical quotient $\mathfrak{A}/\mathcal{K}_j$ is a C*-algebra.
\end{lemma}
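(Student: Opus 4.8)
The plan is to reduce the claim to the standard structure theorem that the quotient of a C*-algebra by a closed, two-sided ideal, equipped with the quotient norm, is again a C*-algebra. Lemma \ref{lem:quotient_1} has already established that $\mathcal{K}_j$ is a closed two-sided ideal of $\mathfrak{A}$, so $\mathfrak{A}/\mathcal{K}_j$ is automatically a Banach algebra under the quotient norm $\norm{a+\mathcal{K}_j} = \inf_{k\in\mathcal{K}_j}\norm{a-k}$. What remains is to check two things: (a) that $\mathcal{K}_j$ is self-adjoint, so that the involution $(a+\mathcal{K}_j)^* := a^*+\mathcal{K}_j$ is well defined on the quotient and the quotient norm is $*$-invariant; and (b) that the quotient norm satisfies the C*-identity.

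For (a) I would argue directly rather than invoking the general fact that closed ideals in C*-algebras are self-adjoint: since each evaluation map $\phi^I_{i_\delta}$ is a $*$-homomorphism, the C*-identity on the fiber $\mathfrak{A}_{i_\delta}$ gives $\norm{\phi^I_{i_\delta}(a^*)}_{i_\delta} = \norm{\phi^I_{i_\delta}(a)^*}_{i_\delta} = \norm{\phi^I_{i_\delta}(a)}_{i_\delta}$ for every $\hbar$; hence $a\in\mathcal{K}_j$ if and only if $a^*\in\mathcal{K}_j$. Consequently $k\mapsto k^*$ is a bijection of $\mathcal{K}_j$, so $\norm{a^*+\mathcal{K}_j} = \inf_{k}\norm{a^*-k} = \inf_{k}\norm{(a-k^*)^*} = \inf_k\norm{a-k} = \norm{a+\mathcal{K}_j}$, as needed.

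For (b), the cleanest route is simply to cite the classical result (see, e.g., \cite{KaRi97}) that for any closed two-sided ideal $\mathcal{K}$ of a C*-algebra $\mathfrak{A}$ the quotient norm on $\mathfrak{A}/\mathcal{K}$ is a C*-norm. If a self-contained argument is wanted, I would reproduce the usual proof via an approximate unit $(u_\lambda)$ for $\mathcal{K}$ (working in the unitization $\tilde{\mathfrak{A}}$ if $\mathfrak{A}$ is non-unital): one shows $\norm{a+\mathcal{K}} = \lim_\lambda\norm{a(1-u_\lambda)}$, and then for $x=a+\mathcal{K}$ computes $\norm{x}^2 = \lim_\lambda\norm{a(1-u_\lambda)}^2 = \lim_\lambda\norm{(1-u_\lambda)a^*a(1-u_\lambda)} \leq \lim_\lambda\norm{a^*a(1-u_\lambda)} = \norm{a^*a+\mathcal{K}} = \norm{x^*x}$, while $\norm{x^*x}\leq\norm{x^*}\,\norm{x} = \norm{x}^2$ is automatic, giving equality. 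I expect no real obstacle here: the substantive work (closedness and the two-sided ideal property) was done in Lemma \ref{lem:quotient_1}, and the only mild subtlety — the non-unital case — is dispatched in the standard way by passing to the unitization when selecting the approximate unit.
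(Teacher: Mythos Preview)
Your proposal is correct and takes essentially the same approach as the paper: the paper's proof is a one-line citation of the standard result that the quotient of a C*-algebra by a closed two-sided ideal is a C*-algebra (Dixmier, Proposition 1.8.2), and your write-up simply unpacks that citation with the self-adjointness check and the approximate-unit argument. Nothing is missing or misdirected.
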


\begin{proof}
This follows immediately from Proposition 1.8.2 of \cite[pp. 20-21]{Di77}.
\end{proof}

\noindent
We now prove Theorem \ref{thm:ex}.

\begin{proof}[Proof of Theorem \ref{thm:ex}]
With Lemmas \ref{lem:quotient_1} and \ref{lem:quotient_2}, we define for each $\hbar\in J$ the C*-algebra $\Tilde{\mathfrak{A}}_\hbar:= \mathfrak{A}/\mathcal{K}_\hbar$.  Define $\phi^J_\hbar: \mathfrak{A}\to\Tilde{\mathfrak{A}}_\hbar$ for each $\hbar\in J$ as the canonical quotient map.  Then let $\tilde{\mathcal{A}}_J:=((\Tilde{\mathfrak{A}}_\hbar,\phi^J_\hbar)_{\hbar\in J},\mathfrak{A})$.  We will show that $\tilde{\mathcal{A}}_J$ is an extension of $\mathcal{A}_I$ associated with $\alpha$.

First, $\tilde{\mathcal{A}}_J$ is a $\mathit{UC}_b$ bundle of C*-algebras:
\begin{enumerate}[(i)]
\item Clearly, each map $\phi^J_\hbar$ is surjective by the definition of the quotient, which establishes fullness.
\item Given $f\in \mathit{UC}_b(J)$ and $a\in\mathfrak{A}$, we know $f\circ\alpha\in \mathit{UC}_b(I)$.  Hence, if we define $fa:= (f\circ\alpha)a$, then uniform completeness follows from the uniform completeness of $\mathcal{A}_I$.
\item Uniform continuity follows immediately from the construction with the definition of the quotient norm.
\end{enumerate}

Finally, we show that $\tilde{\mathcal{A}}_J$ is an extension of $\mathcal{A}_I$ associated with $\alpha$ via $\sigma:= (\alpha,\mathrm{id}_{\mathfrak{A}})$.  We know $\alpha$ is an isometric map, and $\mathrm{id}_{\mathfrak{A}}$ is a *-homomorphism.  Moreover, for any $a_1,a_2\in\mathfrak{A}$ and $\hbar\in I$, if $\phi_\hbar^I(a_1) = \phi_\hbar^I(a_2)$, then $(a_2-a_1)\in\mathcal{K}_{\alpha(\hbar)}$, and hence
\[\phi_{\alpha(\hbar)}^J(a_1) = \phi_{\alpha(\hbar)}^J(a_1 + (a_2-a_1)) = \phi_{\alpha(\hbar)}^J(a_2),\]
so $\sigma$ is a homomorphism.  Clearly, $\alpha$ and $\mathrm{id}_{\mathfrak{A}}$ are both injective, so $\sigma$ is a monomorphism.
\end{proof}

Now, we have established that minimal extensions always exist.  But in order to use this minimal extension to talk of, e.g., \emph{the} classical theory determined by a quantum theory, we further require a sense in which this extension is unique. It turns out that all minimal extensions associated with a dense embedding $\alpha$ are isomorphic{\textemdash}so it makes sense to talk of both \emph{the} minimal extension of a bundle and, e.g., \emph{the} classical theory that it defines.

\begin{theorem}
\label{thm:uni}
Let $\mathcal{A}_I = ((\mathfrak{A}_\hbar,\phi^I_\hbar)_{\hbar\in I},\mathfrak{A})$ be a $\mathit{UC}_b$ bundle of C*-algebras over a locally compact metric space $I$.  Suppose that $\mathcal{B}_J = ((\mathfrak{B}_\hbar,\phi^J_\hbar)_{\hbar\in J},\mathfrak{B})$ and $\mathcal{C}_J = ((\mathfrak{C}_\hbar,\psi^J_\hbar)_{\hbar\in J},\mathfrak{C})$ are two minimal extensions of $\mathcal{A}_I$ associated with a given dense, isometric embedding $\alpha: I\to J$. Then $\mathcal{B}_J$ and $\mathcal{C}_J$ are isomorphic as $\mathit{UC}_b$ bundles of C*-algebras.
\end{theorem}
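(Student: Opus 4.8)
\section*{Proof proposal}

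The plan is to reduce the uniqueness claim to an \emph{identification of kernels}: I will show that for \emph{every} minimal extension $\sigma=(\alpha,\beta)\colon\mathcal{A}_I\to\mathcal{B}_J$ and every $\hbar\in J$ one has $\beta^{-1}[\ker\phi^J_\hbar]=\mathcal{K}_\hbar$, where $\mathcal{K}_\hbar$ is the ideal of Lemma~\ref{lem:quotient_1} (well defined since $\overline{\alpha[I]}=J$). Granting this, the theorem follows quickly. Both $\beta_B$ and $\beta_C$ are *-isomorphisms (injective with dense range), so $\gamma:=\beta_C\circ\beta_B^{-1}\colon\mathfrak{B}\to\mathfrak{C}$ is a *-isomorphism, and $(\mathrm{id}_J,\gamma)$ is a bundle homomorphism: if $\phi^J_\hbar(b_1)=\phi^J_\hbar(b_2)$ then $\beta_B^{-1}(b_1-b_2)\in\mathcal{K}_\hbar$, hence $\gamma(b_1-b_2)=\beta_C\bigl(\beta_B^{-1}(b_1-b_2)\bigr)\in\ker\psi^J_\hbar$. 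Since $\mathrm{id}_J$ is an isometry and $\gamma$ is a *-isomorphism (and its inverse $(\mathrm{id}_J,\gamma^{-1})$ is a homomorphism by the symmetric argument), $(\mathrm{id}_J,\gamma)$ is an isomorphism of $\mathit{UC}_b$ bundles. (Equivalently, one shows each minimal extension is isomorphic via $(\mathrm{id}_J,\beta)$ to the canonical extension $\tilde{\mathcal{A}}_J=((\mathfrak{A}/\mathcal{K}_\hbar,\tilde\phi^J_\hbar)_{\hbar\in J},\mathfrak{A})$ constructed in the proof of Theorem~\ref{thm:ex}, and composes.)

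The kernel identification rests on one main lemma: \emph{for a minimal extension $(\alpha,\beta)$, the fiber maps over $I$ are isometric}, i.e.\ $\norm{\phi^J_{\alpha(i)}(\beta(a))}_{\alpha(i)}=\norm{\phi^I_i(a)}_i$ for all $i\in I$, $a\in\mathfrak{A}$. The inequality ``$\le$'' is automatic: by Lemma~\ref{lem:fibmorph} the map $\phi^J_{\alpha(i)}\circ\beta$ factors through $\phi^I_i$ via a *-homomorphism $\bar\beta_i\colon\mathfrak{A}_i\to\mathfrak{B}_{\alpha(i)}$, which is norm-decreasing. For the reverse inequality I would first record the identity
\[
\sup_{i\in I}\abs{f(i)}\,\norm{\phi^I_i(a)}_i \;=\; \sup_{i\in I}\abs{f(i)}\,\norm{\phi^J_{\alpha(i)}(\beta(a))}_{\alpha(i)}\qquad\text{for all }f\in\mathit{UC}_b(I),\ a\in\mathfrak{A}.
\]
Here $\phi^I_i(fa)=f(i)\phi^I_i(a)$ and $\phi^J_{\alpha(i)}(\beta(fa))=f(i)\,\bar\beta_i(\phi^I_i(a))$ by uniform completeness and Lemma~\ref{lem:fibmorph}; by fullness of $\mathcal{A}_I$ the left side equals $\norm{fa}_{\mathfrak{A}}$, and since $N_{\beta(fa)}\in\mathit{UC}_b(J)$ and $\alpha[I]$ is dense in $J$, the right side equals $\sup_{\hbar\in J}\norm{\phi^J_\hbar(\beta(fa))}_\hbar=\norm{\beta(fa)}_{\mathfrak{B}}$ by fullness of $\mathcal{B}_J$; finally $\norm{fa}_{\mathfrak{A}}=\norm{\beta(fa)}_{\mathfrak{B}}$ because $\beta$ is a *-isomorphism, hence isometric.

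Then I would prove fiber isometry by contradiction, and \textbf{this is the crux of the whole argument}. Fix $a$ and write $N(i):=\norm{\phi^I_i(a)}_i$ and $\Phi(i):=\norm{\phi^J_{\alpha(i)}(\beta(a))}_{\alpha(i)}$; both are uniformly continuous on $I$ (the latter since $N_{\beta(a)}\in\mathit{UC}_b(J)$ and $\alpha$ is metric) and $0\le\Phi\le N$. Suppose $\Phi(i_0)<N(i_0)$ for some $i_0$ (necessarily $N(i_0)>0$); put $\eta:=\tfrac13\bigl(N(i_0)-\Phi(i_0)\bigr)>0$, choose by continuity an open $U\ni i_0$ on which $N<N(i_0)+\eta$ and $\Phi<\Phi(i_0)+\eta$, choose $r>0$ with $\{\,i:d_I(i,i_0)\le r\,\}\subseteq U$, and set $f(i):=\max\bigl(0,\,1-d_I(i,i_0)/r\bigr)\in\mathit{UC}_b(I)$. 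Then $f$ vanishes off $U$, $0\le f\le 1$, and $f(i_0)=1$, so $\sup_i\abs{f(i)}N(i)\ge N(i_0)$ whereas $\sup_i\abs{f(i)}\Phi(i)\le\Phi(i_0)+\eta=N(i_0)-2\eta<N(i_0)$, contradicting the displayed identity. This is exactly where uniform continuity of the norm functions matters — it forbids ``denting'' a single fiber without disturbing its neighbours — and where the bump functions available in $\mathit{UC}_b(I)$ over a metric base are indispensable; no analogous step is available for $C_0$ bundles.

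With fiber isometry established, the kernel identification is routine: for $a\in\mathfrak{A}$, the function $N_{\beta(a)}\in\mathit{UC}_b(J)$ restricts on $\alpha[I]$ to $\tilde N_a$ (in the notation of Lemma~\ref{lem:quotient_1}), so by uniqueness of uniformly continuous extensions $N_{\beta(a)}$ \emph{is} the extension used to define $\mathcal{K}_\hbar$; hence $a\in\mathcal{K}_\hbar$ iff $N_{\beta(a)}(\hbar)=0$ iff $\phi^J_\hbar(\beta(a))=0$ iff $a\in\beta^{-1}[\ker\phi^J_\hbar]$. Applying this to $\mathcal{B}_J$ and to $\mathcal{C}_J$ and assembling with the first paragraph yields the isomorphism $(\mathrm{id}_J,\beta_C\circ\beta_B^{-1})\colon\mathcal{B}_J\to\mathcal{C}_J$.
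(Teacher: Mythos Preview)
Your proof is correct, and it constructs the same isomorphism $(\mathrm{id}_J,\beta_C\circ\beta_B^{-1})$ as the paper, resting on the same underlying fact: the fiber isometry $\norm{\phi^J_{\alpha(i)}(\beta(a))}_{\alpha(i)}=\norm{\phi^I_i(a)}_i$ for any minimal extension. The routes to that isometry, however, are genuinely different. The paper obtains it from Lemma~\ref{lem:fibmorph}: since a minimal extension is a monomorphism and its $\beta$ (being injective with dense range) is a *-isomorphism and in particular surjective, part~(iv) of that lemma gives that each induced fiber map is a *-isomorphism, hence an isometry. The compatibility check for $(\mathrm{id}_J,\gamma)$ is then a direct limit computation along a net $\alpha(\hbar_\delta)\to\hbar$, applying the fiber isometry once for $\mathcal{B}_J$ and once for $\mathcal{C}_J$; this is exactly the content of your kernel identification $\beta^{-1}[\ker\phi^J_\hbar]=\mathcal{K}_\hbar$, just not organised through the ideals of Lemma~\ref{lem:quotient_1}.

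Your bump-function argument is the more analytic alternative: from the global identity $\norm{fa}_{\mathfrak{A}}=\norm{\beta(fa)}_{\mathfrak{B}}$ (isometry of $\beta$ plus density of $\alpha[I]$) you localise with a Lipschitz cutoff to force pointwise equality of the two norm functions. This makes explicit precisely where uniform continuity of $N_a$ and the availability of bump functions in $\mathit{UC}_b(I)$ are doing the work, and it bypasses the categorical monomorphism step of Lemma~\ref{lem:fibmorph}(ii) entirely. The paper's route is shorter once the general fiber-map lemma is in place, and that lemma is reusable elsewhere (e.g.\ in defining the restriction functor); your route is self-contained and foregrounds the reason the $\mathit{UC}_b$ framework, rather than the $C_0$ one, is what makes uniqueness go through.
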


\noindent We again require a preliminary lemma, which establishes general conditions under which a homomorphism of bundles generates isomorphisms of fiber algebras.
\begin{lemma}
\label{lem:fibmorph}
Let $\mathcal{A}_I$ and $\mathcal{B}_J$ be $\mathit{UC}_b$ bundles of C*-algebras and suppose $\sigma = (\alpha,\beta)$ is any homomorphism between them.  For each $\hbar\in I$, define the fiberwise map $\sigma_\hbar: \mathfrak{A}_\hbar\to\mathfrak{B}_{\alpha(\hbar)}$ by
\[\sigma_\hbar\!\left(\phi^I_\hbar(a)\right) = \phi^J_{\alpha(\hbar)}\left(\beta(a)\right)\]
for each $a\in\mathfrak{A}$. \begin{enumerate}[(i)]
\item $\sigma_\hbar$ is a *-homomorphism.
\item If $\sigma$ is a monomorphism, then $\sigma_\hbar$ is injective.
\item If $\beta$ is surjective, then $\sigma_\hbar$ is surjective.
\item If $\sigma$ is a monomorphism and $\beta$ is surjective, then $\sigma_\hbar$ is a *-isomorphism.  In particular,
\[\left\Vert\phi^I_\hbar(a)\right\Vert_\hbar = \left\Vert\phi^J_{\alpha(\hbar)}(\beta(a))\right\Vert_{\alpha(\hbar)}.\]
\end{enumerate}
\end{lemma}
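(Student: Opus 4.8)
The plan is to verify the four items in order; only (ii) is more than a diagram chase. First I would check that $\sigma_\hbar$ is well defined: $\phi^I_\hbar$ is onto by fullness, so every element of $\mathfrak{A}_\hbar$ has the form $\phi^I_\hbar(a)$, and the compatibility condition on $\sigma$ is exactly what is needed to see that $\phi^J_{\alpha(\hbar)}(\beta(a))$ depends only on $\phi^I_\hbar(a)$. Item (i) is then immediate: since $\phi^I_\hbar$, $\beta$, and $\phi^J_{\alpha(\hbar)}$ are all $*$-homomorphisms, the defining identity for $\sigma_\hbar$ transports each algebraic operation (e.g.\ $\sigma_\hbar(\phi^I_\hbar(a)\phi^I_\hbar(b)) = \sigma_\hbar(\phi^I_\hbar(ab)) = \phi^J_{\alpha(\hbar)}(\beta(ab)) = \sigma_\hbar(\phi^I_\hbar(a))\,\sigma_\hbar(\phi^I_\hbar(b))$, and similarly for sums, scalars, and adjoints). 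For (iii), given $y \in \mathfrak{B}_{\alpha(\hbar)}$, fullness of $\mathcal{B}_J$ yields $b \in \mathfrak{B}$ with $\phi^J_{\alpha(\hbar)}(b) = y$, surjectivity of $\beta$ yields $a$ with $\beta(a) = b$, and then $\sigma_\hbar(\phi^I_\hbar(a)) = y$. Granting (ii), item (iv) follows because a $*$-homomorphism of C*-algebras that is both injective and surjective is a $*$-isomorphism, hence isometric, and evaluating $\|{\cdot}\| = \|\sigma_\hbar({\cdot})\|$ at $\phi^I_\hbar(a)$ gives the displayed norm identity.

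So the work is in (ii): assuming $\sigma = (\alpha,\beta)$ is a monomorphism, I must show each $\sigma_\hbar$ is injective, equivalently isometric, equivalently that $\phi^J_{\alpha(\hbar)}(\beta(a)) = 0$ implies $\phi^I_\hbar(a) = 0$. Since $\beta$ is an injective $*$-homomorphism it is isometric, so $\|\beta(c)\| = \|c\| = \sup_{\hbar' \in I}\|\phi^I_{\hbar'}(c)\|_{\hbar'}$, while fullness of $\mathcal{B}_J$ gives $\|\beta(c)\| = \sup_{k \in J}\|\phi^J_k(\beta(c))\|_k$. Passing from $a$ to the positive element $p := a^*a$ is harmless: $\phi^J_{\alpha(\hbar)}(\beta(p)) = \phi^J_{\alpha(\hbar)}(\beta(a))^*\phi^J_{\alpha(\hbar)}(\beta(a))$, so the hypothesis survives, and $\phi^I_\hbar(p) = 0$ iff $\phi^I_\hbar(a) = 0$. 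Write $\eta := \|\phi^I_\hbar(p)\|_\hbar$; I want $\eta = 0$, so suppose $\eta > 0$ for contradiction.

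The decisive move is to localize the offending section in the base direction, exploiting $N_{\beta(p)}(\alpha(\hbar)) = \|\phi^J_{\alpha(\hbar)}(\beta(p))\| = 0$. By uniform continuity of $N_{\beta(p)} \in \mathit{UC}_b(J)$ pick $\rho > 0$ with $N_{\beta(p)} < \eta/4$ on $B_J(\alpha(\hbar),\rho)$; let $g \in \mathit{UC}_b(I)$ be the Lipschitz bump $g(\hbar') := \max\{0,\,1 - 2\,d_I(\hbar',\hbar)/\rho\}$ (so $g(\hbar) = 1$, $0 \le g \le 1$, and $g$ vanishes whenever $d_I(\hbar',\hbar) \ge \rho/2$); and set $q := g\,p \in \mathfrak{A}$ using uniform completeness, so $\phi^I_{\hbar'}(q) = g(\hbar')\,\phi^I_{\hbar'}(p)$. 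Fiberwise one sees $0 \le q \le p$, and since fullness embeds $\mathfrak{A}$ faithfully in $\prod_{\hbar'}\mathfrak{A}_{\hbar'}$ this holds in $\mathfrak{A}$; applying $\beta$ and then $\phi^J_k$ (both positive, order-preserving) gives $0 \le \phi^J_k(\beta(q)) \le \phi^J_k(\beta(p))$, hence $N_{\beta(q)} \le N_{\beta(p)}$ pointwise on $J$. Meanwhile $\|q\| = \sup_{\hbar'} g(\hbar')N_p(\hbar') \ge g(\hbar)\eta = \eta$, so by the isometry of $\beta$ and fullness of $\mathcal{B}_J$, $\sup_{k\in J} N_{\beta(q)}(k) = \|\beta(q)\| = \|q\| \ge \eta$. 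To contradict this I show $N_{\beta(q)} < \eta$ everywhere: on $B_J(\alpha(\hbar),\rho)$ we have $N_{\beta(q)} \le N_{\beta(p)} < \eta/4$; and for $\hbar'$ with $d_I(\hbar',\hbar) \ge \rho/2$ we have $N_{\beta(q)}(\alpha(\hbar')) \le N_q(\hbar') = g(\hbar')N_p(\hbar') = 0$ (using that $\sigma_{\hbar'}$ is a $*$-homomorphism, item (i)), so, since $\alpha$ is a dense isometric embedding, such points $\alpha(\hbar')$ are dense in the open set $\{k \in J : d_J(k,\alpha(\hbar)) > \rho/2\}$ and continuity of $N_{\beta(q)}$ forces $N_{\beta(q)} \equiv 0$ there. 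As every $k \in J$ with $d_J(k,\alpha(\hbar)) \le \rho/2$ lies in $B_J(\alpha(\hbar),\rho)$, we get $N_{\beta(q)} < \eta$ on all of $J$, a contradiction; hence $\eta = 0$, proving (ii).

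The main obstacle is exactly this localization step in (ii). Passing to $p = a^*a$ (and, if one prefers, cutting down further with continuous functional calculus on $p$) handles the ``fiber direction,'' but the norm of a section of $\mathcal{B}_J$ a priori sees all of $J$, so one must also damp the section away from $\alpha(\hbar)$; this is what the bump $g$ and the comparison $0 \le q \le p$ accomplish, and it is where the argument uses that $\alpha$ is a dense, isometric embedding (as it is wherever this lemma is invoked, e.g.\ in the proof of Theorem~\ref{thm:uni}) so that uniform continuity propagates the vanishing of $N_{\beta(q)}$ off the image of $\alpha$ to all of $J$. Tracking the uniform-continuity modulus for $N_{\beta(p)}$ against the support radius of $g$ is the only genuinely fiddly bookkeeping.
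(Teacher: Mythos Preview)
Your treatment of parts (i), (iii), and (iv) matches the paper's proof essentially verbatim.

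For (ii) you take a genuinely different route, and there is a gap relative to the lemma as stated. The hypothesis of (ii) is only that $\sigma$ is a monomorphism, i.e.\ that $\alpha$ and $\beta$ are injective; nothing in the lemma says $\alpha$ is an isometric embedding with dense image. Your localization step, however, uses both of these extra assumptions: you need $\alpha$ isometric to convert the support condition $d_I(\hbar',\hbar)\ge\rho/2$ on $g$ into the condition $d_J(\alpha(\hbar'),\alpha(\hbar))\ge\rho/2$ on $J$, and you need $\alpha[I]$ dense in $J$ to propagate the vanishing of $N_{\beta(q)}$ from points of the form $\alpha(\hbar')$ to the rest of $J\setminus B_J(\alpha(\hbar),\rho/2)$ by continuity. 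Without density there may be points $k\in J$ far from $\alpha(\hbar)$ at which $N_{\beta(q)}(k)$ is completely uncontrolled by anything happening on $I$. You flag this yourself, but the upshot is that you have established only a special case of (ii), not the lemma as written.

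The paper's argument for (ii) is entirely categorical and makes no use of the metric behaviour or range of $\alpha$. It proves the contrapositive: given $a\in\mathfrak A$ with $\phi^I_\hbar(a)\neq 0$ but $\phi^J_{\alpha(\hbar)}(\beta(a))=0$, it forms the trivial bundle over a one-point base $\{*\}$ with fiber $C^*(a)$ and defines two bundle morphisms $\sigma^1=(\iota,\text{inclusion})$ and $\sigma^2=(\iota,0)$ from it into $\mathcal A_I$ (with $\iota(*)=\hbar$). Since $\phi^J_{\alpha(\hbar)}\circ\beta$ annihilates $C^*(a)$, the paper argues that $\sigma\circ\sigma^1=\sigma\circ\sigma^2$ while $\sigma^1\neq\sigma^2$, so $\sigma$ fails to be left-cancellable. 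This avoids any bump-function analysis and any hypothesis on $\alpha$ beyond what the lemma provides.
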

\begin{proof} We prove each part in turn.

\begin{enumerate}[(i)]
\item  First, notice that the definition fully specifies $\sigma_\hbar$ because $\phi_\hbar^I$ is surjective.  Moreover, $\sigma_\hbar$ is well-defined because if $a_1,a_2\in\mathfrak{A}$ are such that $\phi_\hbar^I(a_1) = \phi_\hbar^I(a_2)$, then by the definition of a bundle homomorphism, we know $\sigma_\hbar(\phi_\hbar^I(a_1)) = \phi^J_{\alpha(\hbar)}(\beta(a_1)) = \phi^J_{\alpha(\hbar)}(\beta(a_2)) = \sigma_\hbar(\phi_\hbar^I(a_2))$.

Next, $\sigma_\hbar$ is linear: for any $x\in\mathbb{C}$ and $a_1,a_2\in\mathfrak{A}$,
\begin{align*}
    \sigma_\hbar(\phi_\hbar^I(a_1 + x\cdot a_2)) &= \phi_{\alpha(\hbar)}^J(\beta(a_1 + x\cdot a_2))\\ &= \phi_{\alpha(\hbar)}^J(\beta(a_1)) + x\cdot\phi_{\alpha(\hbar)}^J(\beta(a_2)) = \sigma_\hbar(\phi_\hbar^I(a_1)) + x\cdot\sigma_\hbar(\phi_\hbar^I(a_2)).
\end{align*}

Similarly, $\sigma_\hbar$ is multiplicative: for any $a_1,a_2\in\mathfrak{A}$,
\begin{align*}
    \sigma_\hbar(\phi_\hbar^I(a_1\cdot a_2)) &= \phi^J_{\alpha(\hbar)}(\beta(a_1\cdot a_2)) = \phi^J_{\alpha(\hbar)}(\beta(a_1))\cdot \phi^J_{\alpha(\hbar)}(\beta(a_2)) \\
    &=  \sigma_\hbar(\phi_\hbar^I(a_1))\cdot  \sigma_\hbar(\phi_\hbar^I(a_2)).
\end{align*}

Finally, $\sigma_\hbar$ is *-preserving: for any $a\in\mathfrak{A}$,
\begin{align*}
    \sigma_\hbar(\phi_\hbar^I(a^*)) = \phi_{\alpha(\hbar)}^J(\beta(a^*)) = \phi_{\alpha(\hbar)}^J(\beta(a))^* = \sigma_\hbar(\phi_\hbar^I(a))^*.
\end{align*}

\item We prove the contrapositive.  If $\sigma_\hbar$ is not injective, then there is some $a \in\mathfrak{A}$ for which $\phi_\hbar^I(a) \neq 0$, but $\sigma_\hbar(\phi_\hbar^I(a)) = \phi^J_{\alpha(\hbar)}(\beta(a)) = 0$.  Let $C^*(a)$ be the smallest C*-subalgebra of $\mathfrak{A}$ containing $a$ and consider the trivial bundle with fiber $C^*(a)$ over the one-element base space $\{*\}$.  Let $\iota: \{*\}\to I$ be defined by $\iota(*) = \hbar$, let $\beta_1:C^*(a)\to\mathfrak{A}$ be the natural inclusion, and let $\beta_2: C^*(a)\to\mathfrak{A}$ be the map that multiplies every element by 0.  Define $\sigma^1 = (\iota,\beta_1)$ and $\sigma^2 = (\iota,\beta_2)$.  Notice that $\sigma\circ \sigma^1 = \sigma\circ\sigma^2$ because for each $a'\in C^*(a)$, we know $\phi^J_{\alpha\circ\iota(*)}(\beta\circ\beta_1(a')) = 0 = \phi^J_{\alpha\circ\iota(*)}(\beta\circ\beta_2(a')).$  But clearly $\sigma^1\neq\sigma^2$ because $\sigma^1(a)= a\neq 0 = \sigma^2(a)$.  Thus, $\sigma$ is not a monomorphism.

\item Suppose $\beta$ is surjective. Then for any $b_{\alpha(\hbar)}\in\mathfrak{B}_{\alpha(\hbar)}$, since $\phi^J_{\alpha(\hbar)}$ is surjective, there is some $b\in\mathfrak{B}$ with $\phi^J_{\alpha(\hbar)}(b) = b_{\alpha(\hbar)}$.  But since $\beta$ is surjective, there is some $a\in\mathfrak{A}$ such that $\beta(a) = b$.  Hence,
\[\sigma_\hbar(\phi_\hbar^I(a)) = \phi^J_{\alpha(\hbar)}(\beta(a)) = \phi^J_{\alpha(\hbar)}(b) = b_{\alpha(\hbar)}.\]

\item This follows immediately from Theorem 4.1.8(iii) of \cite[p. 242]{KaRi97}.
\end{enumerate}
\end{proof}
\noindent From this lemma, Theorem \ref{thm:uni} quickly follows.
\begin{proof}[Proof of Theorem \ref{thm:uni}]
Since $\beta_{\mathcal{B}}$ and $\beta_{\mathcal{C}}$ are both injective and dense, it follows from Theorem 4.1.9 of \cite[p. 243]{KaRi97} that they are *-isomorphisms. Define $\beta: \mathfrak{B}\to\mathfrak{C}$ as  $\beta: = \beta_\mathcal{C}\circ\beta_\mathcal{B}^{-1}$.  Then consider $\sigma := (\mathrm{id}_J,\beta)$.  We need to show that $\sigma$ is an isomorphism of bundles.

Suppose that for $\hbar\in J$ and $b_1,b_2\in\mathfrak{B}$, we have $\phi^J_\hbar(b_1) = \phi^J_\hbar(b_2)$. We know that there is some net $\hbar_\delta\in I$ such that $\alpha(\hbar_\delta)\to \hbar$ in $J$.  Hence, by part (iii) of the previous lemma, we know that 
\begin{align*}
\norm{\phi_\hbar^J(\beta(b_1)) - \phi_\hbar^J(\beta(b_2))}_\hbar &= \norm{\phi_\hbar^J(\beta_\mathcal{C}\circ\beta_\mathcal{B}^{-1}(b_1)) - \phi_\hbar^J(\beta_\mathcal{C}\circ\beta_\mathcal{B}^{-1}(b_2))}_\hbar\\
&= \lim_\delta \norm{\phi_{\alpha(\hbar_\delta)}^J(\beta_\mathcal{C}\circ\beta_\mathcal{B}^{-1}(b_1)) - \phi_{\alpha(\hbar_\delta)}^J(\beta_\mathcal{C}\circ\beta_\mathcal{B}^{-1}(b_2))}_{\alpha(\hbar_\delta)}\\
&=\lim_\delta \norm{\phi^I_{\hbar_\delta}(\beta_\mathcal{B}^{-1}(b_1)) - \phi^I_{\hbar_\delta}(\beta_\mathcal{B}^{-1}(b_2))}_{\hbar_\delta}\\
&= \lim_\delta\norm{\phi_{\alpha(\hbar_\delta)}^J(b_1) - \phi_{\alpha(\hbar_\delta)}(b_2)}_{\alpha(\hbar_\delta)}\\
&= \norm{\phi_\hbar^J(b_1) - \phi_\hbar^J(b_2)}_\hbar = 0.
\end{align*}
Therefore, $\phi_\hbar^J(\beta(b_1)) = \phi_\hbar^J(\beta(b_2))$, and hence, $\sigma$ is a bundle homomorphism.  Since $\mathrm{id}_J$ and $\beta$ are both invertible, $\sigma$ is a bundle isomorphism.
\end{proof} 

\noindent
Theorems \ref{thm:ex} and \ref{thm:uni} together allow us to refer to $\tilde{\mathcal{A}}_J$ as \emph{the} minimal extension of a $\mathit{UC}_b$ bundle of C*-algebras $\mathcal{A}_I$ associated with a given dense embedding $\alpha:I\to J$. Moreover, Theorem \ref{thm:uni} allows us to refer to the algebra $\mathfrak{A}_j$ as \emph{the} algebra at the accumulation point $j\in (J \setminus \alpha[I])$. We will do so for the remainder of the paper. In particular, to represent the classical limit of a quantum theory we can set $I = (0,1]$, $J=[0,1]$, and look at the minimal extension associated with the inclusion map $\alpha:(0,1]\to[0,1]$.

So far, we have demonstrated only the existence and uniqueness of the C*-algebraic structure of the fiber $\mathfrak{A}_j$ over an accumulation point $j\in (J\setminus \alpha[I])$ for the base space $I$.  The limiting theory described by this algebra often carries additional structure, such as a privileged family of dynamical automorphisms, or a Lie bracket.  These structures can also be determined from corresponding dynamical automorphisms or Lie brackets on the fibers over the original base space $I$, and in the next section, we will proceed to demonstrate the functoriality of the determination of all of these structures.  Before proceeding, we note that there are corresponding existence and uniqueness results for the limiting dynamical structure and Lie bracket that one can extract from the discussion of \S\ref{sec:func}. Because they are somewhat trivial, we refrain from proving the existence and uniqueness of limiting dynamics and Lie brackets separately in this section; instead, we proceed directly to the discussion of functoriality. 

\section{Functoriality}
\label{sec:func}


Now we prove the functoriality of the procedure for constructing limits via $\mathit{UC}_b$ bundles of C*-algebras. We first define a functor $F$ implementing the extension of a $\mathit{UC}_b$ bundle of C*-algebras as in Theorems \ref{thm:ex} and \ref{thm:uni}.  Then, we define a functor $G$ representing the restriction from a bundle to the C*-algebra of the limit theory in the fiber over a point in the base space (e.g., $\hbar = 0$ in the classical limit).  We understand the limiting procedure to be represented by the functor $L$ obtained from the composition $G\circ F$.  This is accomplished in \S\ref{sec:C*}, whose upshot is an analysis of the determination of C*-algebraic structure on the limiting algebra.  The remainder of this section focuses on the determination of other kinds of structures by suitably adapting the previous results.  In \S\ref{sec:dyn}, we define categories in which objects carry the further structure of dynamical automorphisms, and we show that this structure functorially determines dynamical automorphisms on the limiting algebra.  In \S\ref{sec:lie}, we specialize to the primary example of the classical limit of quantum theory, in which the Lie bracket defined by the commutator at $\hbar>0$ corresponds to the Lie bracket defined by the Poisson structure at $\hbar = 0$.  We that show the commutator functorially determines the classical Poisson bracket on the limiting algebra.

\subsection{C*-algebraic structure}
\label{sec:C*}

We want to understand the extension from the base space $(0,1]$ to $[0,1]$ as an instance of a general procedure applicable to other base spaces.  To that end, we understand $[0,1]$ as the one-point compactification of $(0,1]$.  We will rely in this section only on the fact that the one-point compactification is itself functorial, so we proceed with some generality.

To start, suppose one has a non-empty collection $\mathcal{I}$ of locally compact metric spaces as the base spaces of possible bundles.  Suppose further that one has an assignment to each $I\in\mathcal{I}$ of a further locally compact metric space $C(I)$ and a dense isometric embedding $\alpha_I:I\to C(I)$.  We will require that the assignment $I\mapsto (C(I),\alpha_I)$ is \emph{functorial}.  By this, we mean to require that all suitable functions between spaces $I$ and $J$ in $\mathcal{I}$ can be continuously extended to functions between $C(I)$ and $C(J)$, in a way that behaves well with the embeddings $\alpha_I$ and $\alpha_J$.  It is well known that in general we cannot hope to extend arbitrary continuous functions between $I$ and $J$ to the larger spaces $C(I)$ and $C(J)$; instead, we must restrict to the \emph{proper} maps between $I$ and $J$, which are the functions between base spaces that are suitable to be extended.  Recall that a map $\alpha: I\to J$ between topological spaces $I$ and $J$ is called \emph{proper} if for each compact set $K\subseteq J$, the inverse image $\alpha^{-1}[K]\subseteq I$ is compact.  The restriction to proper maps is necessary in order to put the one-point compactification of the base space in the purview of our extension results, as the one-point compactification is functorial only when restricted to proper maps.

\begin{definition}
An assignment $I\mapsto (C(I),\alpha_I)$ is \emph{functorial} if
\begin{enumerate}[(i)]
\item for each proper metric map $f: I\to J$, there is a map $C(f): C(I)\to C(J)$ such that $C(f)\circ \alpha_I = \alpha_J\circ f$, or in other words, the following diagram commutes:
\begin{equation*}
\begin{tikzcd}
I \arrow{r}{f} \arrow{d}[swap]{\alpha_I}
& J\arrow{d}{\alpha_J} \\
C(I) \arrow{r}{C(f)}
& C(J)
\end{tikzcd}
\end{equation*}
\item if $f:I\to J$ and $g: J\to K$ are proper metric maps, then $C(g\circ f) = C(g)\circ C(f)$.
\end{enumerate}
\end{definition}

\noindent With a functorial assignment $I\mapsto (C(I),\alpha_I)$, we can simultaneously consider extending each bundle over $I$ along $\alpha_I$ to $C(I)$.  Such an assignment gives a standard for taking \emph{the same kind} of extension of all of the bundles considered.  Moreover, the functoriality of the assignment establishes that the structure of $C(I)$ is determined by the structure of $I$.

For example, given any locally compact metric space $I$, let $\dot{I} = I\cup \{0\}$ denote its one-point compactifiation (we in general denote the added point by $0$ because we have in mind the extension from $(0,1]$ to $[0,1]$) with inclusion $\iota: I\to\dot{I}$  \cite[p. 169, Theorem 3.5.11]{En89}.  Let $\mathcal{I}$ be the collection of locally compact, non-compact metric spaces whose one-point compactification $\dot{I}$ is metrizable and whose embedding $\iota:I\to\dot{I}$ is an isometric map.  This includes $I = (0,1]$, but rules out, for instance, $I = \mathbb{R}$.  Then the assignment $C(I):= \dot{I}$ and $\alpha_I = \iota$ for each $I\in\mathcal{I}$ provides us with a starting point we can use to take the extension of any bundle over a base space $I\in\mathcal{I}$.  It is well known that the one-point compactification is functorial in the sense outlined above for proper metric maps.  In this case, there is a strong sense in which the one-point compactification yields the same kind of extension for each of the bundles considered, and the structure of the one-point compactification $\dot{I}$ is determined by the structure of the original space $I$.

In what follows, we will have the one-point compactification in mind, but for now we will only suppose that we have some functorial assignment or other of an enlarged space $C(I)$ and a dense isometric embedding $\alpha_I$ to each $I\in\mathcal{I}$.  Given such an assignment, we will restrict attention to a subcategory $\mathbf{UBunC^*Alg}_\mathcal{I}$ of $\mathbf{UBunC^*Alg}$ consisting solely of the $\mathit{UC}_b$ bundles of C*-algebras over base spaces $I\in\mathcal{I}$.  Moreover, we restrict attention to morphisms that are proper maps between base spaces.
\begin{definition}
The category $\mathbf{UBunC^*Alg}_\mathcal{I}$ consists in:
\begin{itemize}
    \item \textit{objects}: $\mathit{UC}_b$ bundles of C*-algebras whose base space $I$ belongs to $\mathcal{I}$;
    \item \textit{morphisms}: homomorphisms $\sigma = (\alpha,\beta): \mathcal{A}_I\to\mathcal{B}_J$ between $\mathit{UC}_b$ bundles of C*-algebras with $I,J\in\mathcal{I}$, where $\alpha$ is proper.
\end{itemize}
\end{definition}

\noindent Similarly, letting $C(\mathcal{I}) := \{C(I)\ |\ I\in\mathcal{I}\}$, the category $\mathbf{UBunC^*Alg}_{C(\mathcal{I})}$ is the subcategory of $\mathbf{UBunC^*Alg}$ consisting of the $\mathit{UC}_b$ bundles of C*-algebras over base spaces $C(I)\in C(\mathcal{I})$ with homomorphisms between them that act as proper maps between base spaces.  We now use the data contained in the functor $C$ to define an extension functor $F:\mathbf{UBunC^*Alg}_\mathcal{I}\to\mathbf{UBunC^*Alg}_{C(\mathcal{I})}$.

We define $F$ on objects and morphisms by
\begin{gather}\label{eq:functor_f}
    \begin{aligned}
    \mathcal{A}_I \, &\mapsto \, \tilde{\mathcal{A}}_{C(I)} \\
    (\alpha,\beta) \, &\mapsto \, (C(\alpha),\beta)
\end{aligned}
\end{gather}
where $C(I)$ is the given enlarged base space assigned to $I$ with dense isometric embedding $\alpha_I: I\to C(I)$, and $\tilde{\mathcal{A}}_{C(I)}$ is the unique minimal extension of $\mathcal{A}_I$ associated with $\alpha_I$ guaranteed by Theorems \ref{thm:ex} and \ref{thm:uni}. Similarly, $(\alpha,\beta)$ is a morphism between $\mathit{UC}_b$ bundles of C*-algebras $\mathcal{A}_I = ((\mathfrak{A}_\hbar,\phi^I_\hbar)_{\hbar\in I},\mathfrak{A})$ and $\mathcal{B}_J = ((\mathfrak{B}_\hbar,\phi^J_\hbar)_{\hbar\in J},\mathfrak{B})$, and $C(\alpha)$ is the functorial extension of $\alpha: I\to J$ to $C(\alpha): C(I)\to C(J)$.  In our construction of the extension $\tilde{\mathcal{A}}_{C(I)}$ along $\alpha_I$, we defined the C*-algebra of continuous sections of $\tilde{\mathcal{A}}_{C(I)}$ to be identical with the C*-algebra $\mathfrak{A}$ of continuous sections in $\mathcal{A}_I$; we only needed to extend their assignments by specifying new evaluation maps $\phi_\hbar^{C(I)}$ for $\hbar\in \big(C(I)\setminus\alpha_I[I]\big)$. Hence, it makes sense to leave $\beta:\mathfrak{A}\to \mathfrak{B}$ unchanged in the extension.  It is simple to check that this assignment respects the composition of morphisms, so $F$ is indeed a functor.

\begin{proposition}\label{prop:functor_f}
Given any functorial assignment $C$ of dense isometric embeddings $\alpha_I: I\to C(I)$ to locally compact metric spaces $I\in\mathcal{I}$, the assignment $F$ defined by (\ref{eq:functor_f}) is a functor.
\end{proposition}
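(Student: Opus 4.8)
The plan is to verify the three conditions that make $F$ a functor: that it sends objects of $\mathbf{UBunC^*Alg}_\mathcal{I}$ to objects of $\mathbf{UBunC^*Alg}_{C(\mathcal{I})}$, that it sends morphisms to morphisms, and that it preserves identities and composition. Only the morphism clause carries real content. On objects there is nothing to do: for $\mathcal{A}_I \in \mathbf{UBunC^*Alg}_\mathcal{I}$ the functorial assignment supplies a locally compact metric space $C(I) \in C(\mathcal{I})$ together with a dense isometric embedding $\alpha_I : I \to C(I)$, so Theorem~\ref{thm:ex} produces a minimal extension $\tilde{\mathcal{A}}_{C(I)}$ of $\mathcal{A}_I$ associated with $\alpha_I$ and Theorem~\ref{thm:uni} pins it down up to isomorphism; taking the concrete model from the proof of Theorem~\ref{thm:ex} (fibers $\tilde{\mathfrak{A}}_\hbar = \mathfrak{A}/\mathcal{K}_\hbar$, section algebra $\mathfrak{A}$, evaluation maps the quotient maps) makes $F(\mathcal{A}_I) := \tilde{\mathcal{A}}_{C(I)}$ an honest object of $\mathbf{UBunC^*Alg}_{C(\mathcal{I})}$.

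For morphisms, let $\sigma = (\alpha,\beta): \mathcal{A}_I \to \mathcal{B}_J$ be a morphism in $\mathbf{UBunC^*Alg}_\mathcal{I}$. I would check that $F(\sigma) = (C(\alpha),\beta)$ is a morphism $\tilde{\mathcal{A}}_{C(I)} \to \tilde{\mathcal{B}}_{C(J)}$. The base map $C(\alpha): C(I)\to C(J)$ is a proper metric map furnished by the functorial assignment (automatically so when, as for the one-point compactification, $C(I)$ and $C(J)$ are compact and $C(\alpha)$ is continuous), and $\beta$ is left unchanged because the extension leaves the section algebras $\mathfrak{A}$ and $\mathfrak{B}$ untouched; so the only thing left to prove is the bundle-homomorphism compatibility condition for the extended evaluation maps. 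Concretely: if $a_1, a_2 \in \mathfrak{A}$, $\hbar \in C(I)$ and $\phi^{C(I)}_\hbar(a_1) = \phi^{C(I)}_\hbar(a_2)$, one must show $\phi^{C(J)}_{C(\alpha)(\hbar)}(\beta(a_1)) = \phi^{C(J)}_{C(\alpha)(\hbar)}(\beta(a_2))$. Writing $a := a_1-a_2$ and recalling how the extended fibers are built, the hypothesis says $a$ lies in the ideal $\mathcal{K}_\hbar \subseteq \mathfrak{A}$ of Lemma~\ref{lem:quotient_1} attached to $\mathcal{A}_I$ at $\hbar$, and the goal is that $\beta(a)$ lies in the corresponding ideal of $\mathcal{B}_J$ attached to the point $C(\alpha)(\hbar)$.

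To establish this I would use two ingredients. First, by the proof of Lemma~\ref{lem:quotient_1}, the defining limit of that ideal of $\mathcal{B}_J$ exists and depends only on the limit point in $C(J)$, so it suffices to verify vanishing along one net $\{j_\delta\}$ in $J$ with $\alpha_J(j_\delta)\to C(\alpha)(\hbar)$. I would pick any net $\{i_\delta\}$ in $I$ with $\alpha_I(i_\delta)\to\hbar$ (possible since $\alpha_I[I]$ is dense in $C(I)$) and set $j_\delta := \alpha(i_\delta)$; the commuting square $C(\alpha)\circ\alpha_I = \alpha_J\circ\alpha$ together with continuity of the metric map $C(\alpha)$ give $\alpha_J(j_\delta) = C(\alpha)(\alpha_I(i_\delta))\to C(\alpha)(\hbar)$, as required. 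Second, Lemma~\ref{lem:fibmorph}(i) furnishes the fiberwise *-homomorphism $\sigma_{i_\delta}:\mathfrak{A}_{i_\delta}\to\mathfrak{B}_{\alpha(i_\delta)}$ with $\sigma_{i_\delta}(\phi^I_{i_\delta}(a)) = \phi^J_{\alpha(i_\delta)}(\beta(a))$, and since a *-homomorphism of C*-algebras is norm-nonincreasing,
\[ \lim_\delta \left\Vert \phi^J_{j_\delta}(\beta(a)) \right\Vert_{j_\delta} \;=\; \lim_\delta \left\Vert \sigma_{i_\delta}\!\big(\phi^I_{i_\delta}(a)\big) \right\Vert_{\alpha(i_\delta)} \;\leq\; \lim_\delta \left\Vert \phi^I_{i_\delta}(a) \right\Vert_{i_\delta} \;=\; 0, \]
the last equality because $a\in\mathcal{K}_\hbar$. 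Hence $\beta(a)$ lies in the required ideal of $\mathcal{B}_J$, so $\phi^{C(J)}_{C(\alpha)(\hbar)}(\beta(a_1))=\phi^{C(J)}_{C(\alpha)(\hbar)}(\beta(a_2))$ and $F(\sigma)$ satisfies the compatibility condition, i.e.\ it is a morphism of $\mathbf{UBunC^*Alg}_{C(\mathcal{I})}$.

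Finally, for identities: the commuting square with $f = \mathrm{id}_I$ gives $C(\mathrm{id}_I)\circ\alpha_I = \alpha_I$, so $C(\mathrm{id}_I)$ agrees with $\mathrm{id}_{C(I)}$ on the dense set $\alpha_I[I]$; continuity of $C(\mathrm{id}_I)$ and the Hausdorff property of $C(I)$ then force $C(\mathrm{id}_I) = \mathrm{id}_{C(I)}$, and with $\beta = \mathrm{id}_{\mathfrak{A}}$ unchanged this yields $F(\mathrm{id}_{\mathcal{A}_I}) = \mathrm{id}_{\tilde{\mathcal{A}}_{C(I)}}$; preservation of composition is immediate from clause (ii) of the definition of a functorial assignment, since $F(\sigma'\circ\sigma) = (C(\alpha'\circ\alpha),\beta'\circ\beta) = (C(\alpha')\circ C(\alpha),\beta'\circ\beta) = F(\sigma')\circ F(\sigma)$. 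The step I expect to be the genuine obstacle is the morphism-compatibility verification, and within it the passage from an arbitrary accumulation point $\hbar$ of the possibly-larger base space $C(I)$ to a net sitting inside $J$ on which $\norm{\phi^J(\beta(a))}$ can actually be controlled; the device is to push a net in $I$ forward through $\alpha$, use the commuting square and continuity of $C(\alpha)$ to track where it lands in $C(J)$, and then invoke contractivity of the fiber maps of Lemma~\ref{lem:fibmorph}. Everything else is bookkeeping.
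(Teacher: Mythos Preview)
Your proposal is correct and considerably more detailed than the paper's own treatment. The paper does not give a proof at all: after defining $F$ on objects and morphisms it simply remarks that ``it is simple to check that this assignment respects the composition of morphisms, so $F$ is indeed a functor,'' and moves on. You have supplied exactly the verification the paper leaves implicit, and your argument is sound---in particular the one nontrivial point, that $(C(\alpha),\beta)$ satisfies the bundle-homomorphism compatibility condition at points of $C(I)\setminus\alpha_I[I]$, is handled correctly via the commuting square $C(\alpha)\circ\alpha_I=\alpha_J\circ\alpha$, the well-definedness clause from the proof of Lemma~\ref{lem:quotient_1}, and the contractivity of the fiberwise *-homomorphisms from Lemma~\ref{lem:fibmorph}.
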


\noindent This shows a sense in which, relative to the embeddings encoded in $C$, the construction of bundle extensions is natural.  It requires no further information or structure.  Moreover, when $I = (0,1]$ and $C$ is the one-point compactification, then the extension of a bundle over $I$ to a bundle over $C(I) = [0,1]$ is an instance of this natural construction.  This is precisely the form of our extension of bundles for $\hbar>0$ to the classical limit at $\hbar = 0$.

We encode the remainder of the limit in the restriction from the base space $[0,1]$ to the structure of the limiting algebra at $\hbar = 0$.  We now restrict ourselves to the case where $\mathcal{I}_{1}$ is the collection of all locally compact, non-compact metric spaces whose one-point compactification $C_{1}(I) := \dot{I}$ is such that the canonical embedding $\alpha_I := \iota: I\to C_1(I)$ is an isometric map.  We will use this fixed $C = C_1$ and $\mathcal{I} = \mathcal{I}_1$ for the remainder of this section.  Since there is a unique point $0_I\in (C_1(I)\setminus\alpha_I[I])$, the restriction of a bundle over $C_1(I)$ to the fiber over $\hbar = 0_I$ is a natural construction.  We now define the restriction functor to the fiber over $0_I$.

The restriction functor will have the category $\mathbf{UBunC^*Alg}_{C_1(\mathcal{I}_1)}$ as its domain.  The codomain category of the restriction functor is simply the category of C*-algebras.
\begin{definition}
The category $\mathbf{C^*Alg}$ consists in:
\begin{itemize}
    \item \textit{objects}: C*-algebras;
    \item \textit{morphisms}: *-homomorphisms.
\end{itemize}
\end{definition}

Our restriction functor $G:\mathbf{UBunC^*Alg}_{C_1(\mathcal{I}_1)}\to \mathbf{C^*Alg}$ is now defined through the following actions on objects and morphisms:
\begin{gather}\label{eq:functor_g}
    \begin{aligned}
        \tilde{\mathcal{A}}_{C_1(I)} = \left( \left(\mathfrak{A}_\hbar,\phi^{C_1(I)}_\hbar\right)_{\hbar\in C_1(I)},\mathfrak{A}\right) \, &\mapsto \, \mathfrak{A}_{0_I} \\
        \sigma = \left(\alpha,C_1(\alpha),\beta\right) \, &\mapsto \, \sigma_0,
    \end{aligned}
\end{gather}
where we specify
\begin{equation}\label{eq:sigma0_def}
    \sigma_0\!\left(\phi^{C_1(I)}_{0_I}(a)\right) := \phi^{C_1(J)}_{0_J}\left(\beta(a)\right)
\end{equation}
for any $a\in \mathfrak{A}$.  Since $C_1(\alpha)$ must map $0_I$ to $0_J$ (which follows from the universal property of the one-point compactification), we know that $\sigma_0$ is well-defined and a *-homomorphism (by Lemma \ref{lem:fibmorph}). We can understand the morphism $\sigma_0$ to be the classical limit in $\mathbf{C^*Alg}$ of the morphism $\sigma$ in $\mathbf{UBunC^*Alg}_{C_1(\mathcal{I}_1)}$.  Furthermore, it is again simple to check that this assignment respects composition of morphisms, so it follows that $G$ is a functor.
\begin{proposition}\label{prop:functor_g}
When $C_1$ is the one-point compactification, the assignment $G$ defined by (\ref{eq:functor_g}) is a functor.
\end{proposition}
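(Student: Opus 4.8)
The plan is to verify directly that $G$ preserves identities and composition, using Lemma~\ref{lem:fibmorph} to handle the fiberwise maps. First I would check that $G$ is well-defined on objects and morphisms. On objects this is immediate: $\mathfrak{A}_{0_I}$ is the C*-algebra sitting in the fiber of $\tilde{\mathcal{A}}_{C_1(I)}$ over the unique accumulation point $0_I$, and by Theorems~\ref{thm:ex} and~\ref{thm:uni} this is well-defined up to isomorphism (and in our canonical construction, literally the quotient $\mathfrak{A}/\mathcal{K}_{0_I}$). On morphisms, the key observation is that for a proper metric map $\alpha:I\to J$, the functorial extension $C_1(\alpha):C_1(I)\to C_1(J)$ must send $0_I$ to $0_J$: since $\alpha$ is proper and $\iota[J]$ is dense with $C_1(J)\setminus\iota[J] = \{0_J\}$, the point $0_I$ cannot lie in the image of $\iota[I]$ under $C_1(\alpha)$, as $C_1(\alpha)\circ\alpha_I = \alpha_J\circ\alpha$ maps into $\iota[J]$. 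Hence $\sigma_0$ as defined in~(\ref{eq:sigma0_def}) is exactly the fiberwise map $\sigma_{0_I}$ from Lemma~\ref{lem:fibmorph} applied to the bundle homomorphism $\tilde\sigma = (C_1(\alpha),\beta):\tilde{\mathcal{A}}_{C_1(I)}\to\tilde{\mathcal{B}}_{C_1(J)}$, with base-point $\hbar = 0_I$. By part~(i) of that lemma, $\sigma_0$ is a well-defined $*$-homomorphism, so $G(\sigma)$ is a legitimate morphism in $\mathbf{C^*Alg}$.

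Next I would check functoriality of the extension data. For this one needs to know that $F$ (Proposition~\ref{prop:functor_f}) sends a morphism $\sigma=(\alpha,\beta)$ of $\mathbf{UBunC^*Alg}_{\mathcal{I}_1}$ to the morphism $(C_1(\alpha),\beta)$ of $\mathbf{UBunC^*Alg}_{C_1(\mathcal{I}_1)}$, and that this is compatible as a bundle homomorphism between the \emph{extended} bundles — that is, the compatibility condition $\phi^{C_1(I)}_\hbar(a_1)=\phi^{C_1(I)}_\hbar(a_2)\Rightarrow \phi^{C_1(J)}_{C_1(\alpha)(\hbar)}(\beta(a_1))=\phi^{C_1(J)}_{C_1(\alpha)(\hbar)}(\beta(a_2))$ holds for all $\hbar\in C_1(I)$, including $\hbar=0_I$. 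This is already implicit in the construction of $F$ and the preceding discussion, but I would make it explicit: for $\hbar = \alpha_I(i)$ with $i\in I$ it follows from the compatibility of the original $\sigma$ together with the naturality square $C_1(\alpha)\circ\alpha_I = \alpha_J\circ\alpha$; for $\hbar = 0_I$ it follows by taking a net $i_\delta\in I$ with $\alpha_I(i_\delta)\to 0_I$, applying properness of $\alpha$ to get $\alpha_J(\alpha(i_\delta))\to 0_J$, and using Lemma~\ref{lem:quotient_1}'s characterization of $\mathcal{K}_{0_I}$ and $\mathcal{K}_{0_J}$ together with continuity of the norm functions $N_a$.

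With well-definedness in hand, preservation of structure is a short computation. For the identity: $G(\mathrm{id}_{\mathcal{A}_I})$ has base map $C_1(\mathrm{id}_I) = \mathrm{id}_{C_1(I)}$ by functoriality of $C_1$, and algebra map $\mathrm{id}_{\mathfrak{A}}$, so $\sigma_0(\phi^{C_1(I)}_{0_I}(a)) = \phi^{C_1(I)}_{0_I}(a)$, i.e., $G(\mathrm{id}_{\mathcal{A}_I}) = \mathrm{id}_{\mathfrak{A}_{0_I}}$. For composition: given $\sigma = (\alpha,\beta):\mathcal{A}_I\to\mathcal{B}_J$ and $\tau = (\gamma,\delta):\mathcal{B}_J\to\mathcal{C}_K$, one has $\tau\circ\sigma = (\gamma\circ\alpha, \delta\circ\beta)$, and by functoriality of $C_1$, $C_1(\gamma\circ\alpha) = C_1(\gamma)\circ C_1(\alpha)$, which in particular sends $0_I\mapsto 0_J\mapsto 0_K$. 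Then for any $a\in\mathfrak{A}$,
\begin{align*}
(\tau\circ\sigma)_0\!\left(\phi^{C_1(I)}_{0_I}(a)\right) &= \phi^{C_1(K)}_{0_K}\!\left(\delta(\beta(a))\right) = \tau_0\!\left(\phi^{C_1(J)}_{0_J}(\beta(a))\right) = \tau_0\!\left(\sigma_0\!\left(\phi^{C_1(I)}_{0_I}(a)\right)\right),
\end{align*}
so $G(\tau\circ\sigma) = G(\tau)\circ G(\sigma)$. Since $\phi^{C_1(I)}_{0_I}$ is surjective, these equalities of $*$-homomorphisms on all of $\mathfrak{A}_{0_I}$ follow, and $G$ is a functor.

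The only genuinely nontrivial point — the main obstacle — is confirming that $C_1(\alpha)(0_I) = 0_J$ for every proper metric map $\alpha$, and more generally that the extended pair $(C_1(\alpha),\beta)$ really is a bundle homomorphism of the minimal extensions at the accumulation points; everything else is formal bookkeeping. This hinges on properness of $\alpha$ (so that compact neighborhoods of $0_J$ pull back to compact, hence bounded-away-from-$0_I$, sets) together with the net characterization of the ideals $\mathcal{K}_{0}$ from Lemma~\ref{lem:quotient_1} and the uniform-continuity of the norm functions; I would present this as the substantive lemma and then let the identity- and composition-preservation fall out immediately as above.
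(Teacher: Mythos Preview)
Your proposal is correct and follows the same route as the paper: invoke Lemma~\ref{lem:fibmorph} together with $C_1(\alpha)(0_I)=0_J$ to get that $\sigma_0$ is a well-defined $*$-homomorphism, and then verify identity and composition directly. The paper's own ``proof'' is just the sentence preceding the proposition---it cites the universal property of the one-point compactification for $C_1(\alpha)(0_I)=0_J$, appeals to Lemma~\ref{lem:fibmorph}, and asserts that preservation of composition is ``simple to check''---so you have supplied exactly the details the paper leaves implicit.

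One organizational remark: the paragraph where you argue that $(C_1(\alpha),\beta)$ satisfies the compatibility condition at $\hbar=0_I$ is really a verification that $F(\sigma)$ is a morphism in $\mathbf{UBunC^*Alg}_{C_1(\mathcal{I}_1)}$, which belongs to Proposition~\ref{prop:functor_f} rather than to this proposition. Since $G$ has domain $\mathbf{UBunC^*Alg}_{C_1(\mathcal{I}_1)}$, its inputs are already bundle homomorphisms over the compactified base spaces, and the only thing you need for $G$ itself is that the base-space component sends $0_I$ to $0_J$ (so that (\ref{eq:sigma0_def}) coincides with the fiberwise map of Lemma~\ref{lem:fibmorph}). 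This doesn't affect correctness, but you could tighten the argument by keeping the $F$-well-definedness discussion separate.
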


This shows a sense in which the restriction of a bundle over $C_1(I)$ to the fiber over $0_I$ is natural given the structure of the category $\mathbf{UBunC^*Alg}_{C_1(\mathcal{I}_1)}$.  And finally, we can define a functor $L := G\circ F$, which provides a natural construction of the limiting C*-algebra.

\subsection{Dynamical structure}
\label{sec:dyn}

In this section, we show that the limiting construction is functorial even when we consider the further structure of dynamics.  We understand dynamics on a C*-algebra to be encoded in a one-parameter family of automorphisms.  Our construction of a limiting C*-algebra for a bundle allows us to obtain limiting dynamics, under the condition that the dynamics scales continuously with the limiting parameter, which we make precise as follows.

Suppose again that $\mathcal{A}_I = ((\mathfrak{A}_\hbar,\phi_\hbar^I)_{\hbar\in I},\mathfrak{A})$ is a $\mathit{UC}_b$ bundle of C*-algebras and allow $(\tau_{t;\hbar})_{t\in\mathbb{R}}$ to denote the dynamics on each fiber algebra $\mathfrak{A}_\hbar$. To enforce that these dynamics scale continuously with $\hbar$, we require that they lift to an automorphism group on the algebra of sections. That is, we require the existence of a one-parameter automorphism group $(\tau_{t})_{t\in\mathbb{R}}$ on $\mathfrak{A}$ satisfying
\begin{equation}\label{eq:dynamics_continuous_scale}
   \tau_{t;\hbar}\circ\phi_\hbar^I = \phi_\hbar^I\circ \tau_t\ \ \ \ \text{ for all } \hbar\in I,\ t\in\mathbb{R}.
\end{equation}
This is just the requirement that we are considering (in some sense) the ``same" dynamics at different scales.  We now define objects that contain such dynamics.

\begin{definition}
A \emph{dynamical bundle of C*-algebras} is a $\mathit{UC}_b$ bundle of C*-algebras $\mathcal{A}_I = ((\mathfrak{A}_\hbar,\phi_\hbar^I)_{\hbar\in I},\mathfrak{A})$ over a locally compact, non-compact metric space $I\in\mathcal{I}$ with a one-parameter group of automorphisms $(\tau^\mathcal{A}_t)_{t\in\mathbb{R}}$ on $\mathfrak{A}$.
\end{definition}
Further, we define a category of dynamical bundles of C*-algebras that encodes the extra structure of the dynamics.  For the moment, we return to the case of some generality and suppose only that $\mathcal{I}$ is a non-empty collection of locally compact metric spaces that serve as the base spaces of possible bundles, and that $I\mapsto (C(I),\alpha_I)$ is a functorial assignment of extended locally compact metric spaces $C(I)$ with dense embeddings $\alpha_I: I\to C(I)$.  As before, we now restrict attention to categories of dynamical bundles whose base spaces belong to this collection.

\begin{definition}
The category $\mathbf{DBunC^*Alg}_{\mathcal{I}}$ consists in:
\begin{itemize}
    \item \textit{objects}: dynamical bundles of C*-algebras $(\mathcal{A}_I,(\tau_t^\mathcal{A})_{t\in\mathbb{R}})$ with $I\in\mathcal{I}$;
    \item \textit{morphisms}: homomorphisms $\sigma = (\alpha,\beta):\mathcal{A}_I,\to\mathcal{B}_J$ between $\mathit{UC}_b$ bundles of C*-algebras with $I,J\in\mathcal{I}$, where $\alpha$ is proper and and for each $t\in\mathbb{R}$ \[\tau^\mathcal{B}_t\circ\beta = \beta\circ\tau^\mathcal{A}_t.\]
\end{itemize}
\end{definition}
\noindent Note that a morphism in $\mathbf{DBunC^*Alg}$ is just a morphism in $\mathbf{UBunC^*Alg}$ that furthermore preserves the structure of the dynamics.

We will first define an extension functor $F_D$, and then define a restriction functor $G_D$; the classical limit will again be understood as the composition $L_D = G_D\circ F_D$.

The extension functor $F_D:\mathbf{DBunC^*Alg}_{\mathcal{I}}\to\mathbf{DBunC^*Alg}_{C(\mathcal{I})}$ is defined to act on objects and morphisms as follows:
\begin{gather}\label{eq:functor_fd}
    \begin{aligned}
        \mathcal{A}_I = \left(\left(\mathfrak{A}_\hbar,\phi^I_\hbar\right)_{\hbar\in I},\mathfrak{A},(\tau^\mathcal{A}_t)_{t\in\mathbb{R}})\right) \, &\mapsto \, \left(F(\mathcal{A}_I),(\tau^\mathcal{A}_t)_{t\in\mathbb{R}} \right) \\
        \sigma \, &\mapsto \, F(\sigma).
    \end{aligned}
\end{gather}
$F_D$ sends a dynamical bundle $\mathcal{A}_I$ to its unique extension associated with $\alpha_I$ with the same dynamics; and $F_D$ extends morphisms just as $F$ does.  It is immediate that the extension of any dynamics-preserving morphism will also preserve dynamics, so that given a morphism $\sigma$ in $\mathbf{DBunC^*Al}g_{\mathcal{I}}$, $F(\sigma)$ is indeed a morphism in $\mathbf{DBunC^*Alg}_{C(\mathcal{I})}$.  This assignment is a functor by Proposition \ref{prop:functor_f}.

\begin{proposition}\label{prop:functor_F_D}
Given any functorial assignment $C$ of dense isometric embeddings $\alpha_I:I\to C(I)$ to locally compact metric spaces $I\in\mathcal{I}$, the assignment $F_D$ is a functor. 
\end{proposition}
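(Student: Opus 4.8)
The plan is to verify directly that $F_D$ satisfies the two functor axioms, leaning almost entirely on work already done. First I would check that $F_D$ is well-defined on objects: given a dynamical bundle $(\mathcal{A}_I,(\tau^\mathcal{A}_t)_{t\in\mathbb{R}})$ with $I\in\mathcal{I}$, Proposition \ref{prop:functor_f} already guarantees that $F(\mathcal{A}_I) = \tilde{\mathcal{A}}_{C(I)}$ is a $\mathit{UC}_b$ bundle over $C(I)\in C(\mathcal{I})$, and by the construction in the proof of Theorem \ref{thm:ex} its C*-algebra of sections is still $\mathfrak{A}$. Hence $(\tau^\mathcal{A}_t)_{t\in\mathbb{R}}$, being a one-parameter automorphism group on $\mathfrak{A}$, is already a valid choice of dynamics on the extended bundle, so $(F(\mathcal{A}_I),(\tau^\mathcal{A}_t)_{t\in\mathbb{R}})$ is a legitimate object of $\mathbf{DBunC^*Alg}_{C(\mathcal{I})}$. (Strictly, one may wish to record that the dynamics on the fibers of the extended bundle over accumulation points, obtained by pushing $\tau^\mathcal{A}_t$ through the new evaluation maps as in equation \eqref{eq:dynamics_continuous_scale}, is automatically well-defined because $\tau^\mathcal{A}_t$ descends to the quotient $\mathfrak{A}/\mathcal{K}_j$; but this is immediate since automorphisms preserve the norm and hence preserve each $\mathcal{K}_j$.)

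Next I would check that $F_D$ is well-defined on morphisms. Let $\sigma = (\alpha,\beta):\mathcal{A}_I\to\mathcal{B}_J$ be a morphism in $\mathbf{DBunC^*Alg}_\mathcal{I}$, so $\alpha$ is proper and $\tau^\mathcal{B}_t\circ\beta = \beta\circ\tau^\mathcal{A}_t$ for all $t$. By Proposition \ref{prop:functor_f}, $F(\sigma) = (C(\alpha),\beta)$ is a morphism in $\mathbf{UBunC^*Alg}_{C(\mathcal{I})}$, and $C(\alpha)$ is proper because the functorial assignment $C$ is defined only on proper maps. The key point is that the algebra component of $F(\sigma)$ is \emph{unchanged} — it is still $\beta$ — so the dynamics-compatibility identity $\tau^\mathcal{B}_t\circ\beta = \beta\circ\tau^\mathcal{A}_t$ that held for $\sigma$ holds verbatim for $F(\sigma)$. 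Hence $F(\sigma)$ is a morphism in $\mathbf{DBunC^*Alg}_{C(\mathcal{I})}$.

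Finally, the two functor axioms transfer for free: $F_D(\mathrm{id}_{\mathcal{A}_I}) = F(\mathrm{id}_{\mathcal{A}_I}) = \mathrm{id}_{F(\mathcal{A}_I)} = \mathrm{id}_{F_D(\mathcal{A}_I)}$, and for composable morphisms $\sigma,\sigma'$ we have $F_D(\sigma'\circ\sigma) = F(\sigma'\circ\sigma) = F(\sigma')\circ F(\sigma) = F_D(\sigma')\circ F_D(\sigma)$, both identities being inherited directly from Proposition \ref{prop:functor_f} since $F_D$ agrees with $F$ on the underlying $\mathit{UC}_b$-bundle data and simply carries the dynamics along unchanged. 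I do not anticipate any genuine obstacle here; the only point requiring a moment's care is the observation that $\tau^\mathcal{A}_t$ preserves each ideal $\mathcal{K}_j$ so that the dynamics genuinely lifts to the extended bundle's fibers — and this follows at once because any $*$-automorphism is isometric and $\mathcal{K}_j$ is defined purely in terms of the norms $\norm{\phi^I_{i_\delta}(\cdot)}_{i_\delta}$, which equal $\norm{\phi^I_{i_\delta}(\tau^\mathcal{A}_t(\cdot))}_{i_\delta}$ by \eqref{eq:dynamics_continuous_scale} together with the fact that each $\tau_{t;\hbar}$ is an automorphism of $\mathfrak{A}_\hbar$.
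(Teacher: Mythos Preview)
Your proposal is correct and follows exactly the approach the paper takes: the paper simply observes that since $F_D$ acts as $F$ on the underlying bundle data and leaves both $\beta$ and $(\tau^\mathcal{A}_t)_{t\in\mathbb{R}}$ unchanged, the dynamics-compatibility condition is preserved automatically and functoriality is inherited from Proposition~\ref{prop:functor_f}. Your write-up merely spells out in greater detail what the paper compresses into a sentence; the extra remark about $\tau^\mathcal{A}_t$ preserving each $\mathcal{K}_j$ is a pleasant aside but not needed for the proposition as stated, since a dynamical bundle only requires the one-parameter group to act on the section algebra $\mathfrak{A}$, which is literally unchanged under $F$.
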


Next, we define a restriction functor $G_D$.  We specify now that $C = C_1$ will be the one-point compactification, as above, and so $\mathcal{I} = \mathcal{I}_1$ will again consist in those locally compact, non-compact metric spaces whose embeddings in their one-point compactifications are isometric maps.  The codomain of our restriction functor will be a category of algebras of quantities of classical theories, now with dynamics, defined as follows.

\begin{definition}
The category $\mathbf{DC^*Alg}$ consists in:
\begin{itemize}
    \item \textit{objects}: pairs $(\mathfrak{A},(\tau^\mathfrak{A}_t)_{t\in\mathbb{R}})$, where $\mathfrak{A}$ is a C*-algebra and $(\tau^\mathfrak{A}_t)_{t\in\mathbb{R}}$ is a one-parameter group of automorphisms on $\mathfrak{A}$;
    \item \textit{morphisms}: morphisms $\sigma_0:(\mathfrak{A},(\tau_t^\mathfrak{A})_{t\in\mathbb{R}})\to(\mathfrak{B},(\tau_t^\mathfrak{B})_{t\in\mathbb{R}})$, each of which is a *-homomorphism $\sigma_0:\mathfrak{A}\to\mathfrak{B}$ such that for each $t\in\mathbb{R}$,
    \[\tau_t^\mathfrak{B}\circ\sigma_0 = \sigma_0\circ\tau_t^\mathfrak{A}.\]
\end{itemize}
\end{definition}
With this category in hand, we define the functor $G_D: \mathbf{DBunC^*Alg}_{C_1(\mathcal{I}_1)}\to\mathbf{DC^*Alg}$ as follows.  Given any dynamical bundle of C*-algebras $\mathcal{A}_I = \left(\left(\mathfrak{A}_\hbar,\phi^I_\hbar\right)_{\hbar\in I},\mathfrak{A},(\tau^\mathcal{A}_t)_{t\in\mathbb{R}}\right)$, one can restrict the dynamics to a fiber, defining for each $t\in\mathbb{R}$ an automorphism $\tau^\mathcal{A}_{t;\hbar}$ of $\mathfrak{A}_\hbar$ by
\begin{equation}
\label{eq:restdyn}
    \tau^\mathcal{A}_{t;\hbar}\!\left(\phi_\hbar^I(a)\right) := \phi_\hbar^I\!\left(\tau^\mathcal{A}_t(a)\right),
\end{equation}
for all $a\in\mathfrak{A}$.  We now use this definition in the case where the base space is $C_1(I)$ and we focus on the the fiber over $0_I\in C_1(I)$.    Note that the way we construct limiting dynamics $\tau_{t;0_I}^\mathcal{A}$ at $\hbar = 0_I$ is the same as the way that $G$ acts on morphisms because the dynamics are represented by *-homomorphisms (compare Equations (\ref{eq:sigma0_def}) and (\ref{eq:restdyn})).  In other words, we have $\tau_{t;0_I}^\mathcal{A} = G(\tau_t^\mathcal{A})$.  Hence, we define $G_D$ to act on objects by
\begin{gather}
\label{eq:fun_gd}
\begin{aligned}
    \tilde{A}_{C_1(I)} = \Big((\mathfrak{A}_\hbar,\phi^{C_1(I)}_\hbar)_{\hbar\in C_1(I)},\mathfrak{A},(\tau_t^\mathcal{A})_{t\in\mathbb{R}}\Big)&\mapsto \Big(\mathfrak{A}_{0_I},(\tau^\mathcal{A}_{t;0_I})_{t\in\mathbb{R}}\Big)\\
    \sigma & \mapsto G(\sigma)
\end{aligned}
\end{gather}
It follows immediately from  Proposition \ref{prop:functor_g} that for any morphism $\sigma$ in $\mathbf{DBunC^*Alg}_{C(\mathcal{I})}$, $G_D(\sigma)$ is a morphism in $\mathbf{DBunC^*Alg}$, so $G_D$ is well-defined and a functor.

\begin{proposition}\label{prop:ForgetG_D}
When $C_1$ is the one-point compactification, the assignment $G_D$ defined by (\ref{eq:fun_gd}) is a functor.
\end{proposition}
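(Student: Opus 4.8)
The plan is to verify the two functor axioms for $G_D$: that it preserves identities and respects composition, reducing each to the already-established functoriality of $G$ (Proposition \ref{prop:functor_g}) together with the observation that a morphism in $\mathbf{DBunC^*Alg}_{C_1(\mathcal{I}_1)}$ is just a morphism in $\mathbf{UBunC^*Alg}_{C_1(\mathcal{I}_1)}$ carrying the extra constraint $\tau^\mathcal{B}_t\circ\beta = \beta\circ\tau^\mathcal{A}_t$. So the real content is only to check that $G_D$ lands in $\mathbf{DC^*Alg}$, i.e., that $G(\sigma) = \sigma_0$ intertwines the restricted dynamics $(\tau^\mathcal{A}_{t;0_I})_{t\in\mathbb{R}}$ and $(\tau^\mathcal{B}_{t;0_J})_{t\in\mathbb{R}}$ on the fibers over the respective added points.

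First I would note that $G_D$ is well-defined on objects: a dynamical bundle over $I$ is sent to the pair $(\mathfrak{A}_{0_I},(\tau^\mathcal{A}_{t;0_I})_{t\in\mathbb{R}})$, where $\tau^\mathcal{A}_{t;0_I}$ is defined by Equation (\ref{eq:restdyn}) at $\hbar = 0_I$; this is a genuine automorphism of $\mathfrak{A}_{0_I}$ because $\tau^\mathcal{A}_t$ is an automorphism of $\mathfrak{A}$ that descends to the quotient $\mathfrak{A}/\mathcal{K}_{0_I} = \mathfrak{A}_{0_I}$ (one checks $\tau^\mathcal{A}_t$ preserves the ideal $\mathcal{K}_{0_I}$, which is immediate from the definition of $\mathcal{K}_{0_I}$ since $\tau^\mathcal{A}_t$ lifts the fiberwise automorphisms and hence preserves fiber norms), and the group law in $t$ is inherited. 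This also shows each $(\mathfrak{A}_{0_I},(\tau^\mathcal{A}_{t;0_I})_{t\in\mathbb{R}})$ is an object of $\mathbf{DC^*Alg}$.

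Next I would check $G_D$ is well-defined on morphisms. Let $\sigma = (\alpha, C_1(\alpha), \beta): (\mathcal{A}_I,(\tau^\mathcal{A}_t)) \to (\mathcal{B}_J,(\tau^\mathcal{B}_t))$ be a morphism in $\mathbf{DBunC^*Alg}_{C_1(\mathcal{I}_1)}$. By Proposition \ref{prop:functor_g}, $G(\sigma) = \sigma_0: \mathfrak{A}_{0_I}\to\mathfrak{B}_{0_J}$ is a well-defined $*$-homomorphism (using that $C_1(\alpha)$ sends $0_I$ to $0_J$ by the universal property of the one-point compactification). It remains to verify the intertwining relation $\tau^\mathcal{B}_{t;0_J}\circ\sigma_0 = \sigma_0\circ\tau^\mathcal{A}_{t;0_I}$. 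Since $\phi^{C_1(I)}_{0_I}$ is surjective, it suffices to evaluate both sides on $\phi^{C_1(I)}_{0_I}(a)$ for arbitrary $a\in\mathfrak{A}$: on the left, $\tau^\mathcal{B}_{t;0_J}(\sigma_0(\phi^{C_1(I)}_{0_I}(a))) = \tau^\mathcal{B}_{t;0_J}(\phi^{C_1(J)}_{0_J}(\beta(a))) = \phi^{C_1(J)}_{0_J}(\tau^\mathcal{B}_t(\beta(a)))$ by (\ref{eq:sigma0_def}) and (\ref{eq:restdyn}); on the right, $\sigma_0(\tau^\mathcal{A}_{t;0_I}(\phi^{C_1(I)}_{0_I}(a))) = \sigma_0(\phi^{C_1(I)}_{0_I}(\tau^\mathcal{A}_t(a))) = \phi^{C_1(J)}_{0_J}(\beta(\tau^\mathcal{A}_t(a)))$ again by (\ref{eq:restdyn}) and (\ref{eq:sigma0_def}). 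These agree precisely because $\tau^\mathcal{B}_t\circ\beta = \beta\circ\tau^\mathcal{A}_t$, which is part of the definition of a morphism in $\mathbf{DBunC^*Alg}_{C_1(\mathcal{I}_1)}$. Hence $G_D(\sigma)$ is a morphism in $\mathbf{DC^*Alg}$.

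Finally, functoriality of $G_D$ follows from that of $G$: $G_D(\mathrm{id}) = G(\mathrm{id}) = \mathrm{id}$ and $G_D(\sigma'\circ\sigma) = G(\sigma'\circ\sigma) = G(\sigma')\circ G(\sigma) = G_D(\sigma')\circ G_D(\sigma)$, since on the underlying $*$-homomorphisms $G_D$ acts exactly as $G$ (the dynamics play no role in composition). I do not anticipate a genuine obstacle here; the only point requiring slight care is confirming that $\tau^\mathcal{A}_t$ preserves the ideal $\mathcal{K}_{0_I}$ so that the restricted dynamics are well-defined, but this is forced by Equation (\ref{eq:dynamics_continuous_scale}) relating $\tau^\mathcal{A}_t$ to the fiberwise automorphisms, which preserve fiber norms.
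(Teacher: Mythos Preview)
Your proposal is correct and follows exactly the paper's approach: the paper's own argument is the single sentence ``It follows immediately from Proposition \ref{prop:functor_g} that for any morphism $\sigma$ in $\mathbf{DBunC^*Alg}_{C(\mathcal{I})}$, $G_D(\sigma)$ is a morphism in $\mathbf{DC^*Alg}$, so $G_D$ is well-defined and a functor,'' and you simply unpack this by verifying the intertwining relation $\tau^\mathcal{B}_{t;0_J}\circ\sigma_0 = \sigma_0\circ\tau^\mathcal{A}_{t;0_I}$ explicitly and checking the functor axioms reduce to those for $G$. One minor remark: your well-definedness check for $\tau^\mathcal{A}_{t;0_I}$ phrases things in terms of the specific ideal $\mathcal{K}_{0_I}$ from the extension construction, whereas $G_D$ is defined on arbitrary dynamical bundles over $C_1(I)$; but the same argument goes through with $\ker\phi^{C_1(I)}_{0_I}$ in place of $\mathcal{K}_{0_I}$, invoking Equation (\ref{eq:dynamics_continuous_scale}) as you do.
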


Finally, we can define the classical limit functor for dynamics as the composition of the extension functor and restriction functor.  Thus, we define $L_D:= G_D\circ F_D$. It follows immediately that $L_D$ is a functor, showing one sense in which the construction of limiting dynamics is natural.

We close this section with two remarks about the limiting dynamics determined by $L_D$.  First, it follows trivially that the limiting dynamics exists and is unique (up to morphism in $\mathbf{DC^*Alg}$).  Second, it follows from results in \cite{La98b} that the dynamics defined by certain classes of Hamiltonians in quantum theories determine by $L_D$ a corresponding classical Hamiltonian dynamics (For bounded Hamiltonians, this follows from Proposition 2.7.1, p.138; for Hamiltonians at most quadratic in $p$ and $q$, this follows from Corollary 2.5.2, p. 141).  Thus, the functor $L_D$ defines dynamics that are not just natural, but also reproduce the physical dynamics that one expects upon convergence in the $\hbar\to 0$ limit.

\subsection{Lie bracket}
\label{sec:lie}

In this section, we restrict attention to the classical limit of quantum theory, and so we leave behind other kinds of limits that may be captured by continuous bundles of C*-algebras.  We will thus restrict attention to the kinds of bundles used for quantization and the classical limit, but we here define a category that contains as objects slight generalizations of the bundles over $(0,1]$ generated by strict quantizations.  Our generalization allows one to include bundles over base spaces that are not themselves identified with $(0,1]$, i.e., possibly more abstract ``parameters".  We will show that in these kinds of bundles, the commutator Lie bracket of the fiber C*-algebras representing quantum theories functorially determines the Poisson bracket of the fiber C*-algebra representing a classical theory.

To define the relevant categories, we again restrict attention to $C_1$ as the one-point compactification and $\mathcal{I}_1$ as the collection of locally compact, non-compact topological spaces whose embedding in their one-point compactification is an isometric map.  For ease of notation, given a base space $I\in\mathcal{I}_1$, we let
\begin{equation}\label{eq:hbar_distance}
    \abs{\hbar}_I:= d_{C_1(I)}\!\left(\alpha_I(\hbar),0_I\right)
\end{equation}
denote the distance in the one-point compactification $C_1(I)= \dot{I}$ of $\hbar$ (as it is embedded by $\alpha_I$) from $0_I$.  For example, in the base space $I = (0,1]$ with $\dot{I} = [0,1]$, we have $\abs{\hbar}_I = \abs{\hbar - 0} = \hbar$ for each $\hbar\in I$.  Now we define a class of bundles generalizing those produced by strict quantizations, as follows.

\begin{definition}
\label{def:pqbun}
A $\mathit{UC}_b$ bundle of C*-algebras $\mathcal{A}_I = ((\mathfrak{A}_\hbar,\phi^I_\hbar)_{\hbar\in I},\mathfrak{A})$ over a locally compact, non-compact metric space $I\in\mathcal{I}$ is called a \emph{post-quantization bundle} if there is a subset $P_\mathcal{A}\subseteq \mathfrak{A}$ such that \begin{enumerate}[(i)]
\item $P_\mathcal{A}$ is norm dense in $\mathfrak{A}$;
\item for each pair $a,b\in P_\mathcal{A}$, there is a $c_{a,b}\in P_\mathcal{A}$ such that $\phi^I_\hbar(c_{a,b}) = \frac{i}{\abs{\hbar}_I}[\phi^I_\hbar(a),\phi^I_\hbar(b)]$ for all $\hbar\in I$; and
\item for each pair $a,b\in \mathfrak{A}$, $\lim_{\hbar\to 0_I}\norm{[\phi^I_\hbar(a),\phi^I_\hbar(b)]} = 0$.
\end{enumerate}
\end{definition}
\noindent Condition (iii) guarantees that the classical limit algebra $\mathfrak{A}_0$ is commutative.  On the other hand, conditions (i) and (ii) allow us to define a norm dense Poisson algebra of $\mathfrak{A}_0$.

Given a post-quantization bundle, we can define a Poisson bracket $\{\cdot,\cdot\}_\mathcal{A}$ on the dense subset $\phi_{0_I}^{C_1(I)}[P_\mathcal{A}]\subseteq \mathfrak{A}_0$ by
\begin{equation}
\label{eq:Poiss}
    \left\{\phi^{C_1(I)}_{0_I}(a),\phi^{C_1(I)}_{0_I}(b)\right\}_\mathcal{A} := \phi_0(c_{a,b})
\end{equation}
for every $a,b\in P_\mathcal{A}$.

We will make the structure of post-quantization bundles precise by defining a corresponding category of bundles.  To do so, we consider morphisms between bundles that preserve the relevant additional structure.  We will say that a morphism $\sigma = (\alpha,\beta)$ between post-quantization bundles $\mathcal{A}_I = ((\mathfrak{A}_\hbar,\phi_\hbar^I)_{\hbar\in I},\mathfrak{A},P_{\mathcal{A}})$ and $\mathcal{B}_J = ((\mathfrak{B}_\hbar,\phi_\hbar^J)_{\hbar\in J},\mathfrak{B},P_{\mathcal{B}})$ is \emph{smooth} if $\beta[P_{\mathcal{A}}] \subseteq P_{\mathcal{B}}$.  Since the subalgebra $P_\mathcal{A}$ is the collection of quantities whose commutators scale appropriately with $\hbar$, the smoothness condition guarantees that the image of these quantities under $\beta$ belongs to $P_\mathcal{B}$ and hence, have commutators that scale appropriately with $\hbar$ as well.  This is a pre-condition for the morphism to respect the scaling of commutators with $\hbar$, and for the classical limit of a morphism to preserve the Poisson bracket.

Further, we will say that a morphism $\sigma = (\alpha,\beta)$ between post-quantization bundles $\mathcal{A}_I = ((\mathfrak{A}_\hbar,\phi_\hbar^I)_{\hbar\in I},\mathfrak{A},P_{\mathcal{A}})$ and $\mathcal{B}_J = ((\mathfrak{B}_\hbar,\phi_\hbar^J)_{\hbar\in J},\mathfrak{B},P_{\mathcal{B}})$ is
\emph{second-order} if there is some constant $K\geq 0$ such that $\Big\vert\frac{1}{\abs{\hbar}_I} - \frac{1}{\abs{\alpha(\hbar)}_J}\Big\vert \rightarrow K$ as $\hbar\to 0_I$.  This is a generalization of the special case of most interest to us for the physics of the classical limit.  The special case we have in mind consists in bundles over the same base space $I = (0,1]$ (with $C_1(I) = [0,1]$ and $0_I = 0$) and with a re-scaling map $\alpha: [0,1]\to[0,1]$.  Consider an example where $\alpha$ has a power series expansion (in a neighborhood of $\hbar = 0$) of the form
\begin{align}
    \alpha(\hbar) = \sum_{n=0}^\infty a_n\hbar^n.
\end{align}
In this case, the name ``second-order" is meant to suggest that $\alpha$ agrees with the identity map up to first order in $\hbar$ and differs only for powers $n>2$.  In other words, consider the case where $a_0 = 0$ and $a_1 = 1$.  Then
\begin{align}
    \lim_{\hbar\to 0}\left\vert \frac{1}{\hbar} - \frac{1}{\alpha(\hbar)}\right\vert &=\lim_{\hbar\to 0}\left\vert \frac{\hbar + \sum_{n=2}^\infty a_n\hbar^n - \hbar}{\sum_{n=1}^\infty a_n\hbar^{n+1}}\right\vert = \lim_{\hbar\to 0}\left\vert \frac{a_2\hbar^2 + \sum_{n=3}^\infty a_n\hbar^n}{\hbar^2 + \sum_{n=2}^\infty a_n\hbar^{n+1}}\right\vert = \left\vert a_2\right\vert.
\end{align}
Hence, in this special case where $\alpha$ has the form
\begin{align}
     \alpha(\hbar) = \hbar + \sum_{n=2}^\infty a_n\hbar^n,
\end{align}
we know $\alpha$ is second-order according to our definition with $K = \abs{a_2}$.  A rough physical interpretation of the second-order condition is that the scaling behavior of $\hbar$ in $I$ agrees enough with $\alpha(\hbar)$ in $J$ in the limit as one approaches $0_I$ in $I$ and $0_J$ in $J$.  In other words, the condition requires that $\alpha$ preserve at least some metrical structure of the base spaces.

These definitions allow us to define a category that slightly generalizes the continuous bundles obtained from strict deformation quantizations.

\begin{definition}
The category $\mathbf{PQBunC^*Alg}_{\mathcal{I}_1}$ consists in:
\begin{itemize}
    \item \textit{objects}: post-quantization bundles whose base space $I$ belongs to $\mathcal{I}_1$.
    \item \textit{morphisms}: smooth, second-order morphisms of $\mathit{UC}_b$ bundles of C*-algebras whose map between base spaces is proper.
\end{itemize}
\end{definition}

\noindent As noted in \S\ref{sec:quant} and detailed in \ref{app:quanttobun}, one can construct a unique bundle of C*-algebras from any strict deformation quantization satisfying modest conditions.  $\mathbf{PQBunC^*Alg}_{\mathcal{I}_1}$ can be thought of as a subcategory of $\mathit{UC}_b$ bundles of C*-algebras determined by this construction.

Similarly, we define a category of classical theories obtained through the classical limit.  This category encodes additional Poisson structure of classical phase spaces.
\begin{definition}
The category $\mathbf{Class}$ consists in:
\begin{itemize}
    \item \textit{objects}: pairs $(\mathfrak{A}_0,(P^0_\mathcal{A},\{\cdot,\cdot\}_\mathcal{A}))$, where $\mathfrak{A}_0$ is a commutative C*-algebra and $P^0_\mathcal{A}$ is a norm-dense Poisson algebra with bracket $\{\cdot,\cdot\}_\mathcal{A}$;
    \item \textit{morphisms}: morphisms $\sigma_0: (\mathfrak{A}_0,(P^0_\mathcal{A},\{\cdot,\cdot\}_\mathcal{A}))\to (\mathfrak{B}_0,(P^0_\mathcal{B},\{\cdot,\cdot\}_\mathcal{B}))$, each of which is a *-homomorphism $\sigma_0: \mathfrak{A}_0\to\mathfrak{B}_0$ satisfying
    \begin{enumerate}[(i)]
    \item $\sigma_0[P^0_\mathcal{A}]\subseteq P^0_\mathcal{B}$; and \item the restriction of $\sigma_0$ to $P^0_\mathcal{A}$ is a Poisson morphism, i.e., for each $A,B\in P^0_\mathcal{A}$,
    \begin{align*}
        \{\sigma_0(A),\sigma_0(B)\}_\mathcal{B} = \sigma_0\!\left(\{A,B\}_\mathcal{A}\right).
    \end{align*}
    \end{enumerate}
\end{itemize}
\end{definition}
\noindent The restrictions on morphisms in $\mathbf{Class}$ again guarantee that smooth quantites get mapped to smooth quantities, and that the morphisms define the further classical structure of the Poisson bracket on the phase space.  The canonical example of an object in $\mathbf{Class}$ is a pair $(C_0(M),(C_c^\infty(M),\{\cdot,\cdot\})$ for a Poisson manifold $(M,\{\cdot,\cdot\})$.

We define a classical limit functor $L_P$ analogous to $L$.  Define the functor $L_P$ from  $\mathbf{PQBunC^*Alg}_{\mathcal{I}_1}$ to $\mathbf{Class}$ on objects and arrows by
\begin{gather}\label{eq:functor_lp}
    \begin{aligned}
        \mathcal{A}_I = \left(\left(\mathfrak{A}_\hbar,\phi^I_\hbar\right)_{\hbar\in I},\mathfrak{A},P_\mathcal{A}\right) \, &\mapsto \, \left(L(\mathcal{A}_I),\left(\phi^{C_1(I)}_{0_I}[P_\mathcal{A}],\{\cdot,\cdot\}_\mathcal{A}\right)\right) \\
        \sigma \, &\mapsto \, L(\sigma) = \sigma_0,
    \end{aligned}
\end{gather}
where $\{\cdot,\cdot\}_\mathcal{A}$ is defined by (\ref{eq:Poiss}).  As mentioned, it follows trivially that the Poisson bracket $\{\cdot,\cdot\}_\mathcal{A}$ both exists and is the unique Lie bracket (up to morphisms in $\mathbf{Class}$) on the fiber $\mathfrak{A}_0$ that continuously extends the commutator for $\hbar >0$.  The following guarantees $L_P$ is well-defined and a functor.

\begin{proposition}
\label{prop:Poisson}
Suppose $\mathcal{A}_I$ and $\mathcal{B}_J$ are objects in $\mathbf{PQBunC^*Alg}_{\mathcal{I}_1}$ and $\sigma:\mathcal{A}_I\to\mathcal{B}_J$ is a morphism (i.e., smooth and second-order).  Then $L_P(\sigma)$ is a morphism in $\mathbf{Class}$ (i.e., restricts to a Poisson morphism).  Hence, $L_P$ is a functor.
\end{proposition}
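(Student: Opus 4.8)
The plan is to verify the two conditions defining a morphism in $\mathbf{Class}$ for $L_P(\sigma) = \sigma_0 = G\circ F(\sigma)$, namely that $\sigma_0$ maps $\phi^{C_1(I)}_{0_I}[P_\mathcal{A}]$ into $\phi^{C_1(J)}_{0_J}[P_\mathcal{B}]$ and that it is a Poisson morphism on that dense subalgebra; functoriality of $L_P$ then follows immediately from the functoriality of $L = G\circ F$ (Propositions \ref{prop:functor_f} and \ref{prop:functor_g}), since $L_P$ agrees with $L$ on the underlying C*-algebraic data and the extra Poisson data is carried along in the obvious way. First I would record that since $\sigma = (\alpha,\beta)$ is smooth, $\beta[P_\mathcal{A}]\subseteq P_\mathcal{B}$, and since by Equation (\ref{eq:sigma0_def}) we have $\sigma_0(\phi^{C_1(I)}_{0_I}(a)) = \phi^{C_1(J)}_{0_J}(\beta(a))$, condition (i) is immediate: $\sigma_0(\phi^{C_1(I)}_{0_I}(a)) = \phi^{C_1(J)}_{0_J}(\beta(a)) \in \phi^{C_1(J)}_{0_J}[P_\mathcal{B}]$ for $a\in P_\mathcal{A}$.

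The substantive work is condition (ii). Fix $a,b\in P_\mathcal{A}$ with images $A = \phi^{C_1(I)}_{0_I}(a)$, $B = \phi^{C_1(I)}_{0_I}(b)$. By (\ref{eq:Poiss}), $\{A,B\}_\mathcal{A} = \phi^{C_1(I)}_{0_I}(c_{a,b})$ where $c_{a,b}\in P_\mathcal{A}$ satisfies $\phi^I_\hbar(c_{a,b}) = \frac{i}{|\hbar|_I}[\phi^I_\hbar(a),\phi^I_\hbar(b)]$ for all $\hbar\in I$. Applying $\sigma_0$ and using (\ref{eq:sigma0_def}) gives $\sigma_0(\{A,B\}_\mathcal{A}) = \phi^{C_1(J)}_{0_J}(\beta(c_{a,b}))$. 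On the other hand, $\{\sigma_0(A),\sigma_0(B)\}_\mathcal{B} = \{\phi^{C_1(J)}_{0_J}(\beta(a)),\phi^{C_1(J)}_{0_J}(\beta(b))\}_\mathcal{B} = \phi^{C_1(J)}_{0_J}(c_{\beta(a),\beta(b)})$, where $c_{\beta(a),\beta(b)}\in P_\mathcal{B}$ satisfies $\phi^J_{\hbar'}(c_{\beta(a),\beta(b)}) = \frac{i}{|\hbar'|_J}[\phi^J_{\hbar'}(\beta(a)),\phi^J_{\hbar'}(\beta(b))]$ for all $\hbar'\in J$. So I must show $\phi^{C_1(J)}_{0_J}(\beta(c_{a,b})) = \phi^{C_1(J)}_{0_J}(c_{\beta(a),\beta(b)})$, i.e., that $\beta(c_{a,b}) - c_{\beta(a),\beta(b)} \in \mathcal{K}_{0_J}$. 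Using Lemma \ref{lem:fibmorph} and the compatibility of $\beta$ with the fiber maps (which gives $\phi^J_{\alpha(\hbar)}(\beta(x)) = \sigma_\hbar(\phi^I_\hbar(x))$ as a $*$-homomorphism between fibers), I would evaluate $\phi^J_{\alpha(\hbar)}$ on both terms along a net $\hbar\to 0_I$: the first gives $\frac{i}{|\hbar|_I}$ times a commutator of fiber elements, the second gives $\frac{i}{|\alpha(\hbar)|_J}$ times the \emph{same} commutator (since $\sigma_\hbar$ is a $*$-homomorphism it commutes with forming commutators). Their difference in norm is therefore bounded by $\bigl|\frac{1}{|\hbar|_I} - \frac{1}{|\alpha(\hbar)|_J}\bigr| \cdot \norm{[\phi^I_\hbar(a),\phi^I_\hbar(b)]}$.

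The main obstacle — and the only place the hypotheses genuinely bite — is controlling this product in the limit $\hbar\to 0_I$. Here I invoke the two remaining structural facts: $\sigma$ is \emph{second-order}, so $\bigl|\frac{1}{|\hbar|_I} - \frac{1}{|\alpha(\hbar)|_J}\bigr| \to K$ for some finite $K\geq 0$; and $\mathcal{A}_I$ is a post-quantization bundle, so by Definition \ref{def:pqbun}(iii), $\norm{[\phi^I_\hbar(a),\phi^I_\hbar(b)]}\to 0$ as $\hbar\to 0_I$ (this is where commutativity of the limit fiber is used). The product of a quantity converging to $K$ with a quantity converging to $0$ converges to $0$, so $\lim_\delta \norm{\phi^J_{\alpha(i_\delta)}(\beta(c_{a,b}) - c_{\beta(a),\beta(b)})}_{\alpha(i_\delta)} = 0$; moreover one must check this holds for \emph{every} net $\hbar_\delta$ with $\alpha(\hbar_\delta)\to 0_J$, but since $\alpha$ is proper and $0_J\notin\alpha[I]$, any such net pushes forward from a net $i_\delta\to 0_I$ in $I$ (using that $C_1$ is the one-point compactification, so $0_J$ has no preimage and $\alpha(i_\delta)\to 0_J$ forces $i_\delta$ to escape every compact set, i.e. $i_\delta\to 0_I$), and the uniform-continuity / unique-limit apparatus of Lemma \ref{lem:quotient_1} guarantees the limit is net-independent. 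Hence $\beta(c_{a,b}) - c_{\beta(a),\beta(b)}\in\mathcal{K}_{0_J}$, establishing condition (ii). With both conditions verified, $L_P(\sigma)$ is a morphism in $\mathbf{Class}$; functoriality (identity and composition) is inherited from $L$ together with the evident compatibility of the smoothness condition with composition, completing the proof.
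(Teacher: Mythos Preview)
Your proposal is correct and follows essentially the same approach as the paper's proof: both reduce the Poisson morphism condition to showing that $\phi^{C_1(J)}_{0_J}\bigl(\beta(c_{a,b}) - c_{\beta(a),\beta(b)}\bigr) = 0$, evaluate the difference along a net $\alpha(\hbar)\to 0_J$, and factor it as $\bigl|\tfrac{1}{|\hbar|_I}-\tfrac{1}{|\alpha(\hbar)|_J}\bigr|$ times a commutator norm, concluding via the second-order condition and Definition~\ref{def:pqbun}(iii). Your version is slightly more explicit---you verify condition (i) separately, bound by the $\mathfrak{A}$-side commutator norm rather than the $\mathfrak{B}$-side (valid since $\sigma_\hbar$ is contractive), and address net-independence---but these are elaborations rather than a different argument.
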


\begin{proof}
Suppose $\sigma: \mathcal{A}_I\to\mathcal{B}_J$ is a morphism in $\mathbf{PQBunC^*Alg}$, where $\sigma = (\alpha,\beta)$.  We need to show that for each $a,b\in P_\mathcal{A}$,
    \[\left\{L_P(\sigma)\!\left(\phi^I_{0_I}(a)\right),L_P(\sigma)\!\left(\phi^I_{0_I}(b)\right)\right\} = L_P(\sigma)\!\left(\left\{\phi^I_{0_I}(a),\phi^I_{0_I}(b)\right\}\right).\]
But, by the definition, we have that the right hand side is
\begin{align*}
    L_P(\sigma)\!\left(\left\{\phi^I_{0_I}(a),\phi^I_{0_I}(b)\right\}\right) = L_P(\sigma)\!\left(\phi^I_{0_I}(c_{a,b})\right) = \phi^J_{0_J}\!\left(\beta(c_{a,b})\right),
\end{align*}
and the left hand side is
\begin{align*}
    \left\{L_P(\sigma)\!\left(\phi^I_{0_I}(a)\right),L_P(\sigma)\!\left(\phi^I_{0_I}(b)\right)\right\} = \left\{\phi^J_{0_J}(\beta(a)),\phi^J_{0_J}(\beta(b))\right\} = \phi^J_{0_J} \!\left(c_{\beta(a),\beta(b)}\right).
\end{align*}
Moreover, comparing these two values, we find
\begin{align*}
    &\left\Vert\phi^J_{0_J}\!\left(\beta(c_{a,b})\right) - \phi^J_{0_J}\!\left(c_{\beta(a),\beta(b)}\right)\right\Vert_{0_J} \\
    &\quad= \lim_{\hbar\to 0_I}\left\Vert\phi^J_{\alpha(\hbar)}\!\left(\beta(c_{a,b})\right) - \phi^J_{\alpha(\hbar)}\!\left(c_{\beta(a),\beta(b)}\right)\right\Vert_{\alpha(\hbar)}\\
    &\quad= \lim_{\hbar\to 0_I} \left\Vert\frac{i}{\abs{\hbar}_I}\phi^J_{\alpha(\hbar)}\!\left(\beta([a,b])\right) - \frac{i}{\abs{\alpha(\hbar)}_J}\left[\phi^J_{\alpha(\hbar)}(\beta(a)),\phi^J_{\alpha(\hbar)}(\beta(b))\right]\right\Vert_{\alpha(\hbar)}\\
    &\quad= \lim_{\hbar\to 0_I}\left\Vert\frac{i}{\abs{\hbar}_I}\phi^J_{\alpha(\hbar)}(\beta([a,b])) - \frac{i}{\abs{\alpha(\hbar)}_J}\phi^J_{\alpha(\hbar)}(\beta([a,b]))\right\Vert_{\alpha(\hbar)}\\
    &\quad= \lim_{\hbar\to 0_I}\left\vert\frac{1}{\abs{\hbar}_I} - \frac{1}{\abs{\alpha(\hbar)}_J}\right\vert \left\Vert\phi^J_{\alpha(\hbar)}(\beta([a,b]))\right\Vert_{\alpha(\hbar)}
    = 0.
\end{align*}
\end{proof}

\noindent This shows that the construction of the classical Poisson bracket from the Lie bracket commutator is also, in a sense, natural.

\section{Conclusion}

In this paper, we have shown that the bundles of C*-algebras used to represent the limits of physical theories dependent on a parameter can be uniquely extended to limiting values of that parameter.  Moreover, by defining morphisms between bundles of C*-algebras, we have shown the functoriality of the determination of limiting C*-algebraic structure, dynamical structure, and Lie brackets in the case of the classical limit.

We remark on two avenues for further investigation.  First, while our results on limiting dynamics apply to a class of Hamiltonians, it is not clear whether they apply to all physically interesting dynamics.  It might be of particular interest to analyze whether or when chaotic dynamics fall under the purview of our results in \S\ref{sec:dyn}.  Second, while our results show how to take the limits of morphisms between bundles of C*-algebras, we dealt only with morphisms that preserve at least as much structure as *-homomorphisms.  It might be of interest to investigate whether one can also determine limits for weaker morphisms, such as the Morita equivalences that \cite{La02, La02a} uses in his investigation of functorial quantization.

\section*{Acknowledgements}

We have removed acknowledgements for the sake of anonymity.


\appendix

\section{From Quantization to Bundles}
\label{app:quanttobun}

We remarked in \S\ref{sec:uniform} that strict deformation quantizations determine continuous bundles of C*-algebras, and that equivalent quantizations determine the \emph{same} continuous bundles.  We now analyze the sense in which this holds using category-theoretic tools.

First, we outline the construction of a $C_0$ bundle of C*-algebras from a strict deformation quantization $((\mathfrak{A}_\hbar,\mathcal{Q}_\hbar)_{\hbar\in I},\mathcal{P})$.  Here, we stick with $C_0$ bundles of C*-algebras to align our discussion with the literature, but the results of \ref{app:equivdefs} show that one could pick any of the equivalent definitions.  The idea is to define a (unique) C*-algebra of vanishingly continuous sections, which we will denote $C^*(\mathcal{Q})\subseteq \prod_{\hbar\in I}\mathfrak{A}_\hbar$, satisfying the condition that for each $A\in\mathcal{P}$, there is an $a\in C^*(\mathcal{Q})$ such that $a(\hbar) = \mathcal{Q}_\hbar(A)$.  To that end, we define the smallest *-subalgebra $\mathcal{Q}\subseteq\prod_{\hbar\in I}\mathfrak{A}_\hbar$ containing each such $a = [\hbar\mapsto \mathcal{Q}_\hbar(A)]$ as follows
\begin{align*}
   & \mathcal{Q} = \text{span}_\mathbb{C}\left(\left\{ a\in \prod_{\hbar\in I}\mathfrak{A}_\hbar\ \middle|\ a = [\hbar\mapsto \mathcal{Q}_\hbar(A_1)\cdot...\cdot\mathcal{Q}_\hbar(A_n)]\text{ for some $A_1,...,A_n\in\mathcal{P}$}\right\}\right).
\end{align*}
(Here, we explicitly reference the scalars $\mathbb{C}$ in the linear span, denoted $\text{span}_\mathbb{C}$, to distinguish from what follows.) It should be clear that for this construction to work, we need to restrict attention to strict quantizations satisfying the condition that $\hbar\mapsto \norm{\mathcal{Q}_\hbar(F)}_\hbar$ is continuous for every polynomial $F$ of classical quantities $A_1,...,A_n\in\mathcal{P}$.  Further, to ensure the collection of vanishingly continuous sections $C^*(\mathcal{Q})$ satisfies the vanishing completeness condition, we must also include each section that differs in norm from all elements of $\mathcal{Q}$ by a continuous function vanishing at infinity, as follows:
\begin{align*}
    C^*(\mathcal{Q}) = \left\{a'\in\prod_{\hbar\in I}\mathfrak{A}_\hbar\ \middle |\ [\hbar\mapsto\norm{a'(\hbar) - a(\hbar)}_\hbar]\in C_0(I)\text{ for each $a\in\text{span}_\mathbb{C} \mathcal{Q}$}\right\}
\end{align*}
\cite[Theorem 1.2.3-4, p. 111]{La98b} shows that this construction produces a unique $C_0$ bundle of C*-algebras $((\mathfrak{A}_\hbar, \phi_\hbar^I)_{\hbar\in I}, \mathfrak{A}^0)$ by defining
\begin{align*}
    &\mathfrak{A}^0 = C^*(\mathcal{Q})\\
    &\phi_\hbar^I(a) = a(\hbar)\ \text{ for each $a\in C^*(\mathcal{Q})$}.
\end{align*}

We would like to understand the construction of a bundle of C*-algebras from a strict deformation quantization as functorial.  To that end, we define a category of strict deformation quantizations that encodes the structure needed for the construction.

\begin{definition}
The category $\mathbf{SDQuant}$ consists in:
\begin{itemize}
    \item \textit{objects}: strict deformation quantizations $\mathcal{A}_I^D= ((\mathfrak{A}_\hbar,\mathcal{Q}^A_\hbar)_{\hbar\in I}, \mathcal{P}_A)$, where $I\subseteq\mathbb{R}$ is locally compact and contains $0$, and $\hbar\mapsto \norm{\mathcal{Q}_\hbar(F)}_\hbar$ is continuous for every polynomial $F$ of classical quantities $A_1,...,A_n\in\mathcal{P}$.
    \item \textit{morphisms}: a morphism $\sigma : \mathcal{A}_I^D\to\mathcal{B}_J^D$ between strict deformation quantizations $\mathcal{A}_I^D = ((\mathfrak{A}_\hbar,\mathcal{Q}_\hbar^A)_{\hbar\in I}, \mathcal{P}_A)$ and $\mathcal{B}_J^D = ((\mathfrak{B}_\hbar,\mathcal{Q}_\hbar^B)_{\hbar\in I}, \mathcal{P}_B)$ consists in a triple
    \begin{align*}
        \sigma = \left (\alpha,\left (\sigma_\hbar \right)_{\hbar\in I}, \rho \right),
    \end{align*}
where $\alpha: I\to J$ is a metric map such that $\alpha(0) = 0$, the map $\rho: C_0(I)\to C_0(J)$ is a *-homomorphism, and for each $\hbar\in I$, $\sigma_\hbar: \mathfrak{A}_\hbar\to\mathfrak{B}_{\alpha(\hbar)}$ is a *-homomorphism such that $\sigma_0[\mathcal{P}_A]\subseteq\mathcal{P}_B$ and $\sigma_{0|\mathcal{P}_A}$ is a Poisson morphism.  Moreover, we require that these maps satisfy
\begin{align*}
    \mathcal{Q}_{\alpha(\hbar)}^B(\sigma_0(A)) &= \sigma_\hbar \left(\mathcal{Q}_\hbar^A(A)\right)\\
    \rho(f)(\alpha(\hbar)) &= f(\hbar).
\end{align*}
for each $A\in \mathcal{P}_A$ and $f\in C_0(I)$.
\end{itemize}
\end{definition}

We should pause to make one remark concerning the additional map $\rho: C_0(I)\to C_0(J)$ in our definition of a morphism.  To motivate the inclusion of $\rho$ as part of a structure-preserving map, we note the importance of the structure $C_0(I)$ in this construction of a $C_0$ bundle.  As \cite{La02a} remarks, the C*-algebra of sections of a $C_0$ bundle of C*-algebras can be thought of as a $C_0(I)$-algebra in a canonical way, and this structure is sufficient to determine the bundle (see \cite[p. 737-8]{La17} and \cite{Ni96}).  Along with this, one can characterize the sections determined by a quantization map directly in terms of the action of $C_0(I)$.

\begin{proposition}
$C^*(\mathcal{Q}) = \overline{\mathrm{span}_{C_0(I)}(\mathcal{Q})}$, where the overline denotes the closure under the supremum norm in $\prod_{\hbar\in I}\mathfrak{A}_\hbar$.
\end{proposition}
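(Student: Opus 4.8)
The plan is to show the two sets $C^*(\mathcal{Q})$ and $\overline{\mathrm{span}_{C_0(I)}(\mathcal{Q})}$ coincide by proving each is contained in the other. Both sets live inside $\prod_{\hbar\in I}\mathfrak{A}_\hbar$ equipped with the supremum norm, and $C^*(\mathcal{Q})$ was defined precisely as the set of sections $a'$ whose pointwise norm-distance $\hbar\mapsto\|a'(\hbar)-a(\hbar)\|_\hbar$ lies in $C_0(I)$ for every $a\in\mathrm{span}_\mathbb{C}\mathcal{Q}$, so the proof is mostly a matter of unwinding definitions and invoking the vanishing-completeness and vanishing-continuity structure already in hand.

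First I would prove $\overline{\mathrm{span}_{C_0(I)}(\mathcal{Q})}\subseteq C^*(\mathcal{Q})$. For this it suffices (since $C^*(\mathcal{Q})$ is norm-closed, being the preimage of the closed set $C_0(I)$ under a continuous map, or more directly by an $\epsilon/2$ argument exactly like the one in Lemma \ref{lem:quotient_1}) to show that every element of $\mathrm{span}_{C_0(I)}(\mathcal{Q})$ lies in $C^*(\mathcal{Q})$. A generic such element has the form $b = \sum_{k=1}^n f_k a_k$ with $f_k\in C_0(I)$ and $a_k\in\mathcal{Q}$; here $f_k a_k$ is the section $\hbar\mapsto f_k(\hbar)a_k(\hbar)$. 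Given any $a\in\mathrm{span}_\mathbb{C}\mathcal{Q}$, the function $\hbar\mapsto\|b(\hbar)-a(\hbar)\|_\hbar$ is dominated by $\sum_k\|f_k a_k(\hbar)\|_\hbar + \|a(\hbar)\|_\hbar \le \sum_k|f_k(\hbar)|\,\|a_k(\hbar)\|_\hbar + \|a(\hbar)\|_\hbar$; the first sum is in $C_0(I)$ because each $|f_k|\in C_0(I)$ and each $\|a_k(\cdot)\|$ is bounded continuous (by the standing continuity hypothesis on polynomials of quantization maps), while $\|a(\cdot)\|$ is bounded. But that shows $b$ itself, up to an element of $\mathrm{span}_\mathbb{C}\mathcal{Q}$, differs by something in $C_0(I)$ from every $a$ — actually the clean way to say it: take $a=0$ won't work since $\|b(\cdot)\|$ need not vanish, so instead one checks directly that $\hbar\mapsto\|b(\hbar)-a(\hbar)\|_\hbar\in C_0(I)$ for all $a\in\mathrm{span}_\mathbb{C}\mathcal{Q}$, which follows because $b-a\in\mathrm{span}_{C_0(I)}(\mathcal{Q})$ again and the norm of any $\sum_k g_k c_k$ with $g_k\in C_0(I)$, $c_k\in\mathcal{Q}$ is bounded pointwise by $\sum_k|g_k|\|c_k(\cdot)\|\in C_0(I)$.

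Next I would prove the reverse inclusion $C^*(\mathcal{Q})\subseteq\overline{\mathrm{span}_{C_0(I)}(\mathcal{Q})}$. This is the substantive direction. Take $a'\in C^*(\mathcal{Q})$; by definition, for the element $a=0\in\mathrm{span}_\mathbb{C}\mathcal{Q}$ we at least know nothing useful, so instead fix $\epsilon>0$ and pick any $a\in\mathrm{span}_\mathbb{C}\mathcal{Q}$ (for instance by density one can approximate $a'$ well by some $a$ — but wait, that is exactly what we are trying to prove). The right move is: by hypothesis $\hbar\mapsto\|a'(\hbar)-a(\hbar)\|_\hbar\in C_0(I)$ for \emph{every} $a\in\mathrm{span}_\mathbb{C}\mathcal{Q}$; choosing $a$ appropriately and then using a partition-of-unity / Tietze-type argument on the locally compact space $I$ together with vanishing completeness, one builds a $C_0(I)$-combination of elements of $\mathcal{Q}$ that approximates $a'$ in supremum norm. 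Concretely, since $I$ is locally compact and the discrepancy functions vanish at infinity, one can cover the relevant compact region by finitely many opens on which some fixed $a\in\mathrm{span}_\mathbb{C}\mathcal{Q}$ approximates $a'$ to within $\epsilon$, glue with a $C_0$ partition of unity $\{f_k\}$, and observe $\sum_k f_k a_k\in\mathrm{span}_{C_0(I)}(\mathcal{Q})$ is within $\epsilon$ of $a'$; outside that region everything is small anyway. Letting $\epsilon\to0$ gives $a'\in\overline{\mathrm{span}_{C_0(I)}(\mathcal{Q})}$.

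The main obstacle is this second inclusion: one must leverage the defining property of $C^*(\mathcal{Q})$ — closeness to $\mathrm{span}_\mathbb{C}\mathcal{Q}$ measured by $C_0(I)$ — to produce a genuine $C_0(I)$-module approximation, which requires the local-compactness of $I$ (for partitions of unity in $C_0(I)$) and the vanishing-completeness axiom (to know $f a\in C^*(\mathcal{Q})$ whenever $f\in C_0(I)$, $a\in\mathcal{Q}$). Everything else is bookkeeping with submultiplicativity of the fiber norms and the fact that $\mathrm{span}_\mathbb{C}\mathcal{Q}\subseteq\mathrm{span}_{C_0(I)}(\mathcal{Q})$. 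I expect this result is essentially Landsman's; citing \cite[Theorem 1.2.3-4]{La98b} may shortcut part of the argument, but a self-contained proof runs as above.
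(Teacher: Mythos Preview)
Your overall plan (double inclusion) matches the paper's, but both directions in your self-contained attempt have issues, and the paper in fact takes a much shorter route by leaning on Landsman.

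For the inclusion $\overline{\mathrm{span}_{C_0(I)}(\mathcal{Q})}\subseteq C^*(\mathcal{Q})$, your argument ultimately rests on the claim that $\mathrm{span}_\mathbb{C}\mathcal{Q}\subseteq \mathrm{span}_{C_0(I)}(\mathcal{Q})$ (so that $b-a$ is again a $C_0(I)$-combination). This is false whenever $I$ is non-compact: constant scalars are not in $C_0(I)$, so a $\mathbb{C}$-linear combination of elements of $\mathcal{Q}$ is generally \emph{not} a $C_0(I)$-combination. Your earlier attempt via the triangle inequality also fails for the same reason: the term $\|a(\cdot)\|_\hbar$ is bounded and continuous but need not lie in $C_0(I)$ on the nose. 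The paper avoids this entirely by observing that Landsman's theorem (already invoked in the setup) makes $C^*(\mathcal{Q})$ the algebra of vanishingly continuous sections of a $C_0$ bundle; vanishing completeness then gives $fa\in C^*(\mathcal{Q})$ for $f\in C_0(I)$ and $a\in\mathcal{Q}$, and norm-closedness of $C^*(\mathcal{Q})$ finishes the job in one line.

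For the inclusion $C^*(\mathcal{Q})\subseteq \overline{\mathrm{span}_{C_0(I)}(\mathcal{Q})}$, your sketch has the right flavor (local approximation plus a $C_0$ partition of unity), but you never explain how the defining property of $C^*(\mathcal{Q})$ yields, for a given $\hbar_0$ and $\epsilon$, an element $a\in\mathrm{span}_\mathbb{C}\mathcal{Q}$ that approximates $a'$ to within $\epsilon$ near $\hbar_0$; you flag this yourself (``that is exactly what we are trying to prove'') and then wave at ``choosing $a$ appropriately'' without saying how. The paper does not attempt a self-contained argument here: it simply cites \cite[Lemma~1.2.2]{La17}, which carries out precisely this local-approximation-plus-gluing step. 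Your final remark anticipating a citation to Landsman is correct; that is exactly what the paper does, and it is the cleanest way to close both gaps.
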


\begin{proof}
We prove equality by proving ($\subseteq$) and ($\supseteq$).

($\subseteq$) Let $a\in C^*(\mathcal{Q})$.  The argument in Lemma 1.2.2 of \cite[p. 110]{La17} implies $a\in \overline{\mathrm{span}_{C_0(I)}(\mathcal{Q})}$.

($\supseteq$) Let $a\in\overline{\mathrm{span}_{C_0(I)}(\mathcal{Q})}$.  Then there is a sequence $a_n\in \mathrm{span}_{C_0(I)}(\mathcal{Q})$ such that $a_n\to a$ in norm as $n\to \infty$.  But each $a_n\in C^*(\mathcal{Q})$ since $C^*(\mathcal{Q})$ forms the algebra of vanishingly continuous sections of a bundle. Hence, since $C^*(\mathcal{Q})$ is norm closed, $a\in C^*(\mathcal{Q})$.
\end{proof}
\noindent Hence, there is reason to think of the action of $C_0(I)$ on continuous sections as an essential part of the structure of a strict deformation quantization, at least as it is used to construct a bundle of C*-algebras.  We encode this action in the structure of our category by requiring structure-preserving morphisms of quantizations to preserve the action of $C_0(I)$.

Now we define a functor $H: \mathbf{SDQuant}\to \mathbf{VBunC^*Alg}$ that acts on objects and moprhism by
\begin{align*}
    \left(\left(\mathfrak{A}_\hbar,\mathcal{Q}_\hbar^A\right)_{\hbar\in I},\mathcal{P}_A\right)\, &\mapsto \, \left(\left(\mathfrak{A}_\hbar,\phi^I_\hbar\right),C^*\hspace{-.3em}\left(\mathcal{Q^A}\right)\right) \\
    \left(\alpha,\left(\sigma_\hbar\right)_{\hbar\in I},\rho\right) \, &\mapsto \, (\alpha,\beta),
\end{align*}
where $\beta: C^*(\mathcal{Q}^A)\to C^*(\mathcal{Q}^B)$ is the unique linear continuous extension of
\begin{align*}
    \beta \left( \left[\hbar\mapsto f(\hbar)\mathcal{Q}^A_\hbar(A_1)\cdot...\cdot\mathcal{Q}^A_\hbar(A_n)\right] \right) = \left [\hbar\mapsto \rho(f)(\hbar)\mathcal{Q}^B_\hbar(\sigma_0(A_1))\cdot...\cdot\mathcal{Q}_\hbar^B(\sigma_0(A_n)) \right]
\end{align*}
for $f\in C_0(I)$ and $A_1,...,A_n\in\mathcal{P}_A$. Indeed, it follows from the definitions that $(\alpha,\beta)$ is a morphism in $\mathbf{VBunC^*Alg}$ because for each $\hbar\in I$ and section $a\in C^*(\mathcal{Q}^A)$, we know $\sigma_\hbar(\phi^I_\hbar(a)) = \phi_{\alpha(\hbar)}^J(\beta(a))$ and $\sigma_\hbar$ is well-defined.  Hence, with this definition, $H$ is a functor.

\section{Continuity and Uniform Continuity}
\label{app:equivdefs}

Each of our two definitions of bundles of C*-algebras in \S\ref{sec:quant} and \S\ref{sec:uniform} comes with an associated category of models.  We can use these categories $\mathbf{VBunC^*Alg}$ and $\mathbf{UBunC^*Alg}$ to analyze the relationship between these definitions.  We will prove the following result:
\begin{proposition}
\label{prop:UVequiv}
There is a categorical equivalence
\[F_{UV}: \mathbf{UBunC^*Alg}\leftrightarrows \mathbf{VBunC^*Alg}: G_{VU}.\]
\end{proposition}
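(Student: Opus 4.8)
The plan is to construct the two functors $F_{UV}$ and $G_{VU}$ explicitly and then exhibit natural isomorphisms $G_{VU}\circ F_{UV}\cong \mathrm{id}_{\mathbf{UBunC^*Alg}}$ and $F_{UV}\circ G_{VU}\cong\mathrm{id}_{\mathbf{VBunC^*Alg}}$. The key observation in both directions is that the fibers $\mathfrak{A}_\hbar$ and the evaluation maps $\phi_\hbar$ are to be kept \emph{unchanged}; only the distinguished C*-algebra of sections gets altered, from a $UC_b$-style algebra $\mathfrak{A}$ to a $C_0$-style algebra $\mathfrak{A}^0$, or vice versa. Concretely, given a $UC_b$ bundle $\mathcal{A}_I = ((\mathfrak{A}_\hbar,\phi_\hbar)_{\hbar\in I},\mathfrak{A})$, I would define $\mathfrak{A}^0$ to be the norm-closure inside $\prod_{\hbar\in I}\mathfrak{A}_\hbar$ of the set $\{\, fa : f\in C_0(I),\ a\in\mathfrak{A}\,\}$ (using the multiplication $fa$ furnished by uniform completeness), with the same restricted evaluation maps; one checks this satisfies fullness, vanishing completeness, and vanishing continuity. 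In the reverse direction, given a $C_0$ bundle $((\mathfrak{A}_\hbar,\phi_\hbar)_{\hbar\in I},\mathfrak{A}^0)$, I would define $\mathfrak{A}$ to be the set of all sections $\gamma\in\prod_{\hbar\in I}\mathfrak{A}_\hbar$ that are bounded, "uniformly continuous" in the appropriate sense (i.e.\ $N_\gamma\in UC_b(I)$ together with a local-uniformity condition coming from approximation by elements of $\mathfrak{A}^0$), and that differ from elements of $\mathfrak{A}^0$ suitably; more carefully, $\mathfrak{A}$ should be the multiplier-type algebra $\{\gamma : \gamma\cdot\mathfrak{A}^0\subseteq\mathfrak{A}^0 \text{ and } N_\gamma\in UC_b(I)\}$, analogous to how $UC_b(I)$ sits as multipliers of $C_0(I)$ with uniformly continuous norm-function. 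I would then verify the three $UC_b$-bundle axioms.

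On morphisms, both functors act as the identity on the metric-map component $\alpha$ and on the fibered data; the point to check is that a bundle homomorphism $\sigma=(\alpha,\beta)$ with $\beta:\mathfrak{A}\to\mathfrak{B}$ restricts/extends to a $*$-homomorphism between the newly-chosen section algebras compatibly with the evaluation maps. For $F_{UV}$ this uses that $\beta$ intertwines the $UC_b(I)$-actions (pulled back along $\alpha$), hence maps the $C_0$-generated subalgebra into the corresponding one; for $G_{VU}$ it uses Lemma~\ref{lem:fibmorph}, which produces fiberwise $*$-homomorphisms $\sigma_\hbar:\mathfrak{A}_\hbar\to\mathfrak{B}_{\alpha(\hbar)}$, and these assemble into a section-level map on the multiplier algebra. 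Functoriality (preservation of identities and composition) is then immediate since the constructions are identity-on-fibers.

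For the equivalence itself, I would show that starting from a $UC_b$ bundle, passing to its $C_0$ bundle, and passing back recovers an isomorphic $UC_b$ bundle: the fibers and evaluation maps are literally unchanged, so the only content is that the multiplier algebra of the $C_0$-section algebra, intersected with the uniform-continuity condition, is naturally $*$-isomorphic to the original $\mathfrak{A}$ — and the natural isomorphism is realized fiberwise by the identity, so naturality in $\sigma$ is automatic. The analogous round-trip in the other order is similar. I expect the main obstacle to be pinning down exactly the right definition of the section algebra $\mathfrak{A}$ recovered from a $C_0$ bundle so that it is \emph{canonically} determined (this is what makes the round-trip an isomorphism and not merely an equivalence on objects), and verifying that the "local uniform continuity" encoded by the $UC_b(I)$-module structure is faithfully captured by the $C_0(I)$-module structure together with the scalar norm-functions $N_a$; the technical heart is a density/approximation argument showing every uniformly continuous section is a uniform limit of $C_0(I)$-combinations of a fixed generating family, paralleling Lemma~3.10–3.11 of \cite{AlBo99} in the commutative case. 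Once that is in place, the two natural isomorphisms and the triangle identities follow routinely.
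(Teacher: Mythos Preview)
Your approach differs substantially from the paper's. Rather than constructing $F_{UV}$ and $G_{VU}$ directly, the paper factors through an intermediary category $\mathbf{FieldC^*Alg}$ of Dixmier-style continuous fields of C*-algebras, in which the section space $\Gamma\subseteq\prod_\hbar\mathfrak{A}_\hbar$ is merely a $*$-algebra closed under locally uniform limits. It proves two separate equivalences, $\mathbf{FieldC^*Alg}\simeq\mathbf{VBunC^*Alg}$ and $\mathbf{FieldC^*Alg}\simeq\mathbf{UBunC^*Alg}$, in each case sending a bundle to the locally uniform ($\tau_{\mathit{lu}}$) completion of its section algebra and, conversely, restricting a field $\Gamma$ to those $a$ with $N_a\in C_0(I)$ (resp.\ $N_a\in \mathit{UC}_b(I)$). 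The desired equivalence is then the composite $F_{UV}=F_{FV}\circ G_{UF}$, $G_{VU}=F_{FU}\circ G_{VF}$. So the paper never touches multiplier algebras; the ambient object is always the full continuous field, and both $C_0$- and $\mathit{UC}_b$-bundles sit inside it.

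Your direct construction of $G_{VU}$ has a concrete gap: the proposed section algebra $\{\gamma:\gamma\cdot\mathfrak{A}^0\subseteq\mathfrak{A}^0\ \text{and}\ N_\gamma\in \mathit{UC}_b(I)\}$ is not closed under addition, hence not a C*-algebra. Take the trivial bundle over $I=(0,1]$ with fibers $\mathbb{C}$ and $\mathfrak{A}^0=C_0((0,1])$. Then $\gamma(\hbar)=e^{i/\hbar}$ and $\bar\gamma(\hbar)=e^{-i/\hbar}$ both multiply $C_0((0,1])$ into itself and have $N_\gamma=N_{\bar\gamma}\equiv 1\in \mathit{UC}_b$, yet $\gamma+\bar\gamma=2\cos(1/\hbar)$ has $N_{\gamma+\bar\gamma}=2\lvert\cos(1/\hbar)\rvert\notin \mathit{UC}_b((0,1])$. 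The underlying reason is that, unlike the $C_0$ case (where $N_{a+b}\leq N_a+N_b$ together with continuity forces $N_{a+b}\in C_0$), uniform continuity of $N_a$ and $N_b$ does not propagate to $N_{a+b}$: the scalar norm function sees nothing of the section's ``phase'' across fibers. You rightly flag that isolating the correct $\mathfrak{A}$ is the crux, but the multiplier-with-$\mathit{UC}_b$-norm recipe does not resolve it; any repair needs a genuinely fiber-relational notion of uniform continuity, and pushing your approximation heuristic through essentially rebuilds the $\tau_{\mathit{lu}}$-completion machinery the paper invokes. (The paper's own $\Gamma^U_\mathfrak{A}$ is stated via the same bare condition $N_a\in \mathit{UC}_b$ and is exposed to the same counterexample, so the difficulty is intrinsic rather than an artifact of your route.)
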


\noindent 
\noindent This shows the intended sense in which the definition of $C_0$ bundles of C*-algebras (Definition \ref{def:vbun}, which appears in the literature) is equivalent to our definition of $\mathit{UC}_b$ bundles of C*-algebras (Definition \ref{def:ubun}).  Although the models satisfying each definition are distinct, there is a correspondence between the structure-preserving maps in the relevant categories.  In this sense, our definition captures the same mathematical structure as the existing definition in the literature.

Roughly, the construction is as follows. $F_{UV}$ takes each object and morphism in the category $\mathbf{UBunC^*Alg}$ and restricts it to the appropriate subcollection of sections whose norm vanishes at infinity.  For the reverse construction, we note that a collection of vanishingly continuous sections is dense in a collection of uniformly continuous sections in the locally uniform topology.  $G_{VU}$ extends each object in $\mathbf{VBunC^*Alg}$ by taking the completion of the collection of vanishingly continuous sections in this topology and then restricting to the subcollection of sections whose norm are uniformly continuous and bounded.  Similarly, $G_{VU}$ extends morphisms continuously to the completion of the collection of sections and then restricts them.  Proposition \ref{prop:UVequiv} establishes that these constructions are natural and preserve structure.   This justifies us in working exclusively with $\mathit{UC}_b$ bundles of C*-algebras, as we do throughout both this paper and the sequel.

We now proceed to the proof establishing a categorical equivalence between $\mathbf{VBunC^*Alg}$ and $\mathbf{UBunC^*Alg}$.  Recall that a functor $F:\mathbf{C}\to \mathbf{D}$ is a categorical equivalence iff there is a functor $G:\mathbf{D}\to\mathbf{C}$ that is ``almost inverse" to $F$ in the sense that for each object $X$ in $\mathbf{C}$, there is an isomorphism $\eta_X$ from $G\circ F(X)$ to $X$ in $\mathbf{C}$ that makes the following diagram commute for all morhpisms $f:X\to Y$ in $\mathbf{C}$:
\begin{equation*}
\begin{tikzcd}
G\circ F(X) \arrow{r}{\eta_X} \arrow{d}[swap]{G\circ F(f)}
& X\arrow{d}{f} \\
G\circ F(Y) \arrow{r}{\eta_Y}
& Y
\end{tikzcd}
\end{equation*}
(And vice versa for $F\circ G$ and objects in $\mathbf{D}$.) In this case, $\eta$ is a \emph{natural isomorphism} between $G\circ F$ and the identity functor $1_{\mathbf{C}}$.  One can characterize or establish a categorical equivalence by providing this ``almost inverse'' functor \cite[Proposition 7.26, p. 173]{Aw10}.

In the case of $\mathbf{VBunC^*Alg}$ and $\mathbf{UBunC^*Alg}$, we will provide a pair of almost inverse functors through an intermediary category.  This intermediary category corresponds to a further definition of a continuous bundle-like structure of C*-algebras that is often used in the mathematical literature, which we prove to be equivalent along the way.

\begin{definition}[\cite{Di77}]
\label{def:contfield}
A \emph{continuous field of C*-algebras} over a (now arbitrary) topological space $I$ is a family of C*-algebras $(\mathfrak{A}_\hbar)_{\hbar\in I}$ and a subset $\Gamma\subseteq \prod_{\hbar\in I}\mathfrak{A}_\hbar$.  In other words, each element $a\in \Gamma$ is a map $a:I\mapsto \coprod_{\hbar\in I}\mathfrak{A}_\hbar$ such that $a(\hbar)\in\mathfrak{A}_\hbar$.  We call $\Gamma$ the collection of \emph{continuous sections}, we require it to be a *-algebra under pointwise operations, and we require it to satisfy the following conditions:
\begin{enumerate}[(i)]
\item \emph{Density.} For each $\hbar\in I$, the set $\{a(\hbar)\ |\ a\in \Gamma\}$ is dense in $\mathfrak{A}_\hbar$.
\item \emph{Locally uniform closure.} If $a\in \prod_{\hbar\in I}\mathfrak{A}_\hbar$ and for each $\hbar\in I$ and $\epsilon >0$, there is some $a'\in \Gamma$ such that $\sup_{\hbar'\in U}\norm{a(\hbar') - a'(\hbar')}_{\hbar'}\leq \epsilon$ for some neighborhood $U$ of $\hbar$, then $a\in \Gamma$.
\item \emph{Continuity.} For each $a\in \Gamma$, the map $N_a: \hbar\mapsto \norm{a(\hbar)}_\hbar$ is continuous.
\end{enumerate}
\end{definition}

In Definition \ref{def:contfield}, note that although the collection $\Gamma$ of continuous sections is a *-algebra, it is not in general a C*-algebra in a natural way.  In general, elements of $\Gamma$ may be unbounded in the supremum norm.  One can see this, for example, because the condition of locally uniform closure implies
\begin{enumerate}[(ii*)]
\item \emph{Completeness.} For any continuous $f:I\to\mathbb{C}$ and $a\in\Gamma$, there is an element $fa\in\Gamma$ such that $fa(\hbar) = f(\hbar)a(\hbar)$.
\end{enumerate}
Since there may be unbounded continuous functions on $I$, this implies that there may be unbounded continuous sections.  However, it is known that one can associate with each continuous field of C*-algebras $((\mathfrak{A}_\hbar)_{\hbar\in I},\Gamma)$ a canonical C*-algebra
\[\Gamma^0_\mathfrak{A} := \left\{a\in \Gamma\ \middle |\ N_a\in C_0(I) \right\},\]
which consists in continuous sections whose norm vanishes at infinity, with pointwise operations.  Similarly, one can associate with each continuous field of C*-algebras $((\mathfrak{A}_\hbar)_{\hbar\in I},\Gamma)$ over a metric space $I$ a canonical C*-algebra
\[\Gamma^U_\mathfrak{A} := \left \{a\in\Gamma\ \middle|\ N_a\in \mathit{UC}_b(I)\right\},\]
which consists in continuous sections whose norm is uniformly continuous and bounded, again with pointwise operations.  We now aim to understand the construction of canonical $C_0$ and $\mathit{UC}_b$ bundles associated with $\Gamma^0_\mathfrak{A}$ and $\Gamma^U_\mathfrak{A}$ as a functor from the category of continuous fields of C*-algebras, which we define as follows.

\begin{definition}
The category $\mathbf{FieldC^*Alg}$ consists in
\begin{itemize}
    \item \textit{objects}: continuous fields of C*-algebras whose base space is a locally compact metric space,
    \[\mathcal{A}^F_I = \left ( \left (\mathfrak{A}_\hbar \right)_{\hbar\in I},\Gamma_\mathfrak{A} \right);\]
    \item \textit{morphisms}: homomorphisms $\sigma: \mathcal{A}^F_I\to\mathcal{B}^F_J$, where each $\sigma$ is a pair of maps
    \[\sigma = (\alpha,\beta),\]
    where $\alpha: I\to J$ is a metric map, $\beta: \Gamma_\mathfrak{A}\to\Gamma_\mathfrak{B}$ is a *-homomorphism, and for all $a_1,a_2\in \Gamma_\mathfrak{A}$ and $\hbar\in I$, if
        $a_1(\hbar) = a_2(\hbar)$, then  
        $\beta(a_1)(\alpha(\hbar)) = \beta(a_2)(\alpha(\hbar))$.
\end{itemize}
\end{definition}

We define a pair of functors relating $\mathbf{FieldC^*Alg}$ and $\mathbf{VBunC^*Alg}$ that captures the construction described by \cite[p. 677-8]{KiWa95} and we show that this construction indeed provides a categorical equivalence.  We then use an exactly analogous construction to establish a categorical equivalence between $\mathbf{FieldC^*Alg}$ and $\mathbf{UBunC^*Alg}$.  First, note that there is a difference between continuous fields of C*-algebras and (vanishingly or uniformly) continuous bundles of C*-algebras that is somewhat trivial: in a continuous field, the sections are directly specified as functions from the base up to the fibers, whereas in a bundle, the sections are given by an ``independent" C*-algebra that is connected to the fibers via the evaluation maps.  To aid in the translation, if we are given a $C_0$ bundle of C*-algebras $((\mathfrak{A}_\hbar,\phi_\hbar^I)_{\hbar\in I},\mathfrak{A}^0)$, we define
\[\hat{\mathfrak{A}}^0 := \left\{\gamma_a\in \prod_{\hbar\in I}\mathfrak{A}_\hbar\ \middle |\ \gamma_a(\hbar) = \phi_\hbar(a) \text{ for some } a\in\mathfrak{A}^0 \right\}.\]
Similarly, if we are given a $\mathit{UC}_b$ bundle of C*-algebras $((\mathfrak{A}_\hbar,\phi_\hbar^I)_{\hbar\in I},\mathfrak{A})$, we define
\[\hat{\mathfrak{A}} := \left \{\gamma_a\in \prod_{\hbar\in I}\mathfrak{A}_\hbar\ \middle |\ \gamma_a(\hbar) = \phi_\hbar(a) \text{ for some } a\in\mathfrak{A}\right\}.\]

Now we can define a functor $F_{FV}: \mathbf{FieldC^*Alg}\to\mathbf{VBunC^*Alg}$ on objects and morphisms by
\begin{align*}
    \mathcal{A}^F_I = \left ( \left(\mathfrak{A}_\hbar \right)_{\hbar\in I},\Gamma_{\mathfrak{A}}\right) \, &\mapsto \, \mathcal{A}^V_I := \left( \left(\mathfrak{A}_\hbar,\phi^I_\hbar\right)_{\hbar\in I},\Gamma_\mathfrak{A}^0\right) \\
    (\alpha,\beta) \, &\mapsto \, \left(\alpha,\beta_{|\Gamma^0_\mathfrak{A}}\right),
\end{align*}
where $\phi_\hbar^I (a) := a(\hbar)$ for all $a\in\Gamma_\mathfrak{A}^0$ and $\hbar\in I$. One can easily check this induces a map from $\mathrm{Hom}_\mathbf{FieldC^*Alg}(\mathcal{A}_I^F,\mathcal{B}_J^F)$ to $\mathrm{Hom}_\mathbf{VBunC^*Alg}(\mathcal{A}_I^V,\mathcal{B}_J^V)$, i.e., $\beta(a)$ has vanishingly continuous norm for any $a$ with vanishingly continuous norm.  This follows from Lemma \ref{lem:fibmorph}.

Further, define a functor $G_{VF}: \mathbf{VBunC^*Alg}\to\mathbf{FieldC^*Alg}$ on objects and morphisms by
\begin{align*}
    \mathcal{A}_I^V = \left(\left(\mathfrak{A}_\hbar,\phi_\hbar^I \right)_{\hbar\in I},\mathfrak{A}^0\right) \, &\mapsto \, \mathcal{A}_I^F:= \left(\left(\mathfrak{A}_\hbar\right)_{\hbar\in I},\mathfrak{A}_\Gamma\right) \\
    (\alpha,\beta) \, &\mapsto \, \left(\alpha,\beta_\Gamma \right),
\end{align*}
where $\mathfrak{A}_\Gamma$ is defined as the completion of $\hat{\mathfrak{A}}^0$ in the topology $\tau_{\mathit{lu}}$ of locally uniform convergence, i.e.,
\begin{equation*}
    \mathfrak{A}_\Gamma:= \hspace*{-.5 em}\oset[-.16em]{\hspace*{.75em}\line(1,0){14}_{\tau_{\mathit{lu}}}}{\hat{\mathfrak{A}}^0}\hspace*{-.3em}.
\end{equation*}
More explicitly, $\tau_{\mathit{lu}}$ is the vector space topology that has a neighborhood base at $0$ consisting in sets of the form 
\[O(K,\epsilon):=\left\{\gamma_a\in \hat{\mathfrak{A}}^0\ \middle|\ \sup_{\hbar\in K}\left \Vert\gamma_a(\hbar)\right\Vert_\hbar <\epsilon \right\}\]
for each $\epsilon>0$ and compact $K\subseteq I$.  Elements of $\mathfrak{A}_\Gamma$ are equivalence classes of $\tau_{\mathit{lu}}$-Cauchy nets with pointwise operations and limiting norm \cite[Ex. 4.1]{BaFrInTr06}. Similarly,
$$
\beta_\Gamma: = \hspace*{-.5em}\oset[-.16em]{\hspace*{1em}\line(1,0){7}_{\tau_{\mathit{lu}}}}{\hat{\beta}},
$$
where $\hat{\beta}(\gamma_a) = \gamma_{\beta(a)}$ for each $a\in\mathfrak{A}^0$, and the overline denotes the unique linear, continuous (relative to $\tau_{lu}$) extension of the map $\hat{\beta}$ to the completion $\mathfrak{A}_\Gamma$.  We have the following result.

\begin{lemma}
\label{lem:FVequiv}
$F_{FV}:\mathbf{FieldC^*Alg}\leftrightarrows \mathbf{VBunC^*Alg}:G_{VF}$ is a categorical equivalence.
\end{lemma}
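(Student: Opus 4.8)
The plan is to apply the criterion recalled just above (see \cite[Prop.~7.26]{Aw10}): since both $F_{FV}$ and $G_{VF}$ are already given on objects and morphisms, it suffices to check that they are genuinely functors and then to exhibit natural isomorphisms $\eta\colon G_{VF}\circ F_{FV}\Rightarrow 1_{\mathbf{FieldC^*Alg}}$ and $\vartheta\colon F_{FV}\circ G_{VF}\Rightarrow 1_{\mathbf{VBunC^*Alg}}$. In each composite the map on base spaces is the identity, so all the content sits at the level of section algebras. In both directions the comparison map will be a $\tau_{\mathit{lu}}$-continuous (indeed isometric, once restricted appropriately) *-isomorphism of section algebras that is literally the identity on a $\tau_{\mathit{lu}}$-dense *-subalgebra; consequently the naturality squares, whose two legs are $\tau_{\mathit{lu}}$-continuous and agree on that dense subalgebra, commute automatically. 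Thus the real work is (a) pinning down the dense subalgebras and (b) the analytic bookkeeping needed to see that everything lands in the right category and is $\tau_{\mathit{lu}}$-continuous.

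\textbf{The isomorphism $\eta$.} Given a continuous field $\mathcal{A}^F_I=((\mathfrak{A}_\hbar)_{\hbar\in I},\Gamma_{\mathfrak A})$, the composite $G_{VF}\circ F_{FV}$ returns the field whose sections are the $\tau_{\mathit{lu}}$-completion of $\Gamma^0_{\mathfrak A}$. I claim this completion is canonically $\Gamma_{\mathfrak A}$ itself. First, $\Gamma_{\mathfrak A}$ is $\tau_{\mathit{lu}}$-complete: a $\tau_{\mathit{lu}}$-Cauchy net in $\Gamma_{\mathfrak A}$ converges pointwise (each $\mathfrak{A}_\hbar$ is complete), and its pointwise limit lies in $\Gamma_{\mathfrak A}$ by the Locally uniform closure axiom; the assignment of this pointwise limit is injective on equivalence classes of Cauchy nets and reproduces the limiting norm. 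Second, $\Gamma^0_{\mathfrak A}$ is $\tau_{\mathit{lu}}$-dense in $\Gamma_{\mathfrak A}$, and here local compactness of $I$ is essential: given $a\in\Gamma_{\mathfrak A}$, a compact $K\subseteq I$, and $\epsilon>0$, pick $f\in C_c(I)$ with $0\le f\le 1$ and $f\equiv1$ on $K$; then $fa\in\Gamma_{\mathfrak A}$ by the Completeness consequence of the axioms, $N_{fa}=\abs{f}N_a$ has compact support and is continuous, so $fa\in\Gamma^0_{\mathfrak A}$, while $\sup_{\hbar\in K}\norm{a(\hbar)-(fa)(\hbar)}_\hbar=0$. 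Hence $\eta_{\mathcal{A}^F_I}:=(\mathrm{id}_I,\text{the canonical identification})$ is an isomorphism in $\mathbf{FieldC^*Alg}$, and it is natural in $\sigma=(\alpha,\beta)$ because on the dense subalgebra $\Gamma^0_{\mathfrak A}$ both legs of the square restrict to $\beta|_{\Gamma^0_{\mathfrak A}}$.

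\textbf{The isomorphism $\vartheta$.} Given a $C_0$ bundle $\mathcal{A}^V_I=((\mathfrak{A}_\hbar,\phi^I_\hbar)_{\hbar\in I},\mathfrak{A}^0)$, the composite $F_{FV}\circ G_{VF}$ returns the bundle with section algebra $(\mathfrak{A}_\Gamma)^0=\{\gamma\in\mathfrak{A}_\Gamma\mid N_\gamma\in C_0(I)\}$. I claim the canonical map $a\mapsto\gamma_a$ (isometric onto its image $\hat{\mathfrak A}^0$ by Fullness) identifies $\mathfrak{A}^0$ with $(\mathfrak{A}_\Gamma)^0$. The inclusion $\hat{\mathfrak A}^0\subseteq(\mathfrak{A}_\Gamma)^0$ is Vanishing continuity. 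Conversely, if $\gamma\in\mathfrak{A}_\Gamma$ with $N_\gamma\in C_0(I)$, then for $\epsilon>0$ the set $K:=\{N_\gamma\ge\epsilon\}$ is compact; choose $f\in C_c(I)$ with $0\le f\le1$, $f\equiv1$ on $K$, and (by $\tau_{\mathit{lu}}$-density) choose $\gamma_a\in\hat{\mathfrak A}^0$ with $\sup_{\hbar\in\operatorname{supp}f}\norm{\gamma(\hbar)-\gamma_a(\hbar)}_\hbar<\epsilon$; using Vanishing completeness, $fa\in\mathfrak{A}^0$, and splitting $I$ into $K$, $\operatorname{supp}f\setminus K$ (where $N_\gamma<\epsilon$), and its complement (where $f=0$) gives $\sup_{\hbar\in I}\norm{\gamma(\hbar)-\gamma_{fa}(\hbar)}_\hbar\le 2\epsilon$. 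Thus $\gamma$ lies in the supremum-norm closure of $\hat{\mathfrak A}^0$ in $\prod_\hbar\mathfrak{A}_\hbar$; but $\hat{\mathfrak A}^0$ is the isometric image of the complete algebra $\mathfrak{A}^0$, hence already norm-closed, so $\gamma=\gamma_a$ for some $a\in\mathfrak{A}^0$. Therefore $(\mathfrak{A}_\Gamma)^0=\hat{\mathfrak A}^0$ and $\vartheta_{\mathcal{A}^V_I}:=(\mathrm{id}_I,\ \hat{(\,\cdot\,)}^{-1})$ is an isomorphism in $\mathbf{VBunC^*Alg}$, natural by the same dense-agreement argument; conclude via the equivalence criterion.

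\textbf{Main obstacle and remark.} I expect the categorical skeleton above to be routine and the two analytic points to be where care is needed: (i) verifying in full that $G_{VF}$ applied to a bundle really is an object of $\mathbf{FieldC^*Alg}$ — that is, matching the abstract $\tau_{\mathit{lu}}$-completion $\mathfrak{A}_\Gamma$ with a concretely $\tau_{\mathit{lu}}$-closed *-subalgebra of $\prod_\hbar\mathfrak{A}_\hbar$ satisfying Density, Locally uniform closure, and Continuity, being careful about nets versus the neighborhood filter defining $\tau_{\mathit{lu}}$; and (ii) confirming that the section-level maps $\beta$ (for field morphisms) and their extensions $\beta_\Gamma$ are $\tau_{\mathit{lu}}$-continuous and well defined, which is what makes the naturality squares reduce to agreement on a dense subalgebra — for this one uses that each induced fiber map from Lemma \ref{lem:fibmorph} is a *-homomorphism of C*-algebras, hence contractive, together with Fullness and Continuity of section-norms. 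Finally, the parallel equivalence with $\mathbf{UBunC^*Alg}$ is obtained by running the identical argument with $C_0(I)$ and compactly supported cutoffs replaced by $\mathit{UC}_b(I)$ and the uniformly continuous extension theorem \cite{AlBo99} throughout.
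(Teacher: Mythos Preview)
Your proposal is correct and follows the same overall strategy as the paper: exhibit natural isomorphisms $G_{VF}\circ F_{FV}\cong 1_{\mathbf{FieldC^*Alg}}$ and $F_{FV}\circ G_{VF}\cong 1_{\mathbf{VBunC^*Alg}}$ by identifying the $\tau_{\mathit{lu}}$-completion of $\Gamma^0_{\mathfrak A}$ (resp.\ $\hat{\mathfrak A}^0$) with $\Gamma_{\mathfrak A}$ (resp.\ $\mathfrak A^0$), and then observe that naturality is automatic from agreement on a $\tau_{\mathit{lu}}$-dense subalgebra.

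The one substantive difference is in the density step. You approximate $a\in\Gamma_{\mathfrak A}$ on a compact $K$ by $fa$ with $f\in C_c(I)$, $f\equiv 1$ on $K$, which lands in $\Gamma^0_{\mathfrak A}$ and agrees with $a$ \emph{exactly} on $K$; this is the cleanest possible argument and uses only Urysohn's lemma for locally compact Hausdorff spaces. The paper instead invokes the Tietze extension theorem to produce $f^K\in C_0(I)$ with $f^K|_K=N_a|_K$, then sets $a_{K,\epsilon}=\big(f^K/(N_a+\epsilon)\big)\,a$ and checks $\sup_K\norm{a-a_{K,\epsilon}}<\epsilon$. Both work, but your cutoff argument is more elementary and avoids the auxiliary $\epsilon$-parameter entirely. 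On the $\vartheta$ side you also do a bit more than the paper: you prove the concrete equality $(\mathfrak A_\Gamma)^0=\hat{\mathfrak A}^0$ via a supremum-norm approximation with compactly supported cutoffs, whereas the paper treats this direction more tersely. Your flagged caveats (that $\mathfrak A_\Gamma$ must be realized inside $\prod_\hbar\mathfrak A_\hbar$ and that the fiberwise contractivity from Lemma~\ref{lem:fibmorph} underwrites $\tau_{\mathit{lu}}$-continuity of $\beta_\Gamma$) are exactly the points the paper leaves implicit.
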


\begin{proof}
We specify directly two natural isomorphisms $\chi: G_{VF}\circ F_{FV}\to 1_F$, where and $1_F$ is the identity functor on $\mathbf{FieldC^*Alg}$, and $\eta: F_{FV}\circ G_{VF}\to 1_V$, where $1_V$ is the identity functor on $\mathbf{VBunC^*Alg}$.

First, for each object $\mathcal{A}_I^F = ((\mathfrak{A}_\hbar)_{\hbar\in I},\Gamma_\mathfrak{A})$ in $\mathbf{FieldC^*Alg}$, we define
$$
\chi_\mathcal{A}: G_{VF}\circ F_{FV} \left (\mathcal{A}_I^F \right)\to \mathcal{A}_I^F
$$
by $\chi_\mathcal{A} := (\mathrm{id}_I,\beta_\mathcal{A})$, where $\mathrm{id}_I$ is the identity map on $I$ so it only remains to specify the map
$$
\beta_\mathcal{A}:  \hspace*{-.4em}\oset[-.16em]{\hspace*{.75em}\line(1,0){14}_{\tau_{\mathit{lu}}}}{\hat{\Gamma}_\mathfrak{A}^0}\to \Gamma_\mathfrak{A}.
$$
We define $\beta_\mathcal{A}$ for each $[a_\delta]\in   \hspace*{-.4em}\oset[-.16em]{\hspace*{.75em}\line(1,0){14}_{\tau_{\mathit{lu}}}}{\hat{\Gamma}_\mathfrak{A}^0}$ by
\[\beta_\mathcal{A}([a_\delta]) = \overset{\tau_{\mathit{lu}}}{\lim_\delta}\ a_\delta ,\]
where $[a_\delta]$ is the equivalence class of nets $b_\delta$ such that $(a_\delta - b_\delta)\to 0$ in $\tau_{\mathit{lu}}$.  We need to show that $\beta_\mathcal{A}$ is a *-isomorphism.  First, note that $\beta_\mathcal{A}$ is well-defined because if $[a_\delta] = [b_\delta]$, then the limits $\lim_{\delta} a_\delta = \lim_\delta b_\delta$ exist and are identical (where these limits are taken relative to the topology $\tau_{\mathit{lu}}$).  Similarly, $\beta_\mathcal{A}$ is injective because if $\lim_\delta a_\delta = \lim_\delta b_\delta$, then $[a_\delta] = [b_\delta]$.

Further, $\beta_\mathcal{A}$ is surjective: consider any $a\in\Gamma_\mathfrak{A}$.  We will construct a net in $\hat{\Gamma}_\mathfrak{A}^0$ converging to $a$ with respect to $\tau_{\mathit{lu}}$.  Our net will be indexed by the directed set of pairs $(K,\epsilon)$, where $K$ is a compact subset of $I$ and $\epsilon>0$ with $(K,\epsilon)\preceq (K',\epsilon')$ iff $K\subseteq K'$ and $\epsilon'\leq \epsilon$.  For each compact subset $K\subseteq I$ and $\epsilon >0$, we consider the function $N_{a|K}: \hbar\in K\mapsto \norm{a(\hbar)}_\hbar$.  The Tietze extension theorem \cite[Theorem 35.1, p. 219]{Mu00} implies there is a $f^K\in C_0(I)$ such that $f^K_{|K} = N_{a|K}$.  Now define $g_{K,\epsilon}: I\to \mathbb{R}$ by
\[g_{K,\epsilon}(\hbar):= \frac{f^K(\hbar)}{N_a(\hbar) + \epsilon}\]
for all $\hbar\in I$.  Since $g_{K,\epsilon}$ is continuous, $a_{K,\epsilon}:= g_{K,\epsilon}a\in \Gamma_\mathfrak{A}$.  Then for each $\hbar\in I$,
\[\norm{a_{K,\epsilon}(\hbar)}_\hbar = \abs{g_{K,\epsilon}(\hbar)}\cdot\norm{a(\hbar)}_\hbar = \abs{f^K(\hbar)}\cdot \frac{N_a(\hbar)}{N_a(\hbar) + \epsilon}\leq \abs{f^K(\hbar)},\]
and hence, $N_{a_{K,\epsilon}}\in C_0(I)$.  Further,
\begin{align*}
\sup_{\hbar\in K}\ \norm{(a-a_{K,\epsilon})(\hbar)}_\hbar &= \sup_{\hbar\in K}\ \norm{(1-g_{K,\epsilon})(a(\hbar))}_\hbar\\
&= \sup_{\hbar\in K}\ \abs{1-g_{K,\epsilon}(\hbar)}\cdot N_a(\hbar) \leq \epsilon\cdot  \frac{N_a(\hbar)}{N_a(\hbar)+\epsilon} < \epsilon.
\end{align*}
Hence, for each compact $K\subseteq I$ and each $\epsilon >0$, for all pairs $(K',\epsilon')\succeq (K,\epsilon)$, we have
\[\sup_{\hbar\in K}\ \norm{(a - a_{K',\epsilon'})(\hbar)}_\hbar\leq \sup_{\hbar\in K'}\ \norm{(a-a_{K',\epsilon'})(\hbar)}_\hbar < \epsilon' \leq \epsilon,\]
which implies $\beta_\mathcal{A}([a_{K,\epsilon}]) = a$.  Hence, $\beta_\mathcal{A}$ is surjective.

Finally, $\beta_\mathcal{A}$ is linear, multiplicative, and *-preserving because operations are defined pointwise on the completion of $\hat{\Gamma}_\mathfrak{A}^0$ with respect to $\tau_{\mathit{lu}}$. Hence, we have established that $\chi_\mathcal{A} = (\mathrm{id}_I,\beta_\mathcal{A})$ is an isomorphism in $\mathbf{FieldC^*Alg}$.

Moreover, clearly the following diagram commutes for each arrow $\sigma: \mathcal{A}_I^F\to\mathcal{B}_J^F$ in $\mathbf{FieldC^*Alg}$:
\begin{equation*}
\begin{tikzcd}
G_{VF}\circ F_{FV} \left(\mathcal{A}_I^F \right) \arrow{rr}{\chi_\mathcal{A}} \arrow{dd}[swap]{G_{VF}\circ F_{FV}(\sigma)}
& & 1_F\left(\mathcal{A}_I^F \right)\arrow{dd}{1_F(\sigma)} \\
& & \\
G_{VF}\circ F_{FV}\left(\mathcal{B}_J^F\right) \arrow{rr}{\chi_\mathcal{B}}
& & 1_F\left(\mathcal{B}_J^F\right)
\end{tikzcd}
\end{equation*}
\noindent
\noindent
Hence, $\chi$ is a natural isomorphism.

Conversely, for each object $\mathcal{A}_I^V = ((\mathfrak{A}_\hbar,\phi^I_\hbar)_{\hbar\in I},\mathfrak{A}^0)$ in $\mathbf{VBunC^*Alg}$, we define $\eta_\mathcal{A}: F_{FV}\circ G_{VF}(\mathcal{A}_I^V)\to 1_V(\mathcal{A}_I^V)$ by $\eta_\mathcal{A} = (\mathrm{id}_I,\beta_\mathcal{A})$, so that it only remains to specify the map $\beta_\mathcal{A}: (\mathfrak{A}_\Gamma)^0\to\mathfrak{A}^0$.  We define $\beta_\mathcal{A}$ for each $[a_\delta]\in (\mathfrak{A}_\Gamma)^0$ by
\[\beta_\mathcal{A}([a_\delta]) = \overset{\tau_{\mathit{lu}}}{\lim_\delta}\ a_\delta.\]
Note that $\beta_\mathcal{A}$ is well-defined because if $[a_\delta] = [b_\delta]$, then $\lim_\delta a_\delta = \lim_\delta b_\delta$.  Further, since $N_{[a_\delta]}\in C_0(I)$, it follows that for $a = \lim_\delta a_\delta$, $N_a\in C_0(I)$.  Of course, $\beta_\mathcal{A}$ is surjective because for each $a\in \mathfrak{A}^0$, the net $a_\delta = a$ for all $\delta$ is such that $\beta_\mathcal{A}([a_\delta]) = a$.  Further, $\beta_\mathcal{A}$ is injective because $\lim_\delta a_\delta = \lim_\delta b_\delta$ implies $[a_\delta] = [b_\delta]$.  Finally, $\beta_\mathcal{A}$ is linear, multiplicative, and *-preserving because addition and involution are $\tau_{\mathit{lu}}$-continuous and multiplication is jointly $\tau_{\mathit{lu}}$-continuous.  Hence, we have established that $(\mathrm{id}_I,\beta_\mathcal{A})$ is an isomorphism in $\mathbf{VBunC^*Alg}$.  Clearly, the following diagram commutes for each arrow $\sigma: \mathcal{A}_I^V\to\mathcal{B}_J^V$ in $\mathbf{VBunC^*Alg}$:
\begin{equation*}
\begin{tikzcd}
F_{FV}\circ G_{VF}\left ( \mathcal{A}_I^V \right ) \arrow{rr}{\eta_\mathcal{A}} \arrow{dd}[swap]{G_{VF}\circ F_{FV}(\sigma)}
& & 1_V \left (\mathcal{A}_I^V \right)\arrow{dd}{1_V(\sigma)} \\
& & \\
F_{FV}\circ G_{VF}\left ( \mathcal{B}_J^V \right) \arrow{rr}{\eta_\mathcal{B}}
& & 1_V \left(\mathcal{B}_J^V\right)
\end{tikzcd}
\end{equation*}
\noindent Hence, $\eta$ is a natural isomorphism.
\end{proof}

Next, we define a pair of functors relating $\mathbf{FieldC^*Alg}$ and $\mathbf{UBunC^*Alg}$ that provide a categorical equivalence.  We define $F_{FU}: \mathbf{FieldC^*Alg}\to \mathbf{UBunC^*Alg}$ on objects and morphisms by
\begin{align*}
    \mathcal{A}^F_I = \left(\left(\mathfrak{A}_\hbar\right)_{\hbar\in I},\Gamma_{\mathfrak{A}}\right) \, &\mapsto \, \mathcal{A}^U_I := \left(\left(\mathfrak{A}_\hbar,\phi^I_\hbar\right)_{\hbar\in I},\Gamma_\mathfrak{A}^U\right)\\
   (\alpha,\beta)\, &\mapsto \, \left (\alpha,\beta_{|\Gamma^U_\mathfrak{A}} \right ),
\end{align*}
where $\phi_\hbar (a) := a(\hbar)$ for all $a\in\mathfrak{A}$ and $\hbar\in I$. As previously, it follows from Lemma \ref{lem:fibmorph} below that this induces a map from $\mathrm{Hom}_\mathbf{FieldC^*Alg}(\mathcal{A}_I^F,\mathcal{B}_J^F)$ to $\mathrm{Hom}_\mathbf{UBunC^*Alg}(\mathcal{A}_I^U,\mathcal{B}_J^U)$.

Further, define a functor $G_{UF}: \mathbf{UBunC^*Alg}\to\mathbf{FieldC^*Alg}$ on objects and morphisms by
\begin{align*}
    \mathcal{A}_I^U = \left(\left(\mathfrak{A}_\hbar,\phi_\hbar^I\right)_{\hbar\in I},\mathfrak{A}\right) \, &\mapsto \, \mathcal{A}_I^F := \left(\left(\mathfrak{A}_\hbar\right)_{\hbar\in I},\hspace*{-.5em}\oset[-.16em]{\hspace*{.8em}\line(1,0){9}_{\tau_{\mathit{lu}}}}{\hat{\mathfrak{A}}}\hspace*{-.05em}\right)\\
    (\alpha,\beta) \, &\mapsto \, \left(\alpha, \hspace*{-.75em}\oset[-.16em]{\hspace*{1em}\line(1,0){7}_{\tau_{\mathit{lu}}}}{\hat{\beta}} \right),
\end{align*}
where $\hat{\beta}(\gamma_a) = \gamma_{\beta(a)}$ for each $a\in\mathfrak{A}$, and the overline denotes the unique linear, continuous extension of the map $\beta$ to the $\tau_{\mathit{lu}}$-completion of $\hat{\mathfrak{A}}$. Our next categorical equivalence follows then from the same argument used for Lemma \ref{lem:FVequiv}, given appropriate replacements of $V$ (or $0$) by $U$.

\begin{lemma}
$F_{FU}: \mathbf{FieldC^*Alg}\leftrightarrows\mathbf{UBunC^*Alg}: G_{UF}$ is a categorical equivalence.
\end{lemma}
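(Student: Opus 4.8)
The plan is to transplant the proof of Lemma~\ref{lem:FVequiv} almost word for word, systematically replacing $C_0(I)$ (and every superscript $0$) by $\mathit{UC}_b(I)$ (and superscript $U$). Concretely, I would construct two natural isomorphisms. The first is $\chi\colon G_{UF}\circ F_{FU}\to 1_{\mathbf{FieldC^*Alg}}$, whose component at a continuous field $\mathcal{A}^F_I = ((\mathfrak{A}_\hbar)_{\hbar\in I},\Gamma_\mathfrak{A})$ is $\chi_\mathcal{A} = (\mathrm{id}_I,\beta_\mathcal{A})$, with $\beta_\mathcal{A}$ sending the class $[a_\delta]$ of a $\tau_{\mathit{lu}}$-Cauchy net in $\hat{\Gamma}^U_\mathfrak{A}$ to its $\tau_{\mathit{lu}}$-limit in $\Gamma_\mathfrak{A}$. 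The second is $\eta\colon F_{FU}\circ G_{UF}\to 1_{\mathbf{UBunC^*Alg}}$, whose component at a $\mathit{UC}_b$ bundle $\mathcal{A}^U_I = ((\mathfrak{A}_\hbar,\phi^I_\hbar)_{\hbar\in I},\mathfrak{A})$ is $\eta_\mathcal{A} = (\mathrm{id}_I,\beta_\mathcal{A})$, with $\beta_\mathcal{A}$ sending $[a_\delta]$ in the $\mathit{UC}_b$-normed part $(\mathfrak{A}_\Gamma)^U$ of the $\tau_{\mathit{lu}}$-completion $\mathfrak{A}_\Gamma$ of $\hat{\mathfrak{A}}$ to $\lim_\delta a_\delta$. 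The functors $F_{FU}$ and $G_{UF}$ are already known to be well defined on morphisms via Lemma~\ref{lem:fibmorph}, and the two naturality squares commute for exactly the trivial reason they do in Lemma~\ref{lem:FVequiv} (each component is the identity on the base space and a pointwise-defined map on sections), so the entire task reduces to verifying that each $\beta_\mathcal{A}$ above is a $*$-isomorphism.

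For both $\beta_\mathcal{A}$'s, well-definedness, injectivity, and the algebraic properties (linearity, multiplicativity, $*$-preservation) are inherited verbatim from Lemma~\ref{lem:FVequiv}, since operations on the $\tau_{\mathit{lu}}$-completion are computed pointwise and $\tau_{\mathit{lu}}$ is a vector-space topology with jointly continuous multiplication. Surjectivity in the $\eta$ case is immediate: for $a\in\mathfrak{A}$, the constant net $a_\delta := \gamma_a$ has norm function $N_{\gamma_a} = N_a$, which lies in $\mathit{UC}_b(I)$ by axiom~(iii) of a $\mathit{UC}_b$ bundle, so $[\gamma_a]\in(\mathfrak{A}_\Gamma)^U$ and $\beta_\mathcal{A}([\gamma_a]) = \gamma_a$. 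The genuinely new point is surjectivity of $\beta_\mathcal{A}$ in the $\chi$ case: I must show that every continuous section $a\in\Gamma_\mathfrak{A}$ is a $\tau_{\mathit{lu}}$-limit of sections whose norm functions lie in $\mathit{UC}_b(I)$. In place of the Tietze step of Lemma~\ref{lem:FVequiv} (which produced $f^K\in C_0(I)$ agreeing with $N_a$ on a compact $K$), I would fix, for each compact $K\subseteq I$, a Urysohn-type cutoff $h_K\in C_c(I)$ with $0\le h_K\le 1$ and $h_K = 1$ on $K$, set $a_K := h_K a\in\Gamma_\mathfrak{A}$ (completeness of $\Gamma_\mathfrak{A}$), and observe that $N_{a_K} = h_K N_a$ is continuous with compact support. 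The elementary fact that makes this work is that a continuous, compactly supported function on a metric space is automatically uniformly continuous and bounded, so $N_{a_K}\in C_c(I)\subseteq\mathit{UC}_b(I)$, hence $\gamma_{a_K}\in\hat{\Gamma}^U_\mathfrak{A}$; and since $a_K = a$ on $K$, the net $(\gamma_{a_K})$ indexed by the compact subsets of $I$ (directed by inclusion) is $\tau_{\mathit{lu}}$-Cauchy with $\tau_{\mathit{lu}}$-limit $a$. (One may equally retain the $g_{K,\epsilon}$-rescaling of Lemma~\ref{lem:FVequiv}; it too lands in $C_c(I)\subseteq\mathit{UC}_b(I)$.)

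The step I expect to demand the most care is the mirror-image issue underlying well-definedness of $\eta$, namely the identity $(\mathfrak{A}_\Gamma)^U = \hat{\mathfrak{A}}$: a $\tau_{\mathit{lu}}$-Cauchy net from $\hat{\mathfrak{A}}$ whose $\tau_{\mathit{lu}}$-limit has norm function in $\mathit{UC}_b(I)$ must have that limit back in $\hat{\mathfrak{A}}$. In the $C_0$ setting this is handled by cutting the limit down to compact support (on which $\tau_{\mathit{lu}}$-convergence becomes uniform convergence, so the cut-down section is a sup-norm limit of elements of $\hat{\mathfrak{A}}^0$, and the original is recovered as a sup-norm limit of such cut-downs). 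That reduction is not available for $\mathit{UC}_b$, since a $\mathit{UC}_b$ norm function need not vanish at infinity; here I would instead glue the local uniform approximants via a locally finite partition of unity subordinate to a precompact open cover of $I$, using the uniform boundedness of $N_a$ to keep the glued section inside $\mathfrak{A}$ and to upgrade the local estimates to a global one, and then conclude $a\in\overline{\hat{\mathfrak{A}}}^{\,\|\cdot\|_\infty} = \hat{\mathfrak{A}}$ by completeness of $\mathfrak{A}$. Granting this, each $\beta_\mathcal{A}$ is a $*$-isomorphism, $\chi$ and $\eta$ are natural isomorphisms, and $F_{FU}\colon \mathbf{FieldC^*Alg}\leftrightarrows\mathbf{UBunC^*Alg}\colon G_{UF}$ is a categorical equivalence.
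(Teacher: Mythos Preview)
Your proposal is correct and follows precisely the approach the paper takes: the paper's entire proof of this lemma is the single sentence ``the same argument used for Lemma~\ref{lem:FVequiv}, given appropriate replacements of $V$ (or $0$) by $U$,'' and you have carried out exactly those replacements, with the Urysohn-cutoff variant of the surjectivity argument being a clean way to ensure the approximating sections have norm in $C_c(I)\subseteq \mathit{UC}_b(I)$. Your identification of the step $(\mathfrak{A}_\Gamma)^U = \hat{\mathfrak{A}}$ as the one requiring the most care is apt (the paper glosses over the analogous point already in the $C_0$ case), and your partition-of-unity sketch is the right idea; the only minor slip is the parenthetical claim that the original $g_{K,\epsilon}$ construction lands in $C_c(I)$---it lands in $C_0(I)$, which need not be contained in $\mathit{UC}_b(I)$---but since you do not rely on that aside, it does not affect the argument.
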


Now we define $F_{UV} = F_{FV} \circ G_{UF}$ and $G_{VU} = F_{FU}\circ G_{VF}$.  That these functors provide an equivalence follows immediately from the preceding two lemmas.  Hence, this completes the proof of Proposition \ref{prop:UVequiv}.

\bibliographystyle{plain}
\bibliography{bibliography.bib}

\end{document}